\documentclass[review,hidelinks,onefignum,onetabnum]{siamart220329}

\usepackage{lipsum}
\usepackage{amsfonts}
\usepackage{graphicx}
\usepackage{epstopdf}
\usepackage{algorithmic}
\ifpdf
  \DeclareGraphicsExtensions{.eps,.pdf,.png,.jpg}
\else
  \DeclareGraphicsExtensions{.eps}
\fi

\newsiamremark{remark}{Remark}
\newsiamremark{hypothesis}{Hypothesis}
\crefname{hypothesis}{Hypothesis}{Hypotheses}
\newsiamthm{claim}{Claim}

\headers{Additive Schwarz Method For Eigenvalue Problems}{Qigang Liang and Xuejun Xu}

\title {A Two-Level Additive Schwarz Method for Computing Interior Multiple and Clustered Eigenvalues of Symmetric Elliptic Operators
\thanks{Submitted to the editors DATE.}}

\author {Qigang Liang\thanks{School of Mathematical Sciences, Tongji University, Shanghai 200092, China and Key Laboratory of Intelligent Computing and Applications (Tongji University), Ministry of Education (\email{qigang$\_$liang@tongji.edu.cn}, \email{xuxj@tongji.edu.cn}).}
\and Xuejun Xu\footnotemark[2]}

\usepackage{amsopn}

\usepackage{bm}
\usepackage{dcolumn}
\usepackage{stmaryrd}
\usepackage[title]{appendix}
\usepackage{mathrsfs}
\usepackage{tikz}
\usetikzlibrary{decorations.pathreplacing,calligraphy}

\newtheorem{example}[theorem]{Example}

\newtheorem{assumption}{Assumption}
\numberwithin{equation}{section}
\nolinenumbers
\begin{document}

\maketitle

\begin{abstract}
In this paper, we propose an efficient two-level additive Schwarz method for solving large-scale eigenvalue problems arising from the finite element discretization of symmetric elliptic operators, which may compute efficiently more interior multiple and clustered eigenvalues other than only the first several smallest eigenvalues. The proposed method is parallel in two ways: one is to solve the preconditioned Jacobi-Davidson correction equations by the two-level additive Schwarz preconditioner, the other is to solve different clusters of eigenvalues (see Figure \ref{Figure1_targeted_eigenvalues} in Introduction) simultaneously. It only requires computing a series of parallel subproblems and solving a small-dimensional eigenvalue problem per iteration for a cluster of eigenvalues. Based on some new estimates and tools, we provide a rigorous theoretical analysis to prove that convergence factor of the proposed method is bounded by $\gamma=c(H)\rho(\frac{\delta}{H},d_{m}^{-},d_{M}^{+})$, where $H$ is the diameter of subdomains, $\delta$ is the overlapping size and $d_{m}^{-},d_{M}^{+}$ are the distances from both ends of the targeted eigenvalues to others (see Figure \ref{Figure2_targeted_eigenvalues} in Introduction). The positive number $\rho(\frac{\delta}{H},d_{m}^{-},d_{M}^{+})<1$ is independent of the fine mesh size and the internal gaps among the targeted eigenvalues. The $H$-dependent constant $c(H)$ decreases monotonically to 1, as $H\to 0$, which means the more subdomains lead to the better convergence. Numerical results supporting our theory are given.
\end{abstract}

\begin{keywords}
PDE eigenvalue problems, finite element methods, interior multiple and clustered eigenvalues, additive Schwarz methods
\end{keywords}

\section{Introduction}\label{sec1}
\par Solving large-scale eigenvalue problems arising from the finite element discretization of partial differential operators is one of the fundamental problems in modern science and engineering. The problem is extremely significant, so finite element methods for PDE eigenvalue problems and efficient eigensolvers for corresponding large-scale discrete systems have been extensively studied in the literature (see, e.g., \cite{MR962210,MR2652780,MR3647956,MR4136540,MR3407250,MR4948586,MR2430983,MR3347459,MR2206452,MR4735245,MR3266953,MR4474059,MR2214750}).
However, unlike computing a simple eigenvalue of PDE operators, the design and analysis of large-scale eigensolvers for computing interior multiple and clustered eigenvalues of PDE operators (see Figure \ref{Figure1_targeted_eigenvalues}), pose a greater challenge. 

\begin{figure}[H]
\centering
\begin{tikzpicture}[scale=1.2]
\draw[->](-3,0)--(7,0);
\draw (-2,0) --(-2,0.2) node[below=5pt,scale=1]{$\lambda_{m_{1}}$};
\draw (-1.8,0) --(-1.8,0.1) node[below=3pt,scale=1]{\ };
\draw (-1.75,0) --(-1.75,0.1) node[below=3pt,scale=1]{\ };
\draw (-1.7,0) --(-1.7,0.1) node[below=3pt,scale=1]{\ };
\draw (-1.6,0) --(-1.6,0.1) node[below=3pt,scale=1]{\ };
\draw (-1.65,0) --(-1.65,0.1) node[below=3pt,scale=1]{\ };
\draw (-1.625,0) --(-1.625,0.1) node[below=3pt,scale=1]{\ };
\draw (-1.5,0) --(-1.5,0.1) node[below=3.6pt,scale=1]{\ };
\draw (-1.3,0) --(-1.3,0.1) node[below=3.6pt,scale=1]{\ };
\draw (-1.1,0) --(-1.1,0.1) node[below=3.6pt,scale=1]{\ };
\draw (-1,0) --(-1,0.2) node[below=5pt,scale=1]{$\lambda_{M_{1}}$};
\draw [decorate, decoration
       = {calligraphic brace,
            raise=5pt,     
            aspect=0.5,    
            amplitude=3pt  
         }
       ] (-2,0.1) --  (-1,0.1);
\draw (-1.5,0.2) -- (-1.5,0.2) node[above=3pt,scale=1]{cluster 1};
\draw (0.2,0) --(0.2,0.2) node[below=5pt,scale=1]{$\lambda_{m_{2}}$};
\draw (0.3,0) --(0.3,0.1) node[below=3pt,scale=1]{\ };
\draw (0.35,0) --(0.35,0.1) node[below=3pt,scale=1]{\ };
\draw (0.4,0) --(0.4,0.1) node[below=3pt,scale=1]{\ };
\draw (0.5,0) --(0.5,0.1) node[below=3pt,scale=1]{\ };
\draw (0.6,0) --(0.6,0.1) node[below=3pt,scale=1]{\ };
\draw (0.65,0) --(0.65,0.1) node[below=3pt,scale=1]{\ };
\draw (0.7,0) --(0.7,0.1) node[below=3pt,scale=1]{\ };
\draw (0.8,0) --(0.8,0.1) node[below=3pt,scale=1]{\ };
\draw (0.95,0) --(0.95,0.1) node[below=3.6pt,scale=1]{\ };
\draw (0.975,0) --(0.975,0.1) node[below=3.6pt,scale=1]{\ };
\draw (0.9,0) --(0.9,0.1) node[below=3.6pt,scale=1]{\ };
\draw (1,0) --(1,0.2) node[below=5pt,scale=1]{$\lambda_{M_{2}}$};
\draw [decorate, decoration
       = {calligraphic brace,
            raise=5pt,     
            aspect=0.5,    
            amplitude=3pt  
         }
       ] (0.2,0.1) --  (1,0.1);
\draw (0.6,0.2) -- (0.6,0.2) node[above=3pt,scale=1]{cluster 2};
\draw (2.2,0) --(2.2,0.2) node[below=5pt,scale=1]{$\lambda_{m_{3}}$};
\draw (2.3,0) --(2.3,0.1) node[below=3pt,scale=1]{\ };
\draw (2.33,0) --(2.33,0.1) node[below=3pt,scale=1]{\ };
\draw (2.35,0) --(2.35,0.1) node[below=3pt,scale=1]{\ };
\draw (2.38,0) --(2.38,0.1) node[below=3pt,scale=1]{\ };
\draw (2.4,0) --(2.4,0.1) node[below=3pt,scale=1]{\ };
\draw (2.5,0) --(2.5,0.1) node[below=3pt,scale=1]{\ };
\draw (2.6,0) --(2.6,0.1) node[below=3pt,scale=1]{\ };
\draw (2.65,0) --(2.65,0.1) node[below=3pt,scale=1]{\ };
\draw (2.7,0) --(2.7,0.1) node[below=3pt,scale=1]{\ };
\draw (2.8,0) --(2.8,0.1) node[below=3pt,scale=1]{\ };
\draw (2.9,0) --(2.9,0.1) node[below=3.6pt,scale=1]{\ };
\draw (3,0) --(3,0.2) node[below=5pt,scale=1]{$\lambda_{M_{3}}$};
\draw [decorate, decoration
       = {calligraphic brace,
            raise=5pt,     
            aspect=0.5,    
            amplitude=3pt  
         }
       ] (2.2,0.1) --  (3,0.1);
\draw (2.6,0.2) -- (2.6,0.2) node[above=3pt,scale=1]{cluster 3};
\draw (4,0) --  (4,0) node[above=3pt,scale=1]{$......$};
\draw (5.2,0) --(5.2,0.2) node[below=5pt,scale=1]{$\lambda_{m_{j}}$};
\draw (5.3,0) --(5.3,0.1) node[below=3pt,scale=1]{\ };
\draw (5.35,0) --(5.35,0.1) node[below=3pt,scale=1]{\ };
\draw (5.4,0) --(5.4,0.1) node[below=3pt,scale=1]{\ };
\draw (5.5,0) --(5.5,0.1) node[below=3pt,scale=1]{\ };
\draw (5.6,0) --(5.6,0.1) node[below=3pt,scale=1]{\ };
\draw (5.65,0) --(5.65,0.1) node[below=3pt,scale=1]{\ };
\draw (5.67,0) --(5.67,0.1) node[below=3pt,scale=1]{\ };
\draw (5.675,0) --(5.675,0.1) node[below=3pt,scale=1]{\ };
\draw (5.7,0) --(5.7,0.1) node[below=3pt,scale=1]{\ };
\draw (5.8,0) --(5.8,0.1) node[below=3pt,scale=1]{\ };
\draw (5.9,0) --(5.9,0.1) node[below=3.6pt,scale=1]{\ };
\draw (6,0) --(6,0.2) node[below=5pt,scale=1]{$\lambda_{M_{j}}$};
\draw [decorate, decoration
       = {calligraphic brace,
            raise=5pt,     
            aspect=0.5,    
            amplitude=3pt  
         }
       ] (5.2,0.1) --  (6,0.1);
\draw (5.6,0.2) -- (5.6,0.2) node[above=3pt,scale=1]{cluster $j$};
\draw (7,0) -- (7,0) node[right=3pt,scale=1]{$\mathbb{R}$};
\end{tikzpicture}
\caption{Different clusters of eigenvalues: $\lambda_{m_{j}}\leq \lambda_{m_{j}+1}\leq \cdots\leq \lambda_{M_{j}},\ j=1,2,\cdots $}
\label{Figure1_targeted_eigenvalues}
\end{figure}
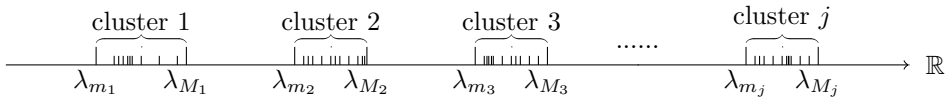

\par Regarding the interior multiple and cluster eigenvalues of partial differential operators, many scholars study their a priori and a posteriori error estimates, and provide numerous new mathematical tools for rapidly solving and analyzing interior multiple and cluster eigenvalues. Babu\v{s}ka and Osborn \cite{MR962210} take advantage of finite element methods to approximate the interior multiple eigenvalues and corresponding eigenfunctions of symmetric elliptic operators. More than ten years later, Knyazev and Osborn in \cite{MR2206452} give a new a priori error estimate for multiple and clustered eigenvalues of symmetric elliptic eigenvalue problems. It is shown in \cite{MR2206452} that the a priori error for eigenvalues in a cluster is independent of interior gaps among targeted eigenvalues in this cluster $\{\lambda_{i}\}_{i=m}^{M}$, depending on the distance $\min\{d_{m}^{-},d_{M}^{+}\}$ from both ends of the targeted eigenvalues to others (see Figure \ref{Figure2_targeted_eigenvalues}). For a posteriori error, Dai et al. \cite{MR3407250} develop an a posteriori error estimator for multiple eigenvalues, and prove the convergence and quasi-optimal complexity of the adaptive finite element methods (AFEM). Subsequently, Gallistl \cite{MR3347459} studies clustered eigenvalues, and proves the convergence and quasi-optimal complexity of AFEM. This idea has been further extended to the higher-order AFEM \cite{MR3532806}, the non-conforming AFEM \cite{MR3259027}, and the mixed AFEM \cite{MR3647956}. Lately, Canc\`es et al. \cite{MR4136540} present a guaranteed a posteriori error estimator for interior multiple and clustered eigenvalues (from $m$th to $M$th, see Figure 2) of symmetric elliptic operators. They give a rigorous analysis for the reliability and efficiency of the a posteriori error estimator. The constants in the proof are independent of the interior gaps among the targeted eigenvalues $\{\lambda_{i}\}_{i=m}^{M}$, depending on the distance $\min\{d_{m}^{-},d_{M}^{+}\}$ from both ends of the targeted eigenvalues to others (see Figure \ref{Figure2_targeted_eigenvalues}). More recently, based on the Courant-Fischer principle and the Davis–Kahan method, Liu and Vejchodsk\'{y}  \cite{MR4474059} propose two methods to provide greatly improved accuracy of fully computable a posteriori error estimate for eigenfunction approximations corresponding to interior multiple and clustered eigenvalues, which make AFEMs solve interior multiple and clustered eigenvalues efficiently. 

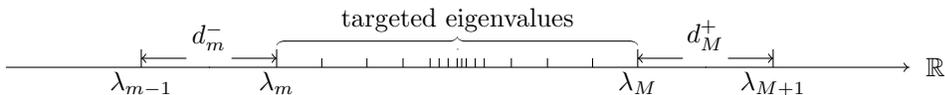
\begin{figure}[H]
\centering
\begin{tikzpicture}[scale=1.2]
\draw[->](-5,0)--(5,0);
\draw (-3.5,0) --(-3.5,0.2) node[below=5pt,scale=1]{$\lambda_{m-1}$};
\draw (-2,0) --(-2,0.2) node[below=5pt,scale=1]{$\lambda_{m}$};
\draw (-1.5,0) --(-1.5,0.1) node[below=3pt,scale=1]{\ };
\draw (-1,0) --(-1,0.1) node[below=3.6pt,scale=1]{\ };
\draw (-0.6,0) --(-0.6,0.1) node[below=3.6pt,scale=1]{\ };
\draw (-0.3,0) --(-0.3,0.1) node[below=3.6pt,scale=1]{\ };
\draw (-0.2,0) --(-0.2,0.1) node[below=3.6pt,scale=1]{\ };
\draw (-0.1,0) --(-0.1,0.1) node[below=3.6pt,scale=1]{\ };
\draw (0,0) -- (0,0.1) node[below=3.6pt,scale=1]{\ };
\draw (0.05,0) --(0.05,0.1) node[below=3.6pt,scale=1]{\ };
\draw (0.1,0) --(0.1,0.1) node[below=3.6pt,scale=1]{\ };
\draw (0.2,0) --(0.2,0.1) node[below=3.6pt,scale=1]{\ };
\draw (0.3,0) --(0.3,0.1) node[below=3.6pt,scale=1]{\ };
\draw (0.6,0) --(0.6,0.1) node[below=3.6pt,scale=1]{\ };
\draw (1,0) --(1,0.1) node[below=3.6pt,scale=1]{\ };
\draw (1.5,0) --(1.5,0.1) node[below=3.6pt,scale=1]{\ };
\draw (2,0) --(2,0.2) node[below=5pt,scale=1]{$\lambda_{M}$};
\draw (3.5,0) --(3.5,0.2) node[below=5pt,scale=1]{$\lambda_{M+1}$};
\draw [decorate, decoration
       = {calligraphic brace,
            raise=5pt,     
            aspect=0.5,    
            amplitude=3pt  
         }
       ] (-2,0.1) --  (2,0.1);
\draw (0,0.2) -- (0,0.2) node[above=3pt,scale=1]{targeted eigenvalues};
\draw[->] (-2.5,0.1) -- (-2,0.1);
\draw[<-] (-3.5,0.1) -- (-3,0.1);
\draw (-2.75,0) --  (-2.75,0) node[above=3pt,scale=1]{$d_{m}^{-}$};
\draw[->] (3,0.1) -- (3.5,0.1);
\draw[<-] (2,0.1) --  (2.5,0.1);
\draw (2.75,0) --  (2.75,0) node[above=3pt,scale=1]{$d_{M}^{+}$};
\draw (5,0) -- (5,0) node[right=3pt,scale=1]{$\mathbb{R}$};
\end{tikzpicture}
\caption{targeted eigenvalues: $\lambda_{m}\leq \lambda_{m+1}\leq \cdots\leq \lambda_{M}$}
\label{Figure2_targeted_eigenvalues}
\end{figure}

\par For solving elliptic eigenvalue problems, two-grid methods have been adopted, and achieve asymptotic optimal accuracy under the conditions that $h = O(H^{i})$ respectively ($i = 2$ or $4$), as evidenced in \cite{MR1677419,MR2785459,MR2831063}. Two-grid methods are also used to solve interior multiple eigenvalues of elliptic operators. It is also worth noting the insightful approach developed by Xu and Zhou \cite{MR2008551}, which utilizes a coarse space together with a sequence of local parallel subspaces to handle low- and high-frequency error components. A similar strategy is adopted in the present work to efficiently treat interior multiple and clustered eigenvalues. For an algebraic eigenvalue problem, many types of block methods have been designed and analyzed for computing extreme or interior multiple and clustered eigenvalues of a linear operator arising from finite element (difference) approximation of PDE operators (see, e.g., \cite{MR3396212,MR3383329,MR3635864,MR3434007}), among which a block type of the Jacobi-Davidson method proposed in \cite{MR1778354} is one of the most popular methods in practice. For large-scale discrete PDE eigenvalue problems, recently, we presented an efficient two-level block preconditioned Jacobi-Davidson (BPJD) method (see \cite{MR4735245}) for solving the first several smallest eigenvalues of symmetric elliptic operators. In many actual applications, it is necessary to compute a relatively large number of interior multiple and clustered eigenvalues with the corresponding eigenfunctions of elliptic operators. However, when the number of eigenpairs is relatively large, the two-level BPJD method in \cite{MR4735245} requires a relatively large amount of memory to store the higher dimensional trial subspace which is used to solve the Rayleigh-Ritz problem. It may be ineffective for solving more interior multiple and clustered eigenvalues. Therefore, designing large-scale efficient solvers for computing directly interior multiple and clustered eigenvalues with the corresponding eigenfunctions is a significant task and will exceedingly benefit the high-performance computation.

\par To address the above issue, we propose an efficient two-level additive Schwarz method for solving interior multiple and clustered eigenvalues with the corresponding eigenfunctions of symmetric elliptic operators. On every outer iteration, the proposed method only needs to solve several preconditioned Jacobi-Davidson correction equations corresponding to the targeted eigenvalues, and then take the iterative approximations of targeted eigenpairs by solving a small-dimensional eigenvalue problem in a trial subspace. Compared to the two-level BPJD method proposed in \cite{MR4735245}, it is easier to compute more interior multiple and clustered eigenpairs because the interior multiple and clustered eigenvalues (from $m$th to $M$th eigenvalues, see Figure \ref{Figure2_targeted_eigenvalues}) can be computed  directly, without computing the first $m-1$ smallest eigenvalues on fine mesh. It is different from traditional preconditioned subspace iteration methods such as Local Optimal Block Preconditioned Conjugate Gradient (LOBPCG) method (see \cite{MR1861263}) and Block Preconditioned Gradient Descent (BPGD) method (see \cite{MR3985474} and references therein).  

\par We must emphasize that it is nontrivial to give a rigorous theoretical analysis for the two-level additive Schwarz method for computing interior multiple and clustered eigenvalues. Firstly, many important results in \cite{MR4735245,MR3749389,MR3957894,MR4628762} developed for the first or the first several eigenpairs of symmetric elliptic operators are not applicable. Secondly, it is essential to ensure that the constants in all inequalities are independent of the gaps among the targeted interior eigenvalues. With the aid of a combination of some mathematical tools, including analyzing technique of high and low frequency errors, constructions of two auxiliary eigenvalue problems and the analysis framework of two-level additive Schwarz method, we give a rigorous analysis for the proposed method, and then obtain the following convergence factor: 
\begin{align}\label{align_introduction_1}
\sum_{i=m}^{M}(\lambda_{i}^{k+1}-\lambda_{i}^{h})\leq \gamma \sum_{i=m}^{M}(\lambda_{i}^{k}-\lambda_{i}^{h}),
\end{align}
where $\gamma=c(H)\rho(\frac{\delta}{H},d_{m}^{-},d_{M}^{+})$. The positive number $\rho(\frac{\delta}{H},d_{m}^{-},d_{M}^{+})<1$ is independent of the fine mesh size $h$ and the internal gaps among the targeted eigenvalue, depending on the distance from both ends of the targeted eigenvalues to others and the ratio $\frac{\delta}{H}$. The $H$-dependent constant $c(H)$ decreases monotonically to 1, as $H\to 0$. Moreover, we have no assumption on the relationship between $H$ and $h$, as well as internal gaps among the targeted interior eigenvalues.

\par The rest of this paper is organized as follows: Some preliminaries are introduced in Section 2. In Section 3,
we present the two-level additive Schwarz method for symmetric elliptic eigenvalue problems. The main convergence analysis is given in Section 4. Finally, we present our numerical results in Section 5 and the conclusion in Section 6.

\section{Model problems and preliminaries}\label{sec2}
\par In this section, we introduce some notations and preliminaries. Throughout this paper, we use the standard notations for the Sobolev spaces $H^{s}(\Omega)$ and $H_{0}^{s}(\Omega)$ with their associated norms and semi-norms, where $s=1,2$ and we assume that $\Omega\subset \mathbb{R}^{d}$ is a convex polygonal domain $(d=2)$ or polyhedral domain $d=3$. 
\par We consider the following eigenvalue problems
\begin{equation}\label{Laplace}
-\Delta u=\lambda u\ \ \ \ x\in \Omega,\ \ \ \ u=0\ \ \ \ \ x\in \partial \Omega,
\end{equation}
and 
\begin{equation}\label{biharmonic}
\Delta^{2} u=\lambda u\ \ \ \ x\in \Omega,\ \ \frac{\partial{u}}{\partial{\bm{n}}}=u=0\ \ \ \ \ x\in \partial \Omega,
\end{equation}
where $\bm{n}$ is the unit outward vector on $\partial \Omega$. For problems \eqref{Laplace} and \eqref{biharmonic}, their variational forms are: find $(\lambda,u)\in \mathbb{R}\times V$ such that
\begin{align}\label{Align_vari_eigen}
a(u,v)=\lambda b(u,v)\ \ \ \ \forall\ v\in V,
\end{align}
where 
$ a(v,w)=\int_{\Omega}\nabla{v} \cdot \nabla{w}\ dx$ for all $v,w \in V:=H_{0}^{1}(\Omega)$ or  
$a(v,w)=\int_{\Omega}\Delta{v} \Delta{w}\ dx$ for all $v,w \in V:=H_{0}^{2}(\Omega)$,
and $b(v,w)=\int_{\Omega}vw dx$\ \ for all $v,w\in L^{2}(\Omega)$. Define the energy norm $||v||_{a}^{2}:=a(v,v)$ for all $v\in V$ and $||w||_{b}^{2}:=b(w,w)$ for all $w\in L^{2}(\Omega)$. For convenience, we define $Rq(v):=\frac{a(v,v)}{b(v,v)}>0$ and $Rt(v)=\frac{1}{Rq(v)}$ for all $v\ (\ne 0) \in V$. For $2s$th ($s=1, 2$) order symmetric elliptic eigenvalue problems with homogeneous essential boundary condition, we set $V=H_{0}^{s}(\Omega)$. Then, their variational forms are similar to \eqref{Align_vari_eigen}. Define $T:L^{2}(\Omega)\to V$ such that for any $f\in L^{2}(\Omega)$, $a(Tf,v)=b(f,v)$ for all $v\in V.$  Since $V$ is embedded compactly in $L^{2}(\Omega)$, we know that $T:L^{2}(\Omega)\to L^{2}(\Omega)$ and $T|_{V}:V\to V$ are symmetric, positive definite and compact operators. By the Hilbert-Schmidt theorem, we know that $Tu_{i}=\lambda_{i}^{-1}u_{i}$ with $0<\lambda_{1}\leq\lambda_{2}\leq \cdots \leq \lambda_{n}\to +\infty$, as $n\to +\infty,$
and the corresponding eigenfunctions satisfying
\begin{align}\label{Align_eigen_ortho}
a(u_{i},u_{j})=\lambda_{i}b(u_{i},u_{j})=\lambda_{i}\delta_{ij},
\end{align}
where $\delta_{ij}$ is the Kronecker delta. In $\{\lambda_{i}\}_{i=1}^{+\infty}$, $\lambda_{i}$ is repeated according to its geometric multiplicity. In this paper, we define $d_{i}^{+}:=\lambda_{i+1}-\lambda_{i}$ and $d_{i}^{-}:=\lambda_{i}-\lambda_{i-1}$, $i=1,2,3,....$, here let $\lambda_{0}=0$. In the following, we are interested in several interior multiple and clustered eigenvalues 
$\{\lambda_{m},\lambda_{m+1},\cdots,\lambda_{M}\}$ and the corresponding eigenfunctions $\{u_{m},u_{m+1},\cdots,u_{M}\}$, where $m>1$. Furthermore, by \eqref{Align_eigen_ortho}, it is easy to know that $V=U_{L}\oplus U_{J}\oplus U_{R}$,
where $U_{L}={\rm span}\{u_{i}\}_{i=1}^{m-1}$, $U_{J}={\rm span}\{u_{i}\}_{i=m}^{M}$, $U_{R}={\rm span}\{u_{i}\}_{i=M+1}^{+\infty}$ and the notation $\oplus$ denotes the orthogonal direct sum of subspaces with respect to $b(\cdot,\cdot)$ (also $a(\cdot,\cdot)$). Now we give a reasonable assumption.
\begin{assumption}\label{Assumption1}
Assume that $\lambda_{m-1}<\lambda_{m}$ and $\lambda_{M}<\lambda_{M+1}$.
\end{assumption}

\begin{remark}
The Assumption \ref{Assumption1} is reasonable. In practical computation, if Assumption \ref{Assumption1} is not met, we may consider to compute more eigenvalues (from $m-q_{1}$ to $M+q_{2}$) so that $\lambda_{m-q_{1}}$ and $\lambda_{M+q_{2}}$ satisfy Assumption \ref{Assumption1}, where $q_{1}$ and $q_{2}$ are positive integers. So, this assumption is not a limitation for our practical computations.
\end{remark}

\par Subsequently, we consider the finite element methods for solving symmetric elliptic eigenvalue problems. Let $V^{h}$ be a conforming finite element space based on a shape regular and quasi-uniform triangular or rectangular partition $\mathcal{T}_{h}$ with the mesh size $h$. Specifically, for the second order elliptic eigenvalue problem, we choose $P_{1}$-conforming finite element space. For the fourth order elliptic eigenvalue problem, we choose the Bogner-Fox-Schmit (BFS) finite element space.  The weak form of finite element discretization of \eqref{Align_vari_eigen} reads: find $(\lambda^{h},u^{h})\in \mathbb{R}\times V^{h}$ such that
\begin{align}\label{Align_vari_eigen_FEM}
a(u^{h},v^{h})=\lambda^{h} b(u^{h},v^{h})\ \ \ \ \forall\ v^{h}\in V^{h}.
\end{align}
Define $T^{h}:L^{2}(\Omega)\to V^{h}$ such that for any $f\in L^{2}(\Omega)$, $a(T^{h}f,v^{h})=b(f,v^{h})$ for all $v^{h}\in V^{h}.$ It is easy to know that $T^{h}|_{V^{h}}:V^{h}\to V^{h}$ is symmetric and positive definite. For convenience of notations, we use $T^{h}$ to represent $T^{h}|_{V^{h}}$ in the following. Moreover, $T^{h}u_{i}^{h}=(\lambda_{i}^{h})^{-1}u_{i}^{h}$ with all discrete eigenvalues satisfying $0<\lambda_{1}^{h}\leq \lambda_{2}^{h}\leq \cdots \leq \lambda_{N_{h}}^{h},$
and the corresponding eigenfunctions satisfying
\begin{align}\label{Align_discre_eigen_ortho}
a(u_{i}^{h},u_{j}^{h})=\lambda_{i}^{h}b(u_{i}^{h},u_{j}^{h})=\lambda_{i}^{h}\delta_{ij},
\end{align}
where $N_{h}=\dim(V^{h})$. Define $A^{h}:V^{h}\to V^{h}$ such that $b(A^{h}w^{h},v^{h})=a(w^{h},v^{h})$ for all $ w^{h},\ v^{h}\in V^{h}$. Then,
$A^{h}u_{i}^{h}=\lambda_{i}^{h}u_{i}^{h}$.
By \eqref{Align_discre_eigen_ortho}, we know
\begin{align}\label{Align_fine_decom}
V^{h}=U_{L}^{h}\oplus U_{J}^{h}\oplus U_{R}^{h},
\end{align}
where $U_{L}^{h}={\rm span}\{u_{i}^{h}\}_{i=1}^{m-1}$, $U_{J}^{h}={\rm span}\{u_{i}^{h}\}_{i=m}^{M}$ and $U_{R}^{h}={\rm span}\{u_{i}^{h}\}_{i=M+1}^{N_{h}}$. For convenience of theoretical analysis, we denote by $Q_{L}^{h}:V^{h}\to U_{L}^{h}$, $Q_{J}^{h}:V^{h}\to U_{J}^{h}$ and $Q_{R}^{h}:V^{h}\to U_{R}^{h}$ three orthogonal projectors with respect to $b(\cdot,\cdot)$ (also $a(\cdot,\cdot)$). In this paper, we call $U_{L}^{h} \ (U_{R}^{h})$ the low (high) frequency subspace.

\par In what follows, we set up the overlapping domain decomposition. Let $\{\Omega_{l}\}_{l=1}^{N}$ be a coarse shape regular and quasi-uniform partition of $\Omega$, and we denote it by $\mathcal{T}_{H}$. We define $H:=\max\{H_{l}\}_{l=1}^{N},\ \text{where}\ H_{l}=\text{diam}(\Omega_{l}).$
The fine shape regular and quasi-uniform partition $\mathcal{T}_{h}$ is obtained by subdividing $\mathcal{T}_{H}$. To obtain overlapping subdomains $\{\Omega_{l}^{'}\}_{l=1}^{N}$, $\Omega_{l}$ are enlarged by adding fine elements inside $\Omega$ layer by layer such that $\partial \Omega_{l}^{'}$ does not cut through any fine element. To measure the overlapping width between neighboring subdomains, we define
$\delta:=\min\{\delta_{l}\}_{l=1}^{N},\ \text{where}\ \delta_{l}={\rm dist} (\partial\Omega_{l}\setminus\partial\Omega,\partial\Omega_{l}^{'}\setminus\partial\Omega).$
Hence, we know that ${\rm diam}(\Omega_{l}^{'})=\mathcal{O}(H_{l})$. The local subspaces may be defined by $V^{(l)}:=V^{h}\cap H_{0}^{s}(\Omega_{l}^{'}),\ s=1,2$. We introduce the finite covering (see \cite{MR2104179}) assumption as follows:

\begin{assumption}\label{Assumption2}
The partition $\{\Omega_{l}^{'}\}_{l=1}^{N}$ may be colored using at most $N_{0}$ colors, in such a way that subdomains with the same color are disjoint. The integer $N_{0}$ is independent of $N$.
\end{assumption}

\par Next, we present a priori error estimates for interior multiple and clustered eigenvalues of symmetric elliptic operators (please see Theorems 2.11 and 3.3 in \cite{MR2206452}). For a priori error estimates of the eigenfunctions, we give two estimates, which is obtained by modified the center and radius of the circle in Appendix of \cite{MR4735245}. Unless otherwise stated, the letters $C$ in this paper denote generic positive constants independent of the fine mesh size $h$ and the internal gaps among the eigenvalues 
$\{\lambda_{i}\}_{i=m}^{M}$, which may be different at different occurrences.

\begin{theorem}\label{theorem_priori}
For problems \eqref{Align_vari_eigen}, if Assumption \ref{Assumption1} holds, then the discrete eigenvalues in \eqref{Align_vari_eigen_FEM}   $\lambda_{m}^{h},\lambda_{m+1}^{h},\cdots\lambda_{M}^{h}$ converge to exact eigenvalues $\lambda_{m},\lambda_{m+1},\cdots \lambda_{M}$ in \eqref{Align_vari_eigen}, respectively, as $h\to 0$. Moreover, there exists a $h_{0}\ (<1)$ such that $0<h<h_{0}$, we have
\begin{align}\label{Align_a_priori_eigenvalue}
\lambda_{i}^{h}-\lambda_{i}\leq Ch^{2},\ \ \ \ i=m,m+1,\cdots,M,
\end{align}
and
\begin{align}\label{Align_a_priori_eigenfun}
\theta_{a}(U_{J},U_{J}^{h})\leq Ch, \ \ \ \ \theta_{b}(U_{J},U_{J}^{h})\leq Ch^{2},
\end{align}
where the constant $C$ depends on $d_{m}^{-}$ and $d_{M}^{+}$, and $\theta_{a}(U_{J},U_{J}^{h}) $ and $\theta_{b}(U_{J},U_{J}^{h})$ denote the gaps between $U_{J}$ and $U_{J}^{h}$ with respect to $||\cdot||_{a}$ and $||\cdot||_{b}$.
\end{theorem}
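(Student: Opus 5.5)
We will obtain the result from the spectral-projection (Riesz contour) framework, which is what Knyazev--Osborn use (Theorems 2.11 and 3.3 in \cite{MR2206452}). The key choice is the contour: a single circle $\Gamma$ in the complex plane that encloses the \emph{whole} cluster $\{\lambda_i^{-1}\}_{i=m}^{M}$ of eigenvalues of $T$ and nothing else. We do not use separate circles around individual eigenvalues. Its center is $\frac12(\lambda_m^{-1}+\lambda_M^{-1})$, and its radius is $\frac12(\lambda_m^{-1}-\lambda_M^{-1})$ enlarged by a fixed fraction of the outer gaps $\lambda_{m-1}^{-1}-\lambda_m^{-1}$ and $\lambda_M^{-1}-\lambda_{M+1}^{-1}$. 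These gaps are controlled by $d_m^-$ and $d_M^+$. This is the ``modified center and radius'' of the Appendix of \cite{MR4735245}. Then $\operatorname{dist}(\Gamma,\sigma(T))$ is bounded below in terms of $d_m^-,d_M^+$ only. Consequently $\sup_{z\in\Gamma}\|(z-T)^{-1}\|$ and the length of $\Gamma$ are bounded independently of the internal gaps.

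The first step is convergence. For $P=\frac{1}{2\pi i}\oint_\Gamma (z-T)^{-1}dz$ and $P_h$ defined analogously with $T^h P^h$ ($P^h$ the $b$-projection onto $V^h$), we use $\|T-T^hP^h\|_{b}\le Ch^2$ and $\|(T-T^hP_a^h)|_{V}\|_a\le Ch$. These follow from elliptic regularity on the convex domain (for the biharmonic case with BFS elements, the corresponding $H^3/H^4$-type regularity), together with standard FEM duality. A Neumann-series argument then shows that for $h<h_0$ the resolvent of $T^h$ exists on $\Gamma$ with a uniform bound. Hence exactly $M-m+1$ discrete eigenvalues lie inside $\Gamma$, namely $(\lambda_i^h)^{-1}$, $i=m,\dots,M$, and $P_h\to P$. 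By the min--max principle, $\lambda_i^h\ge\lambda_i$, and the counting implies $\lambda_i^h\to\lambda_i$.

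The second step is the eigenfunction gaps. We write $P-P_h=\frac{1}{2\pi i}\oint_\Gamma (z-T^h)^{-1}(T-T^h)(z-T)^{-1}dz$ and apply this to unit vectors of $U_J$. This gives $\theta_b(U_J,U_J^h)\le C\|(T-T^h)|_{U_J}\|_b\le Ch^2$ and $\theta_a\le Ch$. In each case $C$ involves only the resolvent bounds on $\Gamma$, and hence only $d_m^-,d_M^+$. The reverse direction of the gap is handled by the equality of dimensions.

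The third step is the eigenvalue estimate. For each $i$ we use the Knyazev--Osborn trick. Take $w\in U_J^h$ and its preimage component in $U_J$. The min--max characterization over the subspace $\mathrm{span}\{u_1^h,\dots,u_{m-1}^h\}\oplus Q_J^h$-images of $u_m,\dots,u_i$ yields
\[
\lambda_i^h-\lambda_i\le C\sup_{u\in U_J,\|u\|_b=1}\|u-R_hu\|_a^2\le Ch^2,
\]
where $R_h$ is the Ritz projection and the constant depends only on $d_m^-,d_M^+$. The delicate point is to avoid a $1/(\lambda_{i+1}-\lambda_i)$ factor. We handle this by using the cluster projector $P_h$ (not individual projectors) and the identity $\lambda_i^h-\lambda_i$ bounded via Rayleigh quotients of $P_hu$, $u\in U_J$. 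The main obstacle is this step: establishing the $h^2$ rate uniformly in internal gaps. We expect to rely on the cluster-robust bound in Theorem 3.3 of \cite{MR2206452} rather than re-derive it.
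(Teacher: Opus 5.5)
Your proposal follows essentially the same route as the paper. The paper likewise takes the eigenvalue bound from Theorems 2.11 and 3.3 of \cite{MR2206452}, and it obtains $\theta_a(U_J,U_J^h)\le Ch$ and $\theta_b(U_J,U_J^h)\le Ch^2$ from a Riesz-projection argument with one circle around the whole cluster, whose center and radius are modified from the Appendix of \cite{MR4735245} so that only $d_m^-$ and $d_M^+$ enter. The only remark is that your ``main obstacle'' is easier than you fear, and it is this step, not a single fixed contour plus counting, that actually gives $\lambda_i^h\to\lambda_i$: min--max over $R_h\,{\rm span}\{u_1,\dots,u_i\}$, with $|b(u,u-R_hu)|=|a((I-R_h)Tu,(I-R_h)u)|\le C\lambda_i h^2$ for $\|u\|_b=1$, already yields $\lambda_i^h-\lambda_i\le C\lambda_i^2h^2$ with no gap dependence at all.
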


\section{The two-level additive Schwarz method}\label{sec3}
\par In this section, we present the two-level additive Schwarz method for computing interior multiple and clustered eigenpairs. 
\par In order to present our preconditioner, define $A^{H}:V^{H}\to V^{H}$ such that for any $w^{H}\in V^{H}$, $ b(A^{H}w^{H},v^{H})=a(w^{H},v^{H})$ for all $v^{H}\in V^{H}$, and denote by $Q^{H}:V^{h}\to V^{H}$ a $b(\cdot,\cdot)$-orthogonal projector. Similar to \eqref{Align_fine_decom}, $V^{H}=U_{L}^{H}\oplus U_{J}^{H}\oplus U_{R}^{H}$ holds,
where $U_{L}^{H}={\rm span}\{u_{i}^{H}\}_{i=1}^{m-1}$, $U_{J}^{H}={\rm span}\{u_{i}^{H}\}_{i=m}^{M}$ and $U_{R}^{H}={\rm span}\{u_{i}^{H}\}_{i=M+1}^{N_{H}}$ with $u_{i}^{H}$ being the eigenfunction of $A^{H}$ and $N_{H}=\dim{V^{H}}$. Let $Q_{L}^{H}:V^{H}\to U_{L}^{H}$, $Q_{J}^{H}:V^{H}\to U_{J}^{H}$ and $Q_{R}^{H}:V^{H}\to U_{R}^{H}$ three $b(\cdot,\cdot)$-orthogonal projectors. We also define $A^{(l)}:V^{(l)}\to V^{(l)}\ (l=1,2,\cdots,N)$ such that $b(A^{(l)}w^{(l)},v^{(l)})=a(w^{(l)},v^{(l)})$ for all $v^{(l)}\in V^{(l)}$, and denote by $Q^{(l)}:V^{(l)}\to V^{(l)}$ a $b(\cdot,\cdot)$-orthogonal projector. 
For any $U\subset V^{h}$, $U^{\perp}$ represents the $b(\cdot,\cdot)$-orthogonal complement of $U$. For $i=m,m+1,\cdots, M$, our parallel preconditioner defined as 
\begin{equation}\label{Equation_Preconditioner}
(B_{i}^{k})^{-1}:=(B_{0,i}^{k})^{-1}Q_{R}^{H}Q^{H}+\sum_{l=1}^{N}(B_{l,i}^{k})^{-1}Q^{(l)},
\end{equation}
is to precondition the block Jacobi-Davidson correction equations
\begin{align*}
b((A^{h}-\lambda_{i}^{k})\hat{t}_{i}^{k},v)=b(r_{i}^{k},v)\ \ \ \ \forall\ v\in  (U_{J}^{k})^{\perp},
\end{align*}
as $h\to 0$, where $B_{0,i}^{k}:=A^{H}-\lambda_{i}^{k}$ and $B_{l,i}^{k}:=A^{(l)}-\lambda_{i}^{k}$, $r_{i}^{k}=\lambda_{i}^{k}u_{i}^{k}-A^{h}u_{i}^{k}$, $U^{k}_{J}={\rm span}\{u_{i}^{k}\}_{i=m}^{M}$ with  $(\lambda_{i}^{k}, u_{i}^{k})$ being the current iterative approximation of $(\lambda_{i}^{h}, u_{i}^{h})$, and $\hat{t}_{i}^{k}\in  (U_{J}^{k})^{\perp}$ is the exact solution of correction equations.  Combining the scaling argument, the Poincar\'e inequality and the inverse estimate,  for $s=1,2$ we have 
\begin{align}
\label{align_min_max_eigen}
\begin{aligned}
\lambda_{\min}(B_{0,i}^{k}|_{U_{R}^{H}})&=\lambda_{M+1}^{H}-\lambda_{i}^{k},\ \ \ \ \ \lambda_{\max}(B_{0,i}^{k}|_{U_{R}^{H}})=\mathcal{O}(H^{-2s}),\\
\lambda_{\min}(B_{l,i}^{k})&=\mathcal{O}(H_{l}^{-2s}),\ \ \ \ \ \ \ \ \ \ \ \ \ \lambda_{\max}(B_{l,i}^{k})=\mathcal{O}(h^{-2s}).
\end{aligned}
\end{align}

\begin{table}[H]\small
\begin{tabular}{p{12.6cm}}
\hline
\hline
\textbf{Algorithm 3.1} The two-level additive Schwarz method for interior eigenvalues\\
\hline
$\bm{Step\ 1}$. Given the innitial approximations $(\lambda_{i}^{0},u_{i}^{0})$ of $(\lambda_{i},u_{i})$ such that $\lambda_{i}^{0}<\lambda_{i}^{H}$ and
$$a(u_{i}^{0},u_{j}^{0})=\lambda_{i}^{0}b(u_{i}^{0},u_{j}^{0})=\lambda_{i}^{0}\delta_{ij},\ \ \ \ i,j=1, 2,\cdots,M,$$
\ \ \ \ \ \ \ \ \ \ \ \  Set $U_{J}^{0}={\rm span}\{u_{i}^{0}\}_{i=m}^{M},\Lambda_{J}^{0}=\{\lambda_{i}^{0}\}_{i=m}^{M}$ and $W_{J}^{0}={\rm span}\{u_{i}^{0}\}_{i=1}^{M}.$ \\
$\bm{Step\ 2}$. For $k=0,1,...$, solve parallel block preconditioned Jacobi-Davidson correction\\ 
\ \ \ \ \ \ \ \ \ \ \ \ equations:
\begin{equation}\label{Equation_CorrectionVariable}
t_{i}^{k}=(I-Q_{J}^{k})(B^{k}_{i})^{-1}r^{k}_{i},\ \ \ \ i=m,m+1,\cdots,M,
\end{equation}
\ \ \ \ \ \ \ \ \ \ \ \ \ where $r_{i}^{k}=\lambda_{i}^{k}u_{i}^{k}-A^{h}u_{i}^{k}$, the preconditioner $(B^{k}_{i})^{-1}$ is defined as \eqref{Equation_Preconditioner} and $Q_{J}^{k}$\\
\ \ \ \ \ \ \ \ \ \ \ \  is a $b(\cdot,\cdot)$-orthogonal projector onto $U_{J}^{k}$.\\
$\bm{Step\ 3}$. Solve eigenvalue problems in $W_{J}^{k+1}$ and take eigenpairs from the $m$-th to $M$-th:
\begin{equation}\label{Equation_SmallEigenvalueProblem}
a(u_{j}^{k+1},v)=\lambda_{j}^{k+1}b(u_{j}^{k+1},v)\ \ \ \ \forall\ v\in W_{J}^{k+1},\ \ \ b(u_{i}^{k+1},u_{j}^{k+1})=\delta_{ij} \ \ \ 1\leq i, j\leq n_{k+1},
\end{equation}
\ \ \ \ \ \ \ \ \ \ \ \ where $W_{J}^{k+1}=W_{J}^{k}+{\rm span}\{t_{i}^{k}\}_{i=m}^{M}$ and $n_{k+1}=\dim(W^{k+1}_{J})$.\\
\ \ \ \ \ \ \ \ \ \ \ \ Set $U_{J}^{k+1}=$span$\{u_{j}^{k+1}\}_{j=m}^{M}$ and $\Lambda_{J}^{k+1}=\{\lambda_{j}^{k+1}\}_{j=m}^{M}$.\\
$\bm{Step\ 4}$. If $\sum_{i=m}^{M}|\lambda_{i}^{k+1}-\lambda_{i}^{k}|<tol,$ then return $(\Lambda_{J}^{k+1},U_{J}^{k+1})$, otherwise goto $\bm{Step\ 2}$.\\
\hline
\hline
\end{tabular}
\end{table}

\begin{remark}
The assumption of $\lambda_{i}^{0}<\lambda_{i}^{H}$, $a(u_{i}^{0},u_{j}^{0})=\lambda_{i}^{0}b(u_{i}^{0},u_{j}^{0})=\lambda_{i}^{0}\delta_{ij}$, $i,j=1, 2,\cdots,M$ in Step 1 is realized through solving eigenvalue problems on $\mathcal{T}_{\widetilde{H}}$, obtained by refining  $\mathcal{T}_{H}$
\begin{equation}\label{Equation_InitialEigenvalueProblem}
A^{\widetilde{H}}u_{i}^{\widetilde{H}}=\lambda_{i}^{\widetilde{H}}u_{i}^{\widetilde{H}},\ \ \ \ b(u_{i}^{\widetilde{H}},u_{j}^{\widetilde{H}})=\delta_{ij},\ i,j=1,2,\cdots,M.
\end{equation}
and letting $u_{i}^{0}=u_{i}^{\widetilde{H}}$, $\lambda_{i}^{0}=\lambda_{i}^{\widetilde{H}}<\lambda_{i}^{H}$. In pratical computations, we may set $\widetilde{H}=\frac{H}{2}$.
\end{remark}

\begin{remark}
The two-level additive Schwarz method in this paper may be regarded as an essential extension of  the two-level BPJD method \cite{MR4735245}. For $1=m<M$ case, it reduces the method proposed in this paper into the two-level BPJD method proposed in \cite{MR4735245}.
\end{remark}

\begin{remark}
The two-level additive Schwarz method in this paper has greater advantages than that in \cite{MR4735245} for solving many eigenpairs of large-scale eigenvalue problems. Firstly, the proposed method is parallel in two ways: one is to compute the two-level additive Schwarz preconditioner to precondition the block Jacobi-Davidson correction equation, the other is to solve directly interior multiple and clustered eigenvalues so that this method may solve different clusters of eigenvalues simultaneously. Secondly, the dimension of the trial subspace $W_{J}^{k+1}$ is significantly less than that in \cite{MR4735245}, which may reduce the computer memory and accelerate to the solution of eigenvalue problems in $W_{J}^{k+1}$.
\end{remark}
\par For convenience of theoretical analysis, we first give a result for the Rayleigh-Ritz method. Define $\theta_{a}^{k}:=\theta_{a}$ $(U_{J}^{k},U_{J}^{h})$ and $\theta_{a,*}^{k}:=\theta_{a}(U^{k},U_{*}^{h}),$
where $U^{k}={\rm span}\{u_{i}^{k}\}_{i=1}^{M}$ and
$U_{*}^{h}=U_{L}^{h}\oplus U_{J}^{h}$. We also define $g_{i}^{k}:=\mu_{i}^{k}u_{i}^{k}-T^{h}u_{i}^{k}$ with $\mu_{i}^{k}=(\lambda_{i}^{k})^{-1}$. We call $g_{i}^{k}$ the residual of $T^{h}$.

\begin{lemma}\label{Lemma_estimate_UkUJ}
If Assumption \ref{Assumption1} holds, then $\theta_{a}^{k}\leq CH,$ where $C$ depends on $d_{m}^{-}$ and $d_{M}^{+}$.
\end{lemma}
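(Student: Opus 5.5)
The plan is to reduce the subspace gap to eigenvalue errors, and to control those errors through the nestedness of the trial spaces. Since $\mathcal{T}_{h}$ refines $\mathcal{T}_{\widetilde{H}}$, which refines $\mathcal{T}_{H}$, we have $W_{J}^{0}={\rm span}\{u_{i}^{\widetilde{H}}\}_{i=1}^{M}\subset V^{h}$. Moreover $W_{J}^{k}\supset W_{J}^{0}$ for every $k$, because Step 3 only enlarges the trial space. The Courant--Fischer min-max principle then gives, for $i=1,\dots,M$,
\begin{align*}
\lambda_{i}^{h}\leq \lambda_{i}^{k}\leq \lambda_{i}^{0}=\lambda_{i}^{\widetilde{H}}.
\end{align*}
Theorem \ref{theorem_priori}, applied on the mesh $\widetilde{H}\leq H$, gives $\lambda_{i}^{\widetilde{H}}-\lambda_{i}\leq CH^{2}$. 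We also have $\lambda_{i}\leq\lambda_{i}^{h}$. Hence $0\leq \lambda_{i}^{k}-\lambda_{i}^{h}\leq CH^{2}$ for all $i\leq M$, uniformly in $k$. For clusters below $m$ we need the analogous estimate for the lower eigenvalues. I would either invoke Knyazev--Osborn again for them, or use the cruder bound in which the constant depends on further gaps; either way it is uniform in $h$. For $H$ not small the statement is trivial, since the gap is always at most $1$, so we may assume $H$ small enough that the discrete gaps satisfy $\lambda_{m}^{h}-\lambda_{m-1}^{h}\geq d_{m}^{-}/2$ and $\lambda_{M+1}^{h}-\lambda_{M}^{h}\geq d_{M}^{+}/2$.

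Next I would use a trace argument to pass from eigenvalue errors to a gap. Take the $b$-orthonormal Ritz basis $\{u_{i}^{k}\}_{i=1}^{M}$ of $U^{k}$ and expand each vector in the discrete eigenbasis. Then
\begin{align*}
\sum_{i=1}^{M}\lambda_{i}^{k}=\sum_{i=1}^{M}\sum_{j}\lambda_{j}^{h}|b(u_{i}^{k},u_{j}^{h})|^{2}.
\end{align*}
A standard rearrangement (Ky Fan type) bounds this from below by $\sum_{i=1}^{M}\lambda_{i}^{h}+(\lambda_{M+1}^{h}-\lambda_{M}^{h})\sum_{i}\|Q_{R}^{h}u_{i}^{k}\|_{b}^{2}$. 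This yields $\|Q_{R}^{h}v\|_{b}^{2}\leq CH^{2}/d_{M}^{+}$ for unit $v\in U^{k}$.

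To get the estimate in the energy norm, I would use the identity
\begin{align*}
\|Q_{R}^{h}v\|_{a}^{2}-\lambda_{M}^{h}\|Q_{R}^{h}v\|_{b}^{2}\leq a(v,v)-\lambda_{M}^{h}b(v,v)+\lambda_{M}^{h}\|Q_{*}v\|_{b}^{2}-\|Q_{*}v\|_{a}^{2},
\end{align*}
where $Q_{*}=Q_{L}^{h}+Q_{J}^{h}$, together with the same trace sum. This gives $\|Q_{R}^{h}v\|_{a}^{2}\leq CH^{2}$, and hence $\theta_{a,*}^{k}\leq CH$. Applying the identical argument to the first $m-1$ Ritz vectors, with $\sum_{i<m}(\lambda_{i}^{k}-\lambda_{i}^{h})\leq CH^{2}$ and gap $\lambda_{m}^{h}-\lambda_{m-1}^{h}$, shows that $U_{L}^{k}:={\rm span}\{u_{i}^{k}\}_{i<m}$ lies within $a$-gap $CH$ of $U_{L}^{h}$. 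The constant here depends on $d_{m}^{-}$.

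Finally, take $v\in U_{J}^{k}$ with $\|v\|_{a}=1$ and split $v=Q_{L}^{h}v+Q_{J}^{h}v+Q_{R}^{h}v$. The $Q_{R}^{h}$ part is $O(H)$ by the previous step. For the $Q_{L}^{h}$ part, note that $v$ is $a$-orthogonal to $U_{L}^{k}$. Writing $Q_{L}^{h}v=w^{k}+e$ with $w^{k}\in U_{L}^{k}$ and $\|e\|_{a}\leq CH\|Q_{L}^{h}v\|_{a}$, we get
\begin{align*}
\|Q_{L}^{h}v\|_{a}^{2}=a(v,Q_{L}^{h}v)=a(v,e)\leq CH\|Q_{L}^{h}v\|_{a},
\end{align*}
so this part is also $O(H)$. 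Therefore ${\rm dist}_{a}(v,U_{J}^{h})\leq CH$, and since $\dim U_{J}^{k}=\dim U_{J}^{h}$ the gap is symmetric, which gives $\theta_{a}^{k}\leq CH$.

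The main obstacle is the trace step. It must produce a bound that is independent of the internal gaps among $\lambda_{m},\dots,\lambda_{M}$, so it has to be done for whole blocks rather than for individual eigenvectors. It must also be carried out in the $a$-norm and not only the $b$-norm. I would follow the block argument of Knyazev--Osborn to handle this.
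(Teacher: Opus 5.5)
Your proof is correct, but it takes a different route from the paper. The paper's proof is a single application of Theorem 7 of \cite{MR2214751} to $T^{h}$ in the $a$-inner product:
\[
(\theta_{a}^{k})^{2}\leq\Bigl(\frac{1}{\mu_{m-1}^{k}-\mu_{m}^{k}}+\frac{1}{\mu_{M}^{h}-\mu_{M+1}^{h}}\Bigr)\sum_{i=m}^{M}(\mu_{i}^{h}-\mu_{i}^{k}).
\]
This uses a \emph{Ritz} gap on the lower side and the exact gap on the upper side. Monotonicity and $\lambda_{i}^{k}-\lambda_{i}^{h}\leq CH^{2}$ for $i\leq M$ serve only to keep $\mu_{m-1}^{k}-\mu_{m}^{k}$ bounded below and to bound the cluster sum.

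You instead use two elementary block-trace estimates with exact discrete gaps: the first $M$ Ritz vectors against $\lambda_{M+1}^{h}-\lambda_{M}^{h}$, and the first $m-1$ against $\lambda_{m}^{h}-\lambda_{m-1}^{h}$. You then remove the $U_{L}^{h}$-component of $v\in U_{J}^{k}$ using its $a$-orthogonality to $U_{L}^{k}$. This is self-contained and, as a by-product, reproves $\theta_{a,*}^{k}\leq CH$, which the paper imports from \cite{MR4735245}.

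The step you flag as the main obstacle is already settled by your own weight computation. With $w_{j}=\|Q^{k}u_{j}^{h}\|_{b}^{2}\in[0,1]$ and $\sum_{j}w_{j}=M$, one gets $\sum_{j>M}(\lambda_{j}^{h}-\lambda_{M}^{h})w_{j}\leq\sum_{i\leq M}(\lambda_{i}^{k}-\lambda_{i}^{h})$ directly. This gives both the $b$-norm and the $a$-norm bound (the latter via $\sum_{j>M}\lambda_{j}^{h}w_{j}=\sum_{i\leq M}\|Q_{R}^{h}u_{i}^{k}\|_{a}^{2}$), so no appeal to Knyazev--Osborn is needed.

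What the paper's route buys is a sharper intermediate inequality. The error of the lower block $i<m$ never enters quantitatively, so one gets $(\theta_{a}^{k})^{2}\leq C\sum_{i=m}^{M}(\lambda_{i}^{k}-\lambda_{i}^{h})$. This is exactly what is combined with the eigenvalue contraction in the proof of Theorem \ref{Theorem_PJD_conver_rate} to obtain $(\theta_{a}^{k})^{2}\leq C\gamma^{k}$.

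Your route only gives $(\theta_{a}^{k})^{2}\leq C\sum_{i=1}^{M}(\lambda_{i}^{k}-\lambda_{i}^{h})$. That suffices for this lemma. However, the lower Ritz values are only known to be $O(H^{2})$-accurate, not to contract at rate $\gamma$, so your bound would not support that later eigenvector estimate.
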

\begin{proof}
Noting that $\lambda_{m}^{k}>\lambda_{m-1}^{k}$ in Algorithm 3.1 and $\lambda_{i}^{k}$ decreases monotonically and satisfies $\lambda_{i}^{k}-\lambda_{i}^{h}\leq \lambda_{i}^{0}-\lambda_{i}^{h}\leq \lambda_{i}^{H}-\lambda_{i}^{h}\leq CH^{2}$, $i=1,2,...,M$, we know that $\mu_{m-1}^{k}-\mu_{m}^{k}\geq C$ for sufficiently small $H$. Moreover, using Theorem 7 in \cite{MR2214751}, Assumption \ref{Assumption1} and Theorem \ref{theorem_priori}, we get
\begin{align}
\begin{aligned}\label{align_estimate_theta_a}
(\theta^{k}_{a})^{2}
&\leq \left(\frac{1}{\mu^{k}_{m-1}-\mu^{k}_{m}}+\frac{1}{\mu^{h}_{M}-\mu^{h}_{M+1}}\right)\sum_{i=m}^{M}(\mu_{i}^{h}-\mu_{i}^{k})
\leq C\sum_{i=m}^{M}(\mu_{i}^{h}-\mu_{i}^{k})\\
&\leq C\sum_{i=m}^{M}(\lambda_{i}^{k}-\lambda_{i}^{h})
\leq C\sum_{i=m}^{M}(\lambda_{i}^{0}-\lambda_{i}^{h})
\leq C\sum_{i=m}^{M}(\lambda_{i}^{H}-\lambda_{i}^{h})\leq CH^{2},
\end{aligned}
\end{align}
where $C$ depends on $d_{m}^{-}$ and $d_{M}^{+}$.
\end{proof}
\par According to Lemma 4.3 in \cite{MR4735245}, we have
\begin{align}\label{align_estimate_theta_a_1M}
\theta_{a,*}^{k}\leq CH,
\end{align}
We use the $a$-norm of the operator $(|||\cdot|||_{a})$ to  characterize $\theta_{a}^{k}$. We may prove
\begin{align}\label{align_theta_ak_def}
\begin{aligned} 
\theta_{a}^{k}
=|||(I-P_{J}^{k})Q_{J}^{h}|||_{a}
=|||Q_{J}^{h}(I-P_{J}^{k})|||_{a}
=|||Q_{J,\perp}^{h}P_{J}^{k}|||_{a}
=||| P_{J}^{k}Q_{J,\perp}^{h}|||_{a},
\end{aligned}
\end{align}
for sufficiently small $H$, where $P_{J}^{k}$ is the $a(\cdot,\cdot)$-orthogonal projector from $V^{h}$ onto $U_{J}^{k}$ and $Q^{h}_{J,\perp}$ is the $b(\cdot,\cdot)$-orthogonal projector from $V^{h}$ onto $U_{L}^{h}\oplus U_{R}^{h}$.
Similarly, 
\begin{align}\label{align_theta_ak_def_1M}
\begin{aligned} 
\theta_{a,*}^{k}
=|||(I-P^{k})Q_{*}^{h}|||_{a}
=|||Q_{*}^{h}(I-P^{k})|||_{a}
=|||Q_{R}^{h}P^{k}|||_{a}
=||| P^{k}Q_{R}^{h}|||_{a},
\end{aligned}
\end{align}
where $P^{k}$ is the $a(\cdot,\cdot)$-orthogonal projector from $V^{h}$ onto $U^{k}$ and $Q_{*}^{h}$ is the $b(\cdot,\cdot)$-orthogonal projector from $V^{h}$ onto $U_{*}^{h}$.

\par Next, we give two useful lemmas in the following analysis. 
\begin{lemma}\label{Lemma_QLQJ}
If Assumption \ref{Assumption1} holds, then for any $v^{H}\in V^{H}$, it holds that
\begin{align*}
||Q_{L}^{h}Q_{J}^{H}v^{H}||_{b}\leq CH^{2}||Q_{J}^{H}v^{H}||_{b},\ \ \ \ \ ||Q_{R}^{h}Q_{J}^{H}v^{H}||_{b}\leq CH^{2}||Q_{J}^{H}v^{H}||_{b}
\end{align*}
and
\begin{align*}
||Q_{L}^{h}Q_{J}^{H}v^{H}||_{a}\leq CH||Q_{J}^{H}v^{H}||_{a},\ \ \ \ \ ||Q_{R}^{h}Q_{J}^{H}v^{H}||_{a}\leq CH||Q_{J}^{H}v^{H}||_{a}.
\end{align*}
\end{lemma}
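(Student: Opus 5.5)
Since $Q_J^H v^H\in U_J^H$ and all four bounds are linear in $w:=Q_J^Hv^H$, I will prove them for an arbitrary $w\in U_J^H$. The plan is to compare the coarse cluster space $U_J^H$ with the fine cluster space $U_J^h$ through a subspace-gap argument. By Theorem~\ref{theorem_priori} applied on both meshes (with $V^H\subset V^h$ and $H$ small), $\theta_b(U_J,U_J^H)\le CH^2$, $\theta_b(U_J,U_J^h)\le Ch^2\le CH^2$, and $\theta_a(U_J,U_J^H)\le CH$, $\theta_a(U_J,U_J^h)\le CH$. Here $C$ depends only on $d_m^-$ and $d_M^+$.

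\textbf{The $b$-estimates.} I will not chain the gaps through $U_J$ by a triangle inequality. Instead I will use a Davis--Kahan type identity on the fine space. For each $i$ in the low or high part, $\lambda_i^h\,b(w,u_i^h)=a(w,u_i^h)$. Writing $w=\sum_{j=m}^M c_j u_j^H$ and using $a(u_j^H,u_i^h)=\lambda_j^H b(u_j^H,u_i^h)$ (valid because $u_i^h\in V^h$ is tested against $u_j^H$ only through the fine equation, i.e.\ $a(u_j^H,u_i^h)=\lambda_i^h b(u_j^H,u_i^h)$), this gives a projected-residual relation. More concretely, define the coarse residual $\eta:=A^h w-\Lambda w$, where $\Lambda$ acts diagonally, $\Lambda w=\sum_j\lambda_j^H c_j u_j^H$. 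Then for every $i\notin\{m,\dots,M\}$,
\[
(\lambda_i^h-\lambda_j^H)\,b(u_j^H,u_i^h)=b(A^hu_j^H-\lambda_j^H u_j^H,\,u_i^h).
\]
For $i\le m-1$ we have $\lambda_j^H-\lambda_i^h\ge\lambda_m-\lambda_{m-1}-CH^2\ge d_m^-/2$, and for $i\ge M+1$ the analogous gap is at least $d_M^+/2$. Both lower bounds hold uniformly, independent of the internal gaps. The required smallness then follows from $\|Q_{R}^h u_j^H\|$, which needs an estimate of the residual in a negative norm.

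\textbf{Simpler route for the main bound.} A cleaner argument is to write $u_j^H=P^H u_j^H$ in $U_J^H$ and use the gap bounds directly. Since $Q_L^h+Q_R^h=I-Q_J^h$ on $V^h$,
\[
\|(I-Q_J^h)w\|_b=\operatorname{dist}_b(w,U_J^h)\le \operatorname{dist}_b(w,U_J)+\|w\|_b\,\theta_b(U_J,U_J^h)\cdot C.
\]
Here $\operatorname{dist}_b(w,U_J)\le\theta_b(U_J^H,U_J)\|w\|_b\le CH^2\|w\|_b$, because the two spaces have equal dimension and the gap is symmetric. The second term is controlled as follows: let $z$ be the $b$-nearest point of $w$ in $U_J$; then $\|z\|_b\le\|w\|_b$ and $\operatorname{dist}_b(z,U_J^h)\le Ch^2\|z\|_b$. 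Since $\|Q_L^hw\|_b,\|Q_R^hw\|_b\le\|(I-Q_J^h)w\|_b$, the two $b$-estimates follow.

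\textbf{The $a$-estimates.} I repeat the same argument in the $a$-inner product, using that $Q_L^h,Q_J^h,Q_R^h$ are also $a$-orthogonal. This gives $\|(I-Q_J^h)w\|_a\le C(\theta_a(U_J^H,U_J)+\theta_a(U_J,U_J^h))\|w\|_a\le CH\|w\|_a$. For $Q_L^h$ one can even do better: $\|Q_L^hw\|_a^2\le\lambda_{m-1}^h\|Q_L^hw\|_b^2$, together with $\|w\|_b\le\lambda_m^{-1/2}C\|w\|_a$, gives $CH^2$. The expected obstacle is verifying that the gap estimates from Theorem~\ref{theorem_priori} carry over to the coarse space with constants depending only on $d_m^-$ and $d_M^+$, and that the symmetric-gap step is legitimate. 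That step needs $\dim U_J=\dim U_J^H=\dim U_J^h$ and gaps below $1$, which holds for $H$ sufficiently small.
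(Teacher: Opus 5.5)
Your ``simpler route'' is correct and is essentially the paper's argument. The paper combines Theorem~\ref{theorem_priori}, applied on both meshes, with the gap argument of Lemma 4.1 in \cite{MR4735245}: the $b$-norm of $(I-Q_J^h)Q_J^Hv^H$ is bounded via the exact cluster space $U_J$ by $\theta_b(U_J^H,U_J)+\theta_b(U_J,U_J^h)\le CH^2$, the $a$-norm likewise by $\theta_a(U_J^H,U_J)+\theta_a(U_J,U_J^h)\le CH$, and $Q_L^h$, $Q_R^h$ pick out orthogonal pieces of that vector, exactly as you do.

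You should delete the abandoned Davis--Kahan paragraph. The identity $a(u_j^H,u_i^h)=\lambda_j^H b(u_j^H,u_i^h)$ claimed there is false, because $u_i^h\notin V^H$; only $a(u_j^H,u_i^h)=\lambda_i^h b(u_j^H,u_i^h)$ holds, and the two together would force $Q_L^hw=Q_R^hw=0$. That route is also never completed.
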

\begin{proof}
Combining Theorem \ref{theorem_priori} and the proof argument as in Lemma 4.1 in \cite{MR4735245}, we may obtain the above inequalities.
\end{proof}

\begin{lemma}\label{Lemma_lambdaik_Rq}
If Assumption 1 holds, then for sufficiently small $H$, we have
\begin{align*}
\lambda_{i}^{k}-Rq(Q_{J}^{h}u_{i}^{k})\leq CH^{2},\ \ \ \ \ \ \ \ 
\lambda_{i}^{k}-Rq(Q_{J}^{h}u_{i}^{k})\leq C||g_{i}^{k}||_{a}^{2},
\end{align*}
where $m\leq i\leq M$.
\end{lemma}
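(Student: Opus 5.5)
Write $u_{i}^{k}=Q_{L}^{h}u_{i}^{k}+Q_{J}^{h}u_{i}^{k}+Q_{R}^{h}u_{i}^{k}=:x_{L}+x_{J}+x_{R}$, with $\|u_{i}^{k}\|_{b}=1$ and $a(u_{i}^{k},u_{i}^{k})=\lambda_{i}^{k}$. Since the three pieces are orthogonal in both $a(\cdot,\cdot)$ and $b(\cdot,\cdot)$, we get
\begin{align*}
\lambda_{i}^{k}=\|x_{L}\|_{a}^{2}+\|x_{J}\|_{a}^{2}+\|x_{R}\|_{a}^{2},\qquad 1=\|x_{L}\|_{b}^{2}+\|x_{J}\|_{b}^{2}+\|x_{R}\|_{b}^{2}.
\end{align*}
Using $\|x_{J}\|_{a}^{2}=Rq(x_{J})\|x_{J}\|_{b}^{2}$, this gives the identity
\begin{align*}
\lambda_{i}^{k}-Rq(x_{J})=\big(\|x_{L}\|_{a}^{2}-Rq(x_{J})\|x_{L}\|_{b}^{2}\big)+\big(\|x_{R}\|_{a}^{2}-Rq(x_{J})\|x_{R}\|_{b}^{2}\big).
\end{align*}
The $x_{R}$ bracket may be positive, but it is at most $\|x_{R}\|_{a}^{2}$. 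Since $Rq(x_{J})\le\lambda_{M}^{h}$ and $\|x_{R}\|_{a}^{2}\ge\lambda_{M+1}^{h}\|x_{R}\|_{b}^{2}$, it is also nonnegative. The $x_{L}$ bracket is nonpositive because $Rq(x_{J})\ge\lambda_{m}^{h}>\lambda_{m-1}^{h}$; we simply drop it, since we only need an upper bound. It therefore suffices to bound $\|Q_{R}^{h}u_{i}^{k}\|_{a}^{2}$ in two ways: by $CH^{2}$ and by $C\|g_{i}^{k}\|_{a}^{2}$. We also need $x_{J}\neq0$, so that $Rq(x_{J})$ is defined.

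\emph{First estimate.} The function $u_{i}^{k}$ lies in $U^{k}$, and $P^{k}u_{i}^{k}=u_{i}^{k}$. By \eqref{align_theta_ak_def_1M},
\begin{align*}
\|Q_{R}^{h}u_{i}^{k}\|_{a}=\|Q_{R}^{h}P^{k}u_{i}^{k}\|_{a}\le\theta_{a,*}^{k}\|u_{i}^{k}\|_{a}\le CH(\lambda_{i}^{k})^{1/2}.
\end{align*}
Here \eqref{align_estimate_theta_a_1M} is used, together with $\lambda_{i}^{k}\le\lambda_{i}^{0}\le C$. Squaring gives $\lambda_{i}^{k}-Rq(x_{J})\le CH^{2}$. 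The same argument applied to $U_{J}^{k}$ via Lemma \ref{Lemma_estimate_UkUJ} and \eqref{align_theta_ak_def} gives
\begin{align*}
\|(I-Q_{J}^{h})u_{i}^{k}\|_{a}\le CH\|u_{i}^{k}\|_{a}.
\end{align*}
Combined with a Poincar\'e-type bound $\|\cdot\|_{b}^{2}\le\|\cdot\|_{a}^{2}/\lambda_{1}^{h}$, this yields $\|x_{J}\|_{b}^{2}\ge 1-CH^{2}>0$ for sufficiently small $H$. So $Rq(x_{J})$ is well defined, and that is where the smallness of $H$ enters.

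\emph{Second estimate.} Apply $Q_{R}^{h}$ to $g_{i}^{k}=\mu_{i}^{k}u_{i}^{k}-T^{h}u_{i}^{k}$. Since $T^{h}$ commutes with the spectral projectors, on $U_{R}^{h}$ the operator $\mu_{i}^{k}-T^{h}$ has eigenvalues $\mu_{i}^{k}-\mu_{j}^{h}$ for $j\ge M+1$. These satisfy
\begin{align*}
\mu_{i}^{k}-\mu_{j}^{h}\ge\mu_{i}^{k}-\mu_{M+1}^{h}.
\end{align*}
Now $\lambda_{i}^{k}\le\lambda_{M}^{h}+CH^{2}$ because $\lambda_{i}^{k}-\lambda_{i}^{h}\le CH^{2}$ (as in the proof of Lemma \ref{Lemma_estimate_UkUJ}). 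Hence $\mu_{i}^{k}\ge\mu_{M}^{h}-CH^{2}$, so $\mu_{i}^{k}-\mu_{M+1}^{h}\ge\tfrac12(\mu_{M}^{h}-\mu_{M+1}^{h})$ for sufficiently small $H$. This is bounded below by a constant depending only on $d_{M}^{+}$, using Theorem \ref{theorem_priori} and Assumption \ref{Assumption1}. Then, because $Q_{R}^{h}$ is also an $a$-orthogonal projector,
\begin{align*}
\|Q_{R}^{h}u_{i}^{k}\|_{a}\le C\|(\mu_{i}^{k}-T^{h})Q_{R}^{h}u_{i}^{k}\|_{a}=C\|Q_{R}^{h}g_{i}^{k}\|_{a}\le C\|g_{i}^{k}\|_{a},
\end{align*}
and therefore $\lambda_{i}^{k}-Rq(x_{J})\le C\|g_{i}^{k}\|_{a}^{2}$.

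\emph{Main obstacle.} The delicate point is keeping every constant independent of the internal gaps in $\{\lambda_{j}\}_{j=m}^{M}$. The decomposition above is designed so that only the gaps across $\lambda_{m-1}^{h}/\lambda_{m}^{h}$ (through the sign of the $x_{L}$ bracket) and $\lambda_{M}^{h}/\lambda_{M+1}^{h}$ (through the spectral bound on $U_{R}^{h}$) appear. No inversion of $\mu_{i}^{k}-T^{h}$ on $U_{J}^{h}$ is ever needed. The one point to check carefully is the uniform perturbation bound $\lambda_{i}^{k}\le\lambda_{M}^{h}+CH^{2}$, which must hold along all iterations. I would justify it from the monotone decrease of $\lambda_{i}^{k}$ and the choice of initial approximation $\lambda_{i}^{0}<\lambda_{i}^{H}$.
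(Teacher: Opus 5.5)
Your proof is correct and follows the paper's route. You split $u_i^k$ along $U_L^h\oplus U_J^h\oplus U_R^h$ and discard the $U_L^h$ contribution by a sign argument. You then bound the $U_R^h$ contribution once through $|||Q_R^hP^k|||_a=\theta_{a,*}^k\le CH$, and once through the lower bound $\mu_i^k-\mu_{M+1}^h\ge C(d_M^+)>0$ for $\mu_i^k-T^h$ on $U_R^h$. The only difference is cosmetic: the paper starts from $a(g_i^k,u_i^k)=0$ in $T^h$-form and must lower-bound the denominator $\|Q_J^hu_i^k\|_a^2\,Rt(Q_J^hu_i^k)\,\mu_i^k$, whereas your identity with shift $Rq(Q_J^hu_i^k)$ has no denominator and only needs $Q_J^hu_i^k\neq 0$.
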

\begin{proof}
By the fact $a(g_{i}^{k},u_{i}^{k})=0$, we get
\begin{align*}
\left(\mu_{i}^{k}-Rt(Q_{J}^{h}u_{i}^{k})\right)||Q_{J}^{h}u_{i}^{k}||_{a}^{2}
=a((T^{h}-\mu_{i}^{k})Q_{J,\perp}^{h}u_{i}^{k},Q_{J,\perp}^{h}u_{i}^{k}).
\end{align*}
Moreover
\begin{align}\label{nu_de}\begin{aligned}
\lambda_{i}^{k}-Rq(Q_{J}^{h}u_{i}^{k})
=\frac{Rt(Q_{J}^{h}u_{i}^{k})-\mu_{i}^{k}}{Rt(Q_{J}^{h}u_{i}^{k})\mu_{i}^{k}}
=\frac{a((\mu_{i}^{k}-T^{h})Q_{J,\perp}^{h}u_{i}^{k},Q_{J,\perp}^{h}u_{i}^{k})}{||Q_{J}^{h}u_{i}^{k}||_{a}^{2}Rt(Q_{J}^{h}u_{i}^{k})\mu_{i}^{k}}.
\end{aligned}
\end{align}
For the numerator in \eqref{nu_de}, using \eqref{align_estimate_theta_a_1M}, \eqref{align_theta_ak_def_1M} and Assumption \ref{Assumption1}, then for sufficiently small $H$
\begin{align}\label{align_muik_Th_QRh}\begin{aligned}
a((\mu_{i}^{k}-T^{h})Q_{J,\perp}^{h}u_{i}^{k},Q_{J,\perp}^{h}u_{i}^{k})
&\leq a((\mu_{i}^{k}-T^{h})Q_{R}^{h}u_{i}^{k},Q_{R}^{h}u_{i}^{k})\\
&\leq \mu_{i}^{k}||Q_{R}^{h}u_{i}^{k}||_{a}^{2}
=\mu_{i}^{k}||Q_{R}^{h}P^{k}u_{i}^{k}||_{a}^{2}
\leq C\mu_{i}^{k}H^{2}.
\end{aligned}
\end{align}
For the denominator in \eqref{nu_de}, by Lemma \ref{Lemma_estimate_UkUJ} and \eqref{align_theta_ak_def}, we get
\begin{align*}
||Q_{J}^{h}u_{i}^{k}||_{a}^{2}
&=||u_{i}^{k}||_{a}^{2}-||Q_{J,\perp}^{h}u_{i}^{k}||_{a}^{2}
=\lambda_{i}^{k}-||Q_{J,\perp}^{h}P_{J}^{k}u_{i}^{k}||_{a}^{2}\\
&\geq \lambda_{i}^{k}(1-(\theta_{a}^{k})^{2})
\geq \lambda_{i}^{k}(1-CH^{2}),
\end{align*}
which, together with \eqref{nu_de} and \eqref{align_muik_Th_QRh}, yields the first inequality.
Next, we consider the following operator eigenvalue problem
\begin{align*}
Q_{R}^{h}(\mu_{i}^{k}-T^{h})Q_{R}^{h}v=\beta Q_{R}^{h}(\mu_{i}^{k}-T^{h})^{2}Q_{R}^{h}v.
\end{align*}
By the Rayleigh quotient technique, we get
\begin{align}\label{align_operator_eig}
\beta_{max}
:=\max_{w\in U_{R}^{h}, w\ne 0 }\frac{a((\mu_{i}^{k}-T^{h})Q_{R}^{h}w,Q_{R}^{h}w)}{a((\mu_{i}^{k}-T^{h})^{2}Q_{R}^{h}w,Q_{R}^{h}w)}
=\frac{1}{\mu_{i}^{k}-\mu_{M+1}^{h}}.
\end{align}
Therefore, by \eqref{align_muik_Th_QRh} and \eqref{align_operator_eig}, we have
\begin{align*}
a((\mu_{i}^{k}-T^{h})Q_{J,\perp}^{h}u_{i}^{k},Q_{J,\perp}^{h}u_{i}^{k})
&\leq a((\mu_{i}^{k}-T^{h})Q_{R}^{h}u_{i}^{k},Q_{R}^{h}u_{i}^{k})\\
&\leq \beta_{max}a((\mu_{i}^{k}-T^{h})^{2}Q_{R}^{h}u_{i}^{k},Q_{R}^{h}u_{i}^{k})
\leq  \beta_{max}||g_{i}^{k}||_{a}^{2},
\end{align*}
which, together with estimates of the numerator, completes the proof.
\end{proof}

\section{Convergence analysis}\label{sec4}
\par In this section, we give a rigorous convergence analysis for Algorithm 3.1. Using some mathematical tools, including the stable decomposition technique in high frequency subspace, the maximum angle estimate argument in low frequency subspace and constructions of auxiliary eigenvalue problems, we may prove that the convergence factor of the proposed method is independent of the fine mesh size $h$ and the gaps among the interior eigenvalues $\{\lambda_{i}\}_{i=m}^{M}$, but depends on the ratio $\frac{\delta}{H}$ and the gaps $d_{m}^{-}$ and $d_{M}^{+}$. First, we present the main result of this paper in the following.

\begin{theorem}\label{Theorem_PJD_conver_rate}
If Assumption \ref{Assumption1} and Assumption \ref{Assumption2} hold, then
\begin{equation}\label{Equation_Theorem eigenvalues}
\sum_{i=m}^{M}(\lambda_{i}^{k+1}-\lambda_{i}^{h})\leq \gamma \sum_{i=m}^{M}(\lambda_{i}^{k}-\lambda_{i}^{h}),
\end{equation}
and
\begin{align}\label{Equation_Theorem eigenvector_a}
(\theta_{a}^{k})^{2}\leq C\gamma^{k},
\end{align}
where the convergence factor $\gamma$ is independent of the fine mesh size $h$ and the gaps among the eigenvalues $\{\lambda_{i}\}_{i=m}^{M}$. The factor $\gamma=c(H)\rho(\frac{\delta}{H},d_{m}^{-},d_{M}^{+})$, with $c(H)\to 1$, as $H\to 0$
and $0<\rho(\frac{\delta}{H},d_{m}^{-},d_{M}^{+})$ $<1$.
\end{theorem}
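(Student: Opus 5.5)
The proof will work in the $T^{h}$ (inverse) formulation, where the targeted eigenvalues become the interior cluster $\mu_{M}^{h}\le\cdots\le\mu_{m}^{h}$ separated from the rest by gaps of size comparable to $d_{m}^{-},d_{M}^{+}$. The approach is to compare the Rayleigh--Ritz step in $W_{J}^{k+1}$ with a step in an explicitly constructed, well-chosen subspace.

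Since $W_{J}^{k+1}\supset W_{J}^{k}+{\rm span}\{t_{i}^{k}\}$, the min-max principle gives the following: for any $M$-dimensional test subspace $\widetilde W\subset W_{J}^{k+1}$ that contains $U_{L}$-type components approximating $U_{L}^{h}$ (namely ${\rm span}\{u_i^k\}_{i<m}$), the Ritz values $\lambda_{i}^{k+1}$, $m\le i\le M$, are bounded by the Ritz values of $\widetilde W$. I will take
\[
\widetilde W={\rm span}\{u_{i}^{k}\}_{i=1}^{m-1}\oplus{\rm span}\{u_{i}^{k}+\alpha t_{i}^{k}\}_{i=m}^{M}
\]
with a suitable step parameter $\alpha$. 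Applying the trace inequality (sum of Ritz values on the cluster is at most the trace of $A^h$ compressed to a complement of the low part), this reduces \eqref{Equation_Theorem eigenvalues} to bounding $\sum_{i}\big(Rq(u_i^k+\alpha t_i^k)-\lambda_i^h\big)$ plus cross terms. The cross terms are controlled by $\theta_{a,*}^{k}\le CH$ from \eqref{align_estimate_theta_a_1M} and by Lemma \ref{Lemma_estimate_UkUJ}; they contribute only the factor $c(H)=1+O(H)$.

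The core step is a one-vector estimate for each $i$. I will decompose the error $e_i=Q_{J,\perp}^{h}u_i^k$ into its low-frequency part $Q_L^h u_i^k$ and its high-frequency part $Q_R^h u_i^k$.

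\emph{High-frequency part.} Here the operator $A^h-\lambda_i^k$ is positive on $U_R^h$, with lower bound of order $d_M^{+}$. I will use the standard two-level additive Schwarz stable decomposition $w=Q_R^HQ^Hw$-part $+\sum_l w_l$. Assumption \ref{Assumption2} and the bounds \eqref{align_min_max_eigen} then give that the preconditioned operator $(B_i^k)^{-1}(A^h-\lambda_i^k)$ restricted to (an approximation of) $U_R^h$ has condition number bounded by $C(1+H/\delta)$ times a factor depending on $d_M^{+}$. This yields a contraction $\rho_R<1$ for the high-frequency part of the error in the energy norm $a((A^h-\lambda_i^k)\cdot,\cdot)$.

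\emph{Low-frequency part.} This is handled not by the preconditioner but by the Ritz step: the span already contains $u_1^k,\dots,u_{m-1}^k$. I will use the maximum angle argument, together with Lemma \ref{Lemma_QLQJ} to show that the coarse correction $Q_R^H$ does not pollute $U_L^h\oplus U_J^h$ beyond $O(H)$. The indefinite interaction between the low-frequency error and $\lambda_i^k$ (where $A^h-\lambda_i^k<0$) is the main obstacle. To handle it I will introduce two auxiliary eigenvalue problems: one restricted to $U_L^h$-complements, using the gap $d_m^{-}$, and one on $U_R^h$ as in \eqref{align_operator_eig}. These convert the error functional into $\sum_i(\lambda_i^k-Rq(Q_J^hu_i^k))$, controlled via Lemma \ref{Lemma_lambdaik_Rq} by $\|g_i^k\|_a^2$ and $CH^2$ terms.

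Combining the two parts and summing over $i$, the internal gaps never enter, because only the projector $Q_J^h$ onto the whole cluster appears, never individual eigenvectors. This gives the recursion with $\gamma=c(H)\rho(\delta/H,d_m^{-},d_M^{+})$. Finally, \eqref{Equation_Theorem eigenvector_a} follows by iterating \eqref{Equation_Theorem eigenvalues} and inserting the result into the first line of \eqref{align_estimate_theta_a}, which bounds $(\theta_a^k)^2\le C\sum_i(\lambda_i^k-\lambda_i^h)\le C\gamma^k\sum_i(\lambda_i^0-\lambda_i^h)$.
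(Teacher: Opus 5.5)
Your outline follows the paper's architecture in most respects. Your test space is the span used in the paper's first auxiliary problem \eqref{Align_JD_AuxiEigen}, the high-frequency contraction and the low-frequency control through $\theta_{a,*}^k$ are the same, and the eigenvector bound is obtained as in the paper. However, the outline never closes the recursion.

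The error terms you list are not proportional to $\sum_i(\lambda_i^k-\lambda_i^h)$; they are proportional to residuals.
\begin{itemize}
\item Lemma~\ref{Lemma_lambdaik_Rq} bounds $\lambda_i^k-Rq(Q_J^hu_i^k)$ by $C\|g_i^k\|_a^2$. Its other bound, an absolute $CH^2$, would leave an additive constant and destroy geometric convergence.
\item The terms in which the preconditioner maps $Q_J^hu_i^k$ and $Q_L^hu_i^k$ into $U_R^h$ are of size $CH^2\|g_i^k\|_a$.
\item The coupling in the Ritz step between the cluster block and ${\rm span}\{u_j^k\}_{j<m}$ is again residual-type.
\end{itemize}
What your argument actually delivers is
\begin{align*}
\sum_{i=m}^{M}(\lambda_i^{k+1}-\lambda_i^h)\leq(\gamma_{\max}+CH)\sum_{i=m}^{M}(\lambda_i^k-\lambda_i^h)+CH\sum_{i=m}^{M}\|g_i^k\|_a^2,
\end{align*}
and nothing in the outline bounds the last sum by the eigenvalue error. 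For an interior cluster this is not automatic. By the energy identity behind \eqref{Align_eik1_sqrt2}, $\lambda_i^k-Rq(Q_J^hu_i^k)$ is, up to the normalization $\|Q_J^hu_i^k\|_b^2$, the difference of a high-frequency energy and a low-frequency energy. So a small eigenvalue error does not force a small residual, and ``combining the two parts and summing over $i$'' does not yield \eqref{Equation_Theorem eigenvalues}.

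The missing ingredient is the paper's second auxiliary problem \eqref{align_second_auxi_eig}. It uses a fixed step $\tau=1/C_{N_0}$, where $C_{N_0}$ comes from the coloring in Assumption~\ref{Assumption2} via $\|(I-Q^k)t_i^k\|_a^2\le C_{N_0}\,b((B_i^k)^{-1}r_i^k,r_i^k)$. A trace comparison then gives Lemma~\ref{Lemma_estimate_gik}. It uses $B_{11}\ge I$, the Schwarz lower bound $b((B_i^k)^{-1}r_i^k,r_i^k)\ge C\|g_i^k\|_a^2$, and a coupling correction of relative size $\epsilon(H)\le CH^2$. The result is $\sum_i\|g_i^k\|_a^2\le C\sum_i(\lambda_i^k-\check\lambda_i^{k+1})\le C\sum_i(\lambda_i^k-\lambda_i^{k+1})$: the residual is dominated by the progress of the actual Ritz step. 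Now split $\lambda_i^k-\lambda_i^{k+1}=(\lambda_i^k-\lambda_i^h)-(\lambda_i^{k+1}-\lambda_i^h)$ and move the second part to the left. This gives $\gamma=(\gamma_{\max}+CH)/(1+CH)$ up to constants. This absorption, not the cross terms, is what produces the form $c(H)\rho$.

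A smaller point: your trace inequality points the wrong way. By the Ky Fan principle, the sum of the top $M-m+1$ Ritz values of $\widetilde W$ is at least the trace of $A^h$ over any $(M-m+1)$-dimensional subspace of $\widetilde W$, in particular over a complement of ${\rm span}\{u_j^k\}_{j<m}$. An upper bound needs the coupling-over-gap estimate behind Lemma~\ref{Lemma_estimate_lam_kh}, which uses $d_m^-$. Its excess is of the form $CH\sum_i\eta_i^k+CH\sum_i\|g_i^k\|_a^2$, not simply $O(H)$.
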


\begin{remark}
For convenience of the readers, we only give a theoretical analysis for the proposed method for solving the Laplacian eigenvalue problem. It is easy to extend the results to biharmonic eigenvalue problems.   
\end{remark}

\par Before proving Theorem \ref{Theorem_PJD_conver_rate}, we first give the idea of the proof. For notational convenience, we denote by $Q^{k}:V^{h}\to U^{k}$ a $b(\cdot,\cdot)$-orthogonal projector.
Since $U_{J}^{k}\subset U^{k}\subset W_{J}^{k}$, we know that for $i=m,m+1,\cdots,M$,
\begin{align}\label{Align_widetildeuk1}
\widetilde{u}_{i}^{k+1}
=u_{i}^{k}+\alpha_{i}^{k}(I-Q^{k})t_{i}^{k}
=(u_{i}^{k}-\alpha_{i}^{k}Q^{k}t_{i}^{k})+\alpha_{i}^{k}t_{i}^{k}
\in W_{J}^{k+1},
\end{align}
where $\{\alpha_{i}^{k}\}_{i=m}^{M}$ are some undetermined parameters.
Consider the first auxiliary eigenvalue problem as follows
\begin{align}\label{Align_JD_AuxiEigen}
a(\hat{u}_{i}^{k+1},v)=\hat{\lambda}_{i}^{k+1}b(\hat{u}_{i}^{k+1},v)\ \ \ \ \forall\ v\in \widetilde{U}^{k+1}_{J}\oplus {\rm span}\{u_{i}^{k}\}_{i=1}^{m-1},
\end{align}
where $\widetilde{U}^{k+1}_{J}={\rm span}\{\widetilde{u}_{i}^{k+1}\}_{i=m}^{M}.$
By the Courant-Fisher principle, we have
\begin{align}\label{hat_eig_inequ}
\lambda_{i}^{k+1}\leq \hat{\lambda}_{i}^{k+1},\ \ \ \ i=m,m+1,...,M.
\end{align}
Therefore, it suffices to prove that
$\sum_{i=m}^{M}(\hat{\lambda}_{i}^{k+1}-\lambda_{i}^{h})\leq \gamma \sum_{i=m}^{M}(\lambda_{i}^{k}-\lambda_{i}^{h}),$ which may complete the proof of Theorem \ref{Theorem_PJD_conver_rate}.

\subsection{Error reduction in high frequency subspace}
\par In this section, we analyze error reduction in high frequency subspace $U_{R}^{h}$. Define $(w,v)_{E_{i,R}^{k}}:=b((A^{h}-\lambda_{i}^{k})w,v)$ $\forall\ w,v\in V^{h}$ which constructs an inner product $U_{R}^{h}$. We denote by $||v||_{E_{i,R}^{k}}^{2}:=(v,v)_{E_{i,R}^{k}},\ \forall v\in V^{h}$ which satisfies $||v||_{E_{i,R}^{k}}\leq ||v||_{a}\leq C||v||_{E_{i,R}^{k}}$ for all $v\in U_{R}^{h}$.
For notational convenience, we denote by $\widetilde{e}_{i,R}^{k+1}=-Q_{R}^{h}\widetilde{u}_{i}^{k+1}$, $e_{i,L}^{k}:=-Q_{L}^{h}u_{i}^{k},\  u_{i,J}^{k}:=Q_{J}^{h}u_{i}^{k}$ and $ e_{i,R}^{k}:=-Q_{R}^{h}u_{i}^{k}.$

From \eqref{Equation_CorrectionVariable}, \eqref{Align_widetildeuk1} and the decomposition \eqref{Align_fine_decom}, we deduce
\begin{align}\label{Align_error_URh}
\begin{aligned}
\widetilde{e}_{i,R}^{k+1}
&=e_{i,R}^{k}-\alpha_{i}^{k}Q_{R}^{h}(I-Q^{k})(I-Q_{J}^{k})(B_{i}^{k})^{-1}r_{i}^{k}\\
&=e_{i,R}^{k}-\alpha_{i}^{k}Q_{R}^{h}(B_{i}^{k})^{-1}r_{i}^{k}
+\alpha_{i}^{k}Q_{R}^{h}Q_{J}^{k}(B_{i}^{k})^{-1}r_{i}^{k}\\
&\ \ \ \ +\alpha_{i}^{k}Q_{R}^{h}Q^{k}(I-Q_{J}^{k})(B_{i}^{k})^{-1}r_{i}^{k}\\
&=\left(e_{i,R}^{k}-\alpha_{i}^{k}Q_{R}^{h}(B_{i}^{k})^{-1}(A^{h}-\lambda_{i}^{k})e_{i,R}^{k}\right)\\
&\ \ \ \ +\alpha_{i}^{k}Q_{R}^{h}(B_{i}^{k})^{-1}(A^{h}-\lambda_{i}^{k})(u_{i,J}^{k}-e_{i,L}^{k})\\
&\ \ \ \  +\{\alpha_{i}^{k}Q_{R}^{h}Q_{J}^{k}(B_{i}^{k})^{-1}r_{i}^{k}
+\alpha_{i}^{k}Q_{R}^{h}Q^{k}(I-Q_{J}^{k})(B_{i}^{k})^{-1}r_{i}^{k}\}\\
&=:I_{i,1}^{k}+I_{i,2}^{k}+I_{i,3}^{k}.
\end{aligned}
\end{align}
\par For three items in \eqref{Align_error_URh}, the estimation methods are the same as the estimation method of (5.8) in \cite{MR4735245}, and the results are given directly here.

\begin{lemma}\label{Theorem_Gik_norm_estimate}
Under the same assumptions as in Theorem \ref{Theorem_PJD_conver_rate}, for sufficiently small $\alpha_{i}^{k}$, the operator $G_{i,R}^{k}$ satisfies
\begin{align*}
||G_{i,R}^{k}v||_{E_{i,R}^{k}}\leq \left(1-C_{i}\frac{\delta}{H}\right)||v||_{E_{i,R}^{k}}
\ \ \ \ \forall\ v\in U_{R}^{h},\ i=m,m+1,\cdots,M,
\end{align*}
where $G_{i,R}^{k}=I-\alpha_{i}^{k}Q_{R}^{h}(B_{i}^{k})^{-1}(A^{h}-\lambda_{i}^{k})$, $I_{i,1}^{k}=G_{i,R}^{k}e_{i,R}^{k}$ and the positive constants $C_{i}<1$ depend on the gap $d_{M}^{+}$, but not $d_{m}^{-}$.
\end{lemma}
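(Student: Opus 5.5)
The plan is the standard additive Schwarz argument, run on $U_{R}^{h}$ in the energy inner product $(\cdot,\cdot)_{E_{i,R}^{k}}$. The first step is to rewrite $G_{i,R}^{k}$ as $I-\alpha_{i}^{k}T_{i}^{k}$ restricted to $U_{R}^{h}$, where
\begin{align*}
T_{i}^{k}:=Q_{R}^{h}(B_{i}^{k})^{-1}(A^{h}-\lambda_{i}^{k})=Q_{R}^{h}\Big(P_{0,i}^{k}+\sum_{l=1}^{N}P_{l,i}^{k}\Big).
\end{align*}
Here $P_{0,i}^{k}:=(B_{0,i}^{k})^{-1}Q_{R}^{H}Q^{H}(A^{h}-\lambda_{i}^{k})$ and $P_{l,i}^{k}:=(B_{l,i}^{k})^{-1}Q^{(l)}(A^{h}-\lambda_{i}^{k})$. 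For $v\in V^{(l)}$ the definition of $B_{l,i}^{k}$ gives $b((A^{h}-\lambda_{i}^{k})w,v)=b(B_{l,i}^{k}P_{l,i}^{k}w,v)$. So each $P_{l,i}^{k}$ is the projection onto $V^{(l)}$ with respect to the bilinear form $b((A^{h}-\lambda_{i}^{k})\cdot,\cdot)$. This form is positive definite on $V^{(l)}$, because $\lambda_{\min}(B_{l,i}^{k})=\mathcal{O}(H^{-2})\gg\lambda_{i}^{k}$ by \eqref{align_min_max_eigen} for small $H$. The coarse operator acts the same way on $U_{R}^{H}$, where $B_{0,i}^{k}\geq\lambda_{M+1}^{H}-\lambda_{i}^{k}\geq d_{M}^{+}/2$ for $H$ small, since $\lambda_{i}^{k}\leq\lambda_{M}^{h}+CH^{2}$. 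With these facts, $T_{i}^{k}$ is symmetric with respect to $(\cdot,\cdot)_{E_{i,R}^{k}}$ on $U_{R}^{h}$ up to the projection $Q_{R}^{h}$. This holds because $Q_{R}^{h}$ commutes with $A^{h}$ and $(\cdot,\cdot)_{E_{i,R}^{k}}$ is an inner product on $U_{R}^{h}$.

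It then suffices to show two-sided spectral bounds $C_{0}(\delta/H)\,\|v\|_{E}^{2}\leq (T_{i}^{k}v,v)_{E}\leq \omega\|v\|_{E}^{2}$ for $v\in U_{R}^{h}$. Once these hold, choosing $\alpha_{i}^{k}\leq 1/\omega$ gives $\|G_{i,R}^{k}\|\leq 1-\alpha_{i}^{k}C_{0}\delta/H$.

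For the upper bound $\omega$, we use the finite covering Assumption \ref{Assumption2}. The local terms contribute at most $N_{0}$ after a coloring argument, with Cauchy–Schwarz applied in the local energy forms. The coarse term contributes an $\mathcal{O}(1)$ factor: $\|Q_{R}^{H}Q^{H}\|$ is bounded in the energy norm, and $B_{0,i}^{k}|_{U_{R}^{H}}$ is uniformly coercive with a constant depending only on $d_{M}^{+}$.

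The lower bound comes from a stable decomposition of $v\in U_{R}^{h}$. I would take $v_{0}=Q_{R}^{H}\Pi_{H}v$ with $\Pi_{H}$ an $H^{1}$-stable quasi-interpolant, and $v_{l}=I^{h}(\theta_{l}(v-v_{0}))$ with a partition of unity $\{\theta_{l}\}$ satisfying $|\nabla\theta_{l}|\leq C/\delta$. The standard estimate gives
\begin{align*}
\sum_{l}\|v_{l}\|_{a}^{2}\leq C\left(1+\frac{H}{\delta}\right)\|v\|_{a}^{2}.
\end{align*}
The error from dropping $Q_{L}^{H}+Q_{J}^{H}$ in $v_{0}$ must also be controlled. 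Since $v\perp U_{L}^{h}\oplus U_{J}^{h}$, Lemma \ref{Lemma_QLQJ} (and its analogue for $Q_{L}^{H}$) shows that $(Q_{L}^{H}+Q_{J}^{H})\Pi_{H}v$ is $\mathcal{O}(H)\|v\|_{a}$ in the $a$-norm. This leaves only an $\mathcal{O}(H)$ perturbation, which is absorbed for small $H$.

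Next, convert $\|\cdot\|_{a}$ to $\|\cdot\|_{E}$. On $U_{R}^{h}$ this uses $\|v\|_{E}\leq\|v\|_{a}\leq C(d_{M}^{+})\|v\|_{E}$, and on the local and coarse pieces it uses the coercivity noted above. Lions' lemma then gives $(T_{i}^{k}v,v)_{E}\geq C\,\delta/H\,\|v\|_{E}^{2}$. The projection $Q_{R}^{h}$ in front of the sum must also be handled: we have $(Q_{R}^{h}w,v)_{E}=(w,v)_{E}$ for $v\in U_{R}^{h}$, because $A^{h}-\lambda_{i}^{k}$ preserves the $b$-orthogonal spectral splitting.

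The constants depend only on $d_{M}^{+}$, through $\lambda_{M+1}^{h}-\lambda_{i}^{k}$ and $\lambda_{M+1}^{H}-\lambda_{i}^{k}$, and never on $d_{m}^{-}$, since everything lives in $U_{R}^{h}$.

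The main obstacle I expect is the indefiniteness of $A^{h}-\lambda_{i}^{k}$ off $U_{R}^{h}$. The local pieces $v_{l}$ and the image of $P_{l,i}^{k}$ are not in $U_{R}^{h}$, so $(\cdot,\cdot)_{E}$ is not a priori positive on them. I would handle this through the local coercivity for small $H$, and through an $\mathcal{O}(H)$ perturbation argument mirroring (5.8) in \cite{MR4735245} for the components in $U_{L}^{h}\oplus U_{J}^{h}$.
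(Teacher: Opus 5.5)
Your proposal is correct and takes essentially the same route as the paper. The paper gives no in-text proof here but defers to the estimate (5.8) of \cite{MR4735245}; the structure of that argument shows up in its later bound for $J_{RR}$. There, $Q_{R}^{h}(B_{i}^{k})^{-1}(A^{h}-\lambda_{i}^{k})=Q_{R}^{h}\sum_{l=0}^{N}T_{l,i}^{k}$ is a sum of $(\cdot,\cdot)_{E_{i,R}^{k}}$-projections onto $U_{R}^{H}$ and the $V^{(l)}$, with lower spectral bound $1/(C_{i}(1+H/\delta))$ on $U_{R}^{h}$ from a stable decomposition and an $N_{0}$-dependent upper bound from finite covering.

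The indefiniteness you flag needs no extra perturbation argument, only local coercivity. The identity $||v||_{E_{i,R}^{k}}^{2}=\sum_{l}(P_{l,i}^{k}v,v_{l})_{E_{i,R}^{k}}$ and the bound $(v,P_{l,i}^{k}v)_{E_{i,R}^{k}}\leq C(||v||_{a,\Omega_{l}'}+\lambda_{i}^{k}||v||_{b,\Omega_{l}'})||P_{l,i}^{k}v||_{E_{i,R}^{k}}$ use positivity of the form only on each $V^{(l)}$ and on $U_{R}^{H}$ separately. Also, estimate $Q_{R}^{h}$ against the first $M$ coarse eigenfunctions as a single block, rather than citing Lemma \ref{Lemma_QLQJ} as stated; this keeps your constants free of $d_{m}^{-}$.
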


\begin{lemma}\label{Lemma_Estimate_I2ik_I3ik_I4ik}
Under the same assumptions as in Theorem \ref{Theorem_PJD_conver_rate}, we have
\begin{align*}
||I_{i,2}^{k}||_{E_{i,R}^{k}}&\leq CH^{2}||g_{i}^{k}||_{a},
\end{align*}
and
\begin{align*}
||I_{i,3}^{k}||_{E_{i,R}^{k}}&\leq CH||e_{i,R}^{k}||_{E_{i,R}^{k}}+CH^{2}||g_{i}^{k}||_{a}.
\end{align*}
\end{lemma}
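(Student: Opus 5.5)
We estimate $I_{i,2}^{k}$ and $I_{i,3}^{k}$ separately. Both arguments follow the pattern of the bound for (5.8) in \cite{MR4735245}. The structural facts used are:
\begin{itemize}
\item $Q_{R}^{h}$ commutes with $A^{h}$;
\item $Q_{R}^{h}$ kills $U_{L}^{h}\oplus U_{J}^{h}$;
\item the coarse part of $(B_{i}^{k})^{-1}$ only sees $Q_{R}^{H}Q^{H}$.
\end{itemize}
Throughout we use $\|v\|_{E_{i,R}^{k}}\le\|v\|_{a}$ on $U_{R}^{h}$. This reduces every estimate to an $a$-norm bound.

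\textbf{The term $I_{i,2}^{k}$.} Write $w:=(A^{h}-\lambda_{i}^{k})(u_{i,J}^{k}-e_{i,L}^{k})$. This lies in $U_{L}^{h}\oplus U_{J}^{h}$, because the spectral subspaces are $A^{h}$-invariant. We split $Q_{R}^{h}(B_{i}^{k})^{-1}w$ into its coarse and local parts.

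\emph{Local part.} For each $l$, the piece $(B_{l,i}^{k})^{-1}Q^{(l)}w$ is estimated by the standard overlapping Schwarz argument, with the coloring of Assumption \ref{Assumption2}. Here the key point is that $\lambda_{\min}(B_{l,i}^{k})=\mathcal{O}(H^{-2})$ by \eqref{align_min_max_eigen}. Hence
\[
\|(B_{l,i}^{k})^{-1}Q^{(l)}w\|_{a}^{2}\le CH^{2}\|Q^{(l)}w\|_{b}^{2}.
\]

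\emph{Coarse part.} For $(B_{0,i}^{k})^{-1}Q_{R}^{H}Q^{H}w$ we use $\lambda_{\min}(B_{0,i}^{k}|_{U_{R}^{H}})=\lambda_{M+1}^{H}-\lambda_{i}^{k}\ge C$. We also use that $Q_{R}^{H}Q^{H}$ applied to a function in $U_{L}^{h}\oplus U_{J}^{h}$ is small, of order $H^{2}$ in $b$, by the duality counterpart of Lemma \ref{Lemma_QLQJ}.

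\emph{Converting to $\|g_{i}^{k}\|_{a}$.} It remains to bound $\|w\|_{b}$, and more precisely its portion that survives the projections, by $\|g_{i}^{k}\|_{a}$. Since $g_{i}^{k}=\mu_{i}^{k}T^{h}(A^{h}-\lambda_{i}^{k})u_{i}^{k}$, we have
\[
\|(A^{h}-\lambda_{i}^{k})v\|_{b}\le C\|Q(\mu_{i}^{k}-T^{h})u\|_{a}
\]
on the low and middle spectral block $U_{L}^{h}\oplus U_{J}^{h}$, where $T^{h}$ is bounded below by $\mu_{M}^{h}$. Combined with the small-angle estimates \eqref{align_estimate_theta_a_1M} and Lemma \ref{Lemma_estimate_UkUJ}, this gives $\|I_{i,2}^{k}\|_{E_{i,R}^{k}}\le CH^{2}\|g_{i}^{k}\|_{a}$. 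The constants are controlled only by $\lambda_{M+1}^{h}-\lambda_{i}^{k}$ and $\lambda_{i}^{k}-\lambda_{m-1}$. They do not depend on internal gaps, because only $Q_{J}^{h}$ as a whole is used.

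\textbf{The term $I_{i,3}^{k}$.} Both pieces of $I_{i,3}^{k}$ have the form $Q_{R}^{h}Q^{k}(\cdot)$, since $Q_{J}^{k}=Q^{k}Q_{J}^{k}$. By \eqref{align_theta_ak_def_1M} and \eqref{align_estimate_theta_a_1M}, $|||Q_{R}^{h}P^{k}|||_{a}\le CH$. Converting the $b$-projector $Q^{k}$ to $P^{k}$ on the finite-dimensional space $U^{k}$ costs at most a constant, because the Rayleigh quotients there are bounded. So it suffices to bound $\|Q^{k}(B_{i}^{k})^{-1}r_{i}^{k}\|_{a}$, which we do by testing against basis functions $u_{j}^{k}$, $j\le M$.

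We decompose $r_{i}^{k}=-(A^{h}-\lambda_{i}^{k})(u_{i,J}^{k}-e_{i,L}^{k}-e_{i,R}^{k})$ by the same splitting as in \eqref{Align_error_URh}.
\begin{itemize}
\item The $e_{i,R}^{k}$ part gives $C\|e_{i,R}^{k}\|_{E_{i,R}^{k}}$, using the stability of the two-level preconditioner on $U_{R}^{h}$ from Lemma \ref{Theorem_Gik_norm_estimate}.
\item The remaining part gives $CH\|g_{i}^{k}\|_{a}$, exactly as in the treatment of $I_{i,2}^{k}$.
\end{itemize}
Multiplying by the factor $CH$ from $|||Q_{R}^{h}P^{k}|||_{a}$ yields the claimed bound.

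\textbf{Main obstacle.} The delicate step is the coarse-space contribution in $I_{i,2}^{k}$. There we must show that $Q_{R}^{H}Q^{H}$ annihilates $U_{L}^{h}\oplus U_{J}^{h}$ up to $\mathcal{O}(H^{2})$ with constants free of internal gaps. I would first try to prove this by the duality trick of Lemma \ref{Lemma_QLQJ}, applied to the whole block $U_{J}^{h}$ rather than to individual eigenfunctions, together with Theorem \ref{theorem_priori} for the pair $(U_{J}^{H},U_{J}^{h})$.
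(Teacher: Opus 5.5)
Your proposal is correct and follows essentially the paper's route. For $I_{i,3}^{k}$, the remark after the lemma likewise multiplies a gap factor by $\|(B_i^k)^{-1}r_i^k\|_a$. The paper treats the $Q_J^k$-piece with $\theta_a^k$ (via $Q_R^hQ_{J,\perp}^hP_J^kQ_J^k=Q_R^hQ_J^k$) and the other piece with $\theta_{a,*}^k$. Your identity $Q_J^k=Q^kQ_J^k$ in fact collapses the whole term to $I_{i,3}^k=\alpha_i^kQ_R^hQ^k(B_i^k)^{-1}r_i^k$, so $\theta_{a,*}^k$ alone suffices. For $I_{i,2}^k$ the paper only defers to estimate (5.8) of \cite{MR4735245}. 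Your coarse/local splitting is a sound reconstruction, together with $\|w\|_b=\lambda_i^k\|A^hQ_*^hg_i^k\|_b\le C\theta_{a,*}^k\|g_i^k\|_a$, which uses $(I-P^k)g_i^k=g_i^k$.

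Two points need tightening.

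First, the duality argument in your last paragraph must be run on the whole block $U_L^h\oplus U_J^h$, not only on $U_J^h$. For the component $w_L:=-(A^h-\lambda_i^k)e_{i,L}^k$, the available bound is only $\|w_L\|_b\le C\|e_{i,L}^k\|_a\le CH\|g_i^k\|_a$ (from the proof of Theorem \ref{Lemma_eiLk_estimate}). So without an $\mathcal{O}(H^2)$ bound for $Q_R^HQ^H$ on $U_L^h$, your coarse contribution would only be $\mathcal{O}(H)\|g_i^k\|_a$. The fix is direct. Set $U_*^H:=U_L^H\oplus U_J^H$. The $b$-orthogonal projector onto $U_*^H$ annihilates $U_R^H$, and $\dim U_*^H=\dim U_*^h$. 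Hence $\|Q_*^hz\|_b\le\theta_b(U_*^H,U_*^h)\|z\|_b\le CH^2\|z\|_b$ for $z\in U_R^H$, by the a priori theory for the block $\lambda_1,\dots,\lambda_M$, which is separated by $d_M^+$. Duality then gives $\|Q_R^HQ^Hv\|_b\le CH^2\|v\|_b$ for all $v\in U_*^h$.

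Second, the bound $\|(B_i^k)^{-1}(A^h-\lambda_i^k)e_{i,R}^k\|_a\le C\|e_{i,R}^k\|_{E_{i,R}^k}$ needed for $I_{i,3}^k$ does not follow from Lemma \ref{Theorem_Gik_norm_estimate}. That lemma controls only the $Q_R^h$-component in the $E_{i,R}^k$-norm, whereas here the full correction enters through $Q^k$. The bound is instead the standard upper estimate for the additive Schwarz operator. It follows from coercivity of $a(\cdot,\cdot)-\lambda_i^kb(\cdot,\cdot)$ on each $V^{(l)}$ and on $U_R^H$, together with the coloring in Assumption \ref{Assumption2}.
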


\begin{remark}
We estimate the third term $I_{i,3}^{k}$ in \eqref{Align_error_URh}. By Lemma \ref{Lemma_estimate_UkUJ}, \eqref{align_estimate_theta_a_1M}, \eqref{align_theta_ak_def}, \eqref{align_theta_ak_def_1M}
and the fact that $Q_{R}^{h}Q_{J,\perp}^{h} P_{J}^{k}Q_{J}^{k}=Q_{R}^{h}Q_{J}^{k}$, we have
\begin{align*}
||Q_{R}^{h}Q_{J,\perp}^{h} P_{J}^{k}Q_{J}^{k}(B_{i}^{k})^{-1}r_{i}^{k}||_{a}
&\leq C||Q_{J,\perp}^{h}P_{J}^{k}Q_{J}^{k}(B_{i}^{k})^{-1}r_{i}^{k}||_{a}\\
&\leq C\theta_{a}^{k}||Q_{J}^{k}(B_{i}^{k})^{-1}r_{i}^{k}||_{a}
\leq CH||(B_{i}^{k})^{-1}r_{i}^{k}||_{a},
\end{align*}
and
\begin{align*}
||Q_{R}^{h}Q^{k}(I-Q_{J}^{k})(B_{i}^{k})^{-1}r_{i}^{k}||_{a}
&= ||Q_{R}^{h}P^{k}Q^{k}(I-Q_{J}^{k})(B_{i}^{k})^{-1}r_{i}^{k}||_{a}\\
&\leq C\theta_{a,*}^{k}||Q^{k}(I-Q_{J}^{k})(B_{i}^{k})^{-1}r_{i}^{k}||_{a}
\leq CH||(B_{i}^{k})^{-1}r_{i}^{k}||_{a},
\end{align*}
which, together with the techniques for estimating $||(B_{i}^{k})^{-1}r_{i}^{k}||_{a}$ in \cite{MR4735245}, yields the results of Lemma \ref{Lemma_Estimate_I2ik_I3ik_I4ik}.
\end{remark}

\par Now we give a main result in high frequency subspace $U_{R}^{h}$. 
\begin{theorem}\label{Theorem_eirk1_eirk_gik}
Under the same assumptions as in Theorem \ref{Theorem_PJD_conver_rate}, for sufficiently small $\alpha_{i}^{k}$, then
\begin{align*}
||\widetilde{e}_{i,R}^{k+1}||_{E_{i,R}^{k}}
\leq \gamma_{i}||e_{i,R}^{k}||_{E_{i,R}^{k}}+
CH^{2}||g_{i}^{k}||_{a},\ \ \ \ i=m,m+1,...,M,
\end{align*}
where $\gamma_{i}=1-C_{i}\frac{\delta}{H}+CH$, $C_{i}$ is defined in Lemma \ref{Theorem_Gik_norm_estimate}.
\end{theorem}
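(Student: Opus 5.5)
The proof will be a direct combination of the decomposition \eqref{Align_error_URh} with Lemma \ref{Theorem_Gik_norm_estimate} and Lemma \ref{Lemma_Estimate_I2ik_I3ik_I4ik}. By \eqref{Align_error_URh}, $\widetilde{e}_{i,R}^{k+1}=I_{i,1}^{k}+I_{i,2}^{k}+I_{i,3}^{k}$. Each term lies in $U_{R}^{h}$: the first is $Q_{R}^{h}$ applied to something plus $e_{i,R}^{k}\in U_R^h$, and the other two carry a leading $Q_{R}^{h}$. On $U_{R}^{h}$ the form $(\cdot,\cdot)_{E_{i,R}^{k}}$ is a genuine inner product, because $\lambda_{i}^{k}\le\lambda_{i}^{0}<\lambda_{i}^{H}$. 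For sufficiently small $H$ this is below $\lambda_{M+1}^{h}$, since $\lambda_{i}^{H}-\lambda_{i}^{h}\le CH^{2}$ and $d_{M}^{+}>0$. Hence the triangle inequality in $\|\cdot\|_{E_{i,R}^{k}}$ applies and gives
\begin{align*}
\|\widetilde{e}_{i,R}^{k+1}\|_{E_{i,R}^{k}}\le \|I_{i,1}^{k}\|_{E_{i,R}^{k}}+\|I_{i,2}^{k}\|_{E_{i,R}^{k}}+\|I_{i,3}^{k}\|_{E_{i,R}^{k}}.
\end{align*}

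For the first term I will use $I_{i,1}^{k}=G_{i,R}^{k}e_{i,R}^{k}$ with $e_{i,R}^{k}\in U_{R}^{h}$. Lemma \ref{Theorem_Gik_norm_estimate}, valid for sufficiently small $\alpha_{i}^{k}$, then gives $\|I_{i,1}^{k}\|_{E_{i,R}^{k}}\le(1-C_{i}\frac{\delta}{H})\|e_{i,R}^{k}\|_{E_{i,R}^{k}}$. For the remaining two terms, Lemma \ref{Lemma_Estimate_I2ik_I3ik_I4ik} gives $\|I_{i,2}^{k}\|_{E_{i,R}^{k}}\le CH^{2}\|g_{i}^{k}\|_{a}$ and $\|I_{i,3}^{k}\|_{E_{i,R}^{k}}\le CH\|e_{i,R}^{k}\|_{E_{i,R}^{k}}+CH^{2}\|g_{i}^{k}\|_{a}$. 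Summing the three bounds gives the coefficient $1-C_{i}\frac{\delta}{H}+CH=\gamma_{i}$ in front of $\|e_{i,R}^{k}\|_{E_{i,R}^{k}}$ and $CH^{2}\|g_{i}^{k}\|_{a}$ for the residual part. This is exactly the claim.

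The argument is essentially bookkeeping. The one point needing care is that the constants must be uniform in $k$ and independent of $h$ and of the internal gaps. The constants in the two lemmas depend only on $d_{M}^{+}$ (and $d_{m}^{-}$ for $I_{i,3}^{k}$, through Lemma \ref{Lemma_estimate_UkUJ} and \eqref{align_estimate_theta_a_1M}). The norm equivalence $\|v\|_{E_{i,R}^{k}}\le\|v\|_a\le C\|v\|_{E_{i,R}^{k}}$ on $U_{R}^{h}$ holds with $C$ depending on $\lambda_{M+1}^{h}/(\lambda_{M+1}^{h}-\lambda_{i}^{k})$. This ratio is bounded in terms of $d_{M}^{+}$ once $H$ is small, because $\lambda_{i}^{k}$ is monotonically decreasing in $k$ and bounded by $\lambda_{i}^{H}$. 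I will state this explicitly so that the conversion between the $a$-norm estimates used inside the lemmas and the $E_{i,R}^{k}$-norm is legitimate.
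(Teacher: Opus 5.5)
Your proposal is correct and matches the paper's proof: apply the triangle inequality in $\|\cdot\|_{E_{i,R}^{k}}$ to the splitting \eqref{Align_error_URh}, bound $I_{i,1}^{k}=G_{i,R}^{k}e_{i,R}^{k}$ by Lemma \ref{Theorem_Gik_norm_estimate} and $I_{i,2}^{k}, I_{i,3}^{k}$ by Lemma \ref{Lemma_Estimate_I2ik_I3ik_I4ik}, and collect terms. Your extra checks fill in details the paper leaves implicit: that all three pieces lie in $U_{R}^{h}$, where $\|\cdot\|_{E_{i,R}^{k}}$ is a genuine norm because $\lambda_{i}^{k}<\lambda_{M+1}^{h}$ for small $H$, and that the constants are uniform in $k$.
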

\begin{proof}
Combining \eqref{Align_error_URh}, Lemma \ref{Theorem_Gik_norm_estimate} and Lemma \ref{Lemma_Estimate_I2ik_I3ik_I4ik}, we get the result of this theorem.
\end{proof}

\subsection{Error estimate in low frequency subspace}
\par In this subsection, we estimate the error in low frequency subspace $U_{L}^{h}$. For notational convenience, we define
$(v,w)_{E_{i,L}^{k}}:=\lambda_{i}^{k}b(v,w)-a(v,w),\ i=m,m+1,\cdots,M$ for all $v,w\in V^{h}$. If Assumption \ref{Assumption1} holds, it is easy to see that $(\cdot,\cdot)_{E_{i,L}^{k}}$ constructs an inner product over $U_{L}^{h}$ for sufficiently small $H$. We denote by $||\cdot||_{E_{i,L}^{k}}^{2}:=(\cdot,\cdot)_{E_{i,L}^{k}}$. Now we give an estimate for $||e_{i,L}^{k}||_{E_{i,L}^{k}}$ in the following.
\begin{theorem}\label{Lemma_eiLk_estimate}
If Assumption \ref{Assumption1} holds, for sufficiently small $H$, we have
\begin{align*}
||e_{i,L}^{k}||_{E_{i,L}^{k}}\leq CH||g_{i}^{k}||_{a},\ \ \ \ i=m,m+1,...,M.
\end{align*}
\end{theorem}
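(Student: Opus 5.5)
Our plan is to exploit that $u_i^k$ is a Ritz vector in $U^k$, which contains the span $\{u_j^k\}_{j=1}^{m-1}$. Because $U^k$ is $\theta_{a,*}^k$-close to $U_*^h=U_L^h\oplus U_J^h$, the component $e_{i,L}^k$ is governed by the residual $g_i^k$ and not by $\theta_a^k$. The key identity is the $b$-projection of the residual equation onto $U_L^h$. Since $r_i^k=\lambda_i^k u_i^k-A^h u_i^k$ and $Q_L^h$ commutes with $A^h$, we have
\begin{align*}
(\lambda_i^k-A^h)e_{i,L}^k=-Q_L^h r_i^k .
\end{align*}
Taking the $b$-inner product with $e_{i,L}^k$ then gives
\begin{align*}
\|e_{i,L}^k\|_{E_{i,L}^k}^2=-b(r_i^k,e_{i,L}^k).
\end{align*}
Next, the Ritz--Galerkin condition from Step 3 says that $r_i^k$ is $b$-orthogonal to $W_J^k\supset U^k$. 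Hence $b(r_i^k,e_{i,L}^k)=b(r_i^k,(I-Q^k)e_{i,L}^k)$, and this is where a factor of $H$ should enter.

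Rewrite in $T^h$ language: $r_i^k=\lambda_i^kA^h g_i^k$, so $b(r_i^k,w)=\lambda_i^k a(g_i^k,w)$. Thus
\begin{align*}
\|e_{i,L}^k\|_{E_{i,L}^k}^2=-\lambda_i^k\, a\bigl(g_i^k,(I-P^k)e_{i,L}^k\bigr),
\end{align*}
where the $a$-projector $P^k$ onto $U^k$ may be used instead of $Q^k$. This is legitimate because $a(g_i^k,v)=0$ for all $v\in U^k$: the Ritz property in $W_J^k$ means $b(r_i^k,v)=0$ for $v\in U^k$. Now $e_{i,L}^k\in U_L^h\subset U_*^h$. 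By \eqref{align_theta_ak_def_1M},
\begin{align*}
\|(I-P^k)e_{i,L}^k\|_a\le\theta_{a,*}^k\|e_{i,L}^k\|_a\le CH\|e_{i,L}^k\|_a,
\end{align*}
using \eqref{align_estimate_theta_a_1M}. Cauchy--Schwarz then yields
\begin{align*}
\|e_{i,L}^k\|_{E_{i,L}^k}^2\le CH\|g_i^k\|_a\|e_{i,L}^k\|_a .
\end{align*}

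Finally, compare the norms on $U_L^h$. For $v=\sum_{j<m}c_ju_j^h$ we have
\begin{align*}
\|v\|_{E_{i,L}^k}^2=\sum_{j<m}(\lambda_i^k-\lambda_j^h)c_j^2\ge\frac{\lambda_i^k-\lambda_{m-1}^h}{\lambda_{m-1}^h}\|v\|_a^2 .
\end{align*}
Since $\lambda_i^k\ge\lambda_m^h$ (Ritz values bound from above) and $\lambda_m^h-\lambda_{m-1}^h\ge d_m^-/2$ for small $h$ by Theorem \ref{theorem_priori}, we obtain $\|v\|_a\le C\|v\|_{E_{i,L}^k}$ with $C$ depending only on $d_m^-$. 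This also justifies that the form is an inner product on $U_L^h$. Dividing by $\|e_{i,L}^k\|_{E_{i,L}^k}$ gives the claim.

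The main obstacle is the middle step: justifying $a(g_i^k,v)=0$ on all of $U^k$ rather than merely on $\mathrm{span}\{u_i^k\}$. This requires that $(\lambda_j^k,u_j^k)_{j\le M}$ are Ritz pairs of a common space $W_J^k$. For $k=0$, Step 1 should be read as providing $a$- and $b$-orthogonal approximate eigenpairs of a coarse problem, so that the Galerkin orthogonality holds on $U^0$. If this fails for $k=0$, I would instead bound $\|(I-P^k)e_{i,L}^k\|_a$ together with the $U^k$-component separately using Lemma \ref{Lemma_QLQJ}-type arguments. The remaining subtlety is ensuring that all constants are independent of the internal gaps. This holds because only $\lambda_i^k-\lambda_{m-1}^h$ and $\theta_{a,*}^k$ appear.
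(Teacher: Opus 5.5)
Your proof is correct and follows essentially the same route as the paper. Both arguments use the identity $\|e_{i,L}^k\|_{E_{i,L}^k}^2=\pm\lambda_i^k a(g_i^k,e_{i,L}^k)$, insert $(I-P^k)$ via the $a$-orthogonality of $g_i^k$ to $U^k$, bound $\theta_{a,*}^k\le CH$, and compare norms on $U_L^h$ through $\|v\|_a^2\le\frac{\lambda_{m-1}^h}{\lambda_i^k-\lambda_{m-1}^h}\|v\|_{E_{i,L}^k}^2$; the paper reaches this last inequality in two steps, via $\|e_{i,L}^k\|_b$ and $Rq(e_{i,L}^k)\le\lambda_{m-1}^h$. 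Two small remarks: for $k=0$ the Galerkin orthogonality follows directly from the relations imposed in Step 1, so no fallback is needed, and there is a harmless sign slip, since in fact $r_i^k=-\lambda_i^kA^hg_i^k$.
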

\begin{proof}
By the facts that $(I-P_{J}^{k})g_{i}^{k}=g_{i}^{k}$ and $Q_{*}^{h}Q_{L}^{h}=Q_{L}^{h}$, we deduce
\begin{align}\label{Align_b_L1}
\begin{aligned}
||e_{i,L}^{k}||_{E_{i,L}^{k}}^{2}
&=b((\lambda_{i}^{k}-A^{h})Q_{L}u_{i}^{k},e_{i,L}^{k})
=\lambda_{i}^{k}a((T^{h}-\mu_{i}^{k})u_{i}^{k},e_{i,L}^{k})\\
&=-\lambda_{i}^{k}a((I-P_{k})g_{i}^{k},Q_{*}^{h}e_{i,L}^{k})\\
&\leq C||g_{i}^{k}||_{a}||(I-P_{k})Q_{*}^{h}e_{i,L}^{k}||_{a}
\leq C\theta_{a,*}^{k}||g_{i}^{k}||_{a}||e_{i,L}^{k}||_{a}.
\end{aligned}
\end{align}
Noting that
$b((\lambda_{i}^{k}-A^{h})e_{i,L}^{k},e_{i,L}^{k})=\left(\lambda_{i}^{k}-Rq(e_{i,L}^{k})\right)||e_{i,L}^{k}||_{b}^{2},$ we get
\begin{align*}
||e_{i,L}^{k}||_{a}^{2}
\leq C||e_{i,L}^{k}||_{b}^{2}
&\leq \frac{C\theta_{a,*}^{k}||g_{i}^{k}||_{a}||e_{i,L}^{k}||_{a}}{\lambda_{i}^{k}-Rq(e_{i,L}^{k})}
\leq \frac{C\theta_{a,*}^{k}||g_{i}^{k}||_{a}||e_{i,L}^{k}||_{a}}{\lambda_{i}^{k}-\lambda_{m-1}^{h}}.
\end{align*}
That is $||e_{i,L}^{k}||_{a}\leq C\theta_{a,*}^{k}||g_{i}^{k}||_{a}$. Therefore, following \eqref{Align_b_L1}, we have
\begin{align*}
||e_{i,L}^{k}||_{E_{i,L}^{k}}^{2}
\leq \lambda_{i}^{k}\theta_{a,*}^{k}||g_{i}^{k}||_{a}||e_{i,L}^{k}||_{a}
\leq C(\theta_{a,*}^{k})^{2}||g_{i}^{k}||_{a}^{2},
\end{align*}
which, together with \eqref{align_estimate_theta_a_1M}, completes the proof.
\end{proof}

\subsection{Error analysis for interior eigenvalues}
\par In this subsection, we focus on analyzing the total error (summation form of the error of the targeted eigenvalues) for discrete interior eigenvalues $\{\lambda_{i}^{h}\}_{i=m}^{M}$. The results of the high frequency analysis (Theorem \ref{Theorem_eirk1_eirk_gik}) and the low frequency analysis (Theorem \ref{Lemma_eiLk_estimate}) are very important for the analysis in this subsection.
\par Based on Theorem \ref{Theorem_eirk1_eirk_gik}, for any $i=m,m+1,...,M,$ we have
\begin{align}\label{Align_eik1_sqrt1}
\begin{aligned}
||\widetilde{e}_{i,R}^{k+1}||^{2}_{E_{i,R}^{k}}
\leq (\gamma_{i}^{2}+CH^{2})||e_{i,R}^{k}||_{E_{i,R}^{k}}^{2}+CH^{2}||g_{i}^{k}||_{a}^{2}.
\end{aligned}
\end{align}
Since $b(r_{i}^{k},u_{i}^{k})=0$ and $u_{i}^{k}=u_{i,J}^{k}-e_{i,L}^{k}-e_{i,R}^{k}, i=m,m+1,\cdots,M$, we get
\begin{align}\label{Align_eik1_sqrt2}
||e_{i,R}^{k}||_{E_{i,R}^{k}}^{2}
=||e_{i,L}^{k}||_{E_{i,L}^{k}}^{2}+(\lambda_{i}^{k}-Rq(u_{i,J}^{k}))||u_{i,J}^{k}||_{b}^{2},
\end{align}
which, together with \eqref{Align_eik1_sqrt1} and Thorem \ref{Lemma_eiLk_estimate}, yields
\begin{align}\label{upper_bound_w_eiRk1}
||\widetilde{e}_{i,R}^{k+1}||^{2}_{E_{i,R}^{k}}
&\leq (\gamma_{i}^{2}+CH^{2})(\lambda_{i}^{k}-Rq(u_{i,J}^{k}))||u_{i,J}^{k}||_{b}^{2}+CH^{2}||g_{i}^{k}||_{a}^{2}.
\end{align}
In addition, using \eqref{Align_fine_decom}, we may get the lower bound for $||\widetilde{e}_{i,R}^{k+1}||^{2}_{E_{i,R}^{k}}$ as
\begin{align}\label{Align_eik1_estimate}
\begin{aligned}
||\widetilde{e}_{i,R}^{k+1}||^{2}_{E_{i,R}^{k}}
&=(Rq(\widetilde{u}_{i}^{k+1})-\lambda_{i}^{k})||\widetilde{u}_{i}^{k+1}||_{b}^{2}
+(\lambda_{i}^{k}-Rq(\widetilde{u}_{i,J}^{k+1}))||\widetilde{u}_{i,J}^{k+1}||_{b}^{2}\\
&\ \ \ \ +(\lambda_{i}^{k}-Rq(\widetilde{e}_{i,L}^{k+1}))||\widetilde{e}_{i,L}^{k+1}||_{b}^{2}\\
&\geq (Rq(\widetilde{u}_{i}^{k+1})-Rq(u_{i,J}^{k}))||\widetilde{u}_{i}^{k+1}||_{b}^{2}
+(Rq(u_{i,J}^{k})-\lambda_{i}^{k})||\widetilde{e}_{i,R}^{k+1}||_{b}^{2}\\
&\ \ \ \ +(Rq(u_{i,J}^{k})-Rq(\widetilde{u}_{i,J}^{k+1}))||\widetilde{u}_{i,J}^{k+1}||_{b}^{2}.
\end{aligned}
\end{align}
\par Based on the upper and lower bound for $||\widetilde{e}_{i,R}^{k+1}||^{2}_{E_{i,R}^{k}}$, we have the following result. 
\begin{lemma}\label{Lemma_gamma_max}
Under the same Assumptions as Theorem \ref{Theorem_PJD_conver_rate}, for sufficiently small $H$, we have
\begin{align*}
\sum_{i=m}^{M}(Rq(\widetilde{u}_{i}^{k+1})-Rq(u_{i,J}^{k}))
\leq \gamma_{\max}\sum_{i=m}^{M}\eta_{i}^{k}+CH\sum_{i=m}^{M}||g_{i}^{k}||_{a}^{2},
\end{align*}
where $\gamma_{\max}=\max_{m\leq i\leq M} \gamma_{i}^{2}+CH$ and if $\lambda_{i}^{k}-Rq(u_{i,J}^{k})>0$ then $\eta_{i}^{k}=\lambda_{i}^{k}-Rq(u_{i,J}^{k})$, else  $\eta_{i}^{k}=\lambda_{i}^{k}-\lambda_{i}^{h}$.
\end{lemma}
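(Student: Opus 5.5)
We combine the upper bound \eqref{upper_bound_w_eiRk1} with the lower bound \eqref{Align_eik1_estimate} for each fixed $i$, after normalizing $\widetilde{u}_{i}^{k+1}$. Since the $\alpha_{i}^{k}$ are small and $u_{i}^{k}$ is $b$-normalized, $||\widetilde{u}_{i}^{k+1}||_{b}$, $||u_{i,J}^{k}||_{b}$ and $||\widetilde{u}_{i,J}^{k+1}||_{b}$ all equal $1+\mathcal{O}(H)$. This uses Lemma \ref{Lemma_estimate_UkUJ}, \eqref{align_estimate_theta_a_1M} and the fact that $||(B_{i}^{k})^{-1}r_{i}^{k}||_{a}$ is controlled as in the remark after Lemma \ref{Lemma_Estimate_I2ik_I3ik_I4ik}. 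We rearrange the lower bound so that $(Rq(\widetilde{u}_{i}^{k+1})-Rq(u_{i,J}^{k}))||\widetilde{u}_{i}^{k+1}||_{b}^{2}$ is bounded by $||\widetilde{e}_{i,R}^{k+1}||^{2}_{E_{i,R}^{k}}$ plus the two remaining terms with flipped signs, namely $(\lambda_{i}^{k}-Rq(u_{i,J}^{k}))||\widetilde{e}_{i,R}^{k+1}||_{b}^{2}$ and $(Rq(\widetilde{u}_{i,J}^{k+1})-Rq(u_{i,J}^{k}))||\widetilde{u}_{i,J}^{k+1}||_{b}^{2}$. Then we insert \eqref{upper_bound_w_eiRk1}.

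We split into two cases. \emph{Case $\lambda_{i}^{k}-Rq(u_{i,J}^{k})>0$:} the leading term is $(\gamma_{i}^{2}+CH^{2})\eta_{i}^{k}||u_{i,J}^{k}||_{b}^{2}\le(\gamma_{i}^{2}+CH)\eta_{i}^{k}$. The term $(\lambda_{i}^{k}-Rq(u_{i,J}^{k}))||\widetilde{e}_{i,R}^{k+1}||_{b}^{2}$ equals $\eta_{i}^{k}$ times $||\widetilde{e}_{i,R}^{k+1}||_{b}^{2}\le CH^{2}$. The smallness follows from the $a$-norm bound on $\widetilde{e}_{i,R}^{k+1}$ and $\theta_{a,*}^{k}\le CH$, and it is absorbed into $CH\,\eta_{i}^{k}$. \emph{Case $\lambda_{i}^{k}-Rq(u_{i,J}^{k})\le0$:} the upper bound's leading term is nonpositive and can be dropped. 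The flipped term is then $\le|\lambda_{i}^{k}-Rq(u_{i,J}^{k})|\,CH^{2}$. Since $u_{i,J}^{k}\in U_{J}^{h}$, we have $Rq(u_{i,J}^{k})\le\lambda_{M}^{h}$. This would be bounded via Lemma \ref{Lemma_lambdaik_Rq} and $\lambda_{i}^{k}\ge\lambda_{i}^{h}$ by $C\sum_{j}(\lambda_{j}^{k}-\lambda_{j}^{h})$ plus $CH^2\|g_i^k\|_a^2$. In either case the remaining piece $CH^{2}||g_{i}^{k}||_{a}^{2}$ goes into $CH||g_{i}^{k}||_{a}^{2}$.

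The main obstacle is the term involving $Rq(\widetilde{u}_{i,J}^{k+1})-Rq(u_{i,J}^{k})$. Individually it need not be small relative to $\eta_{i}^{k}$ when internal gaps are tiny, which is why only the sum over $i$ is claimed. The plan is to sum over $i=m,\dots,M$ and use that $\{u_{i,J}^{k}\}$ and $\{\widetilde{u}_{i,J}^{k+1}\}$ are both nearly $b$-orthonormal bases of the same space $U_{J}^{h}$, with Gram matrices $I+\mathcal{O}(H)$ by Lemma \ref{Lemma_estimate_UkUJ}. Hence $\sum_{i}a(\widetilde{u}_{i,J}^{k+1},\widetilde{u}_{i,J}^{k+1})$ and $\sum_{i}a(u_{i,J}^{k},u_{i,J}^{k})$ are both traces of $A^{h}|_{U_{J}^{h}}$ up to the Gram-matrix deviation. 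The difference of the sums of Rayleigh quotients is then bounded by $\lambda_{M}^{h}$ times the $b$-orthonormality defects. Those defects arise from components of size $\mathcal{O}(\theta_{a})$ and the correction $\alpha_{i}^{k}Q_{J}^{h}(I-Q^{k})t_{i}^{k}$. I would estimate them quadratically, as $CH(\sum_{i}\eta_{i}^{k}+\sum_{i}||g_{i}^{k}||_{a}^{2})$, using \eqref{Align_eik1_sqrt2} and Theorem \ref{Lemma_eiLk_estimate}. This is the step I expect to require the most care.

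Collecting everything, we take the maximum of $\gamma_{i}^{2}+CH$ over $i$ as $\gamma_{\max}$, which yields the stated bound.
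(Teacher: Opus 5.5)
Your treatment of the $\|\widetilde e_{i,R}^{k+1}\|_{E_{i,R}^k}^2$ part and of the case split follows the paper. The step you single out as the main obstacle, however, rests on a false premise, and the trace argument you propose for it does not close.

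The term $J_2:=(Rq(\widetilde u_{i,J}^{k+1})-Rq(u_{i,J}^k))\|\widetilde u_{i,J}^{k+1}\|_b^2$ \emph{is} small for each $i$ separately, and the paper bounds it that way. Write $\widetilde u_{i,J}^{k+1}=u_{i,J}^k+\alpha_i^kw_i^k$ with $w_i^k=Q_J^h(I-Q^k)(I-Q_J^k)(B_i^k)^{-1}r_i^k$. Since $U^k$ is $\mathcal{O}(H)$-close to $U_*^h$, the $U_J^h$-component of the correction is small: $\|w_i^k\|_b\le\theta_{b,*}^k\|(B_i^k)^{-1}r_i^k\|_b\le CH\|(B_i^k)^{-1}r_i^k\|_b$. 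Expanding gives
\[
J_2=2\alpha_i^k\,b((A^h-\lambda_i^k)u_{i,J}^k,w_i^k)+2\alpha_i^k(\lambda_i^k-Rq(u_{i,J}^k))\,b(u_{i,J}^k,w_i^k)+(\alpha_i^k)^2\,b((A^h-Rq(u_{i,J}^k))w_i^k,w_i^k),
\]
where $(A^h-\lambda_i^k)u_{i,J}^k=-Q_J^hr_i^k$ has $b$-norm at most $C\|g_i^k\|_a$. Combined with $\|(B_i^k)^{-1}r_i^k\|_b\le C\|e_{i,R}^k\|_{E_{i,R}^k}+CH^2\|g_i^k\|_a$ and \eqref{Align_eik1_sqrt2}, this yields $J_2\le CH\eta_i^k+CH\|g_i^k\|_a^2$ for every $i$. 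No spectral resolution inside $U_J^h$ is used, so the internal gaps never enter and no summation over $i$ is needed.

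The trace argument as you state it would not deliver this. You bound $\sum_iRq(\widetilde u_{i,J}^{k+1})-\sum_iRq(u_{i,J}^k)$ by $\lambda_M^h$ times the $b$-orthonormality defects. That is a first-order bound, and those defects carry no factor $H$. Already the Gram matrix of $\{u_{i,J}^k\}$ has entries $\delta_{ij}-b(e_{i,L}^k,e_{j,L}^k)-b(e_{i,R}^k,e_{j,R}^k)$. Here $\|e_{i,R}^k\|_b^2$ is in general only bounded by $C\|e_{i,R}^k\|_{E_{i,R}^k}^2$, i.e.\ by $C\eta_i^k$ up to $CH^2\|g_i^k\|_a^2$. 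Likewise $b(u_{i,J}^k,w_j^k)=-b((I-Q^k)Q_{J,\perp}^hu_i^k,(B_j^k)^{-1}r_j^k)$ can be bounded in only two ways. One is $CH\|(B_j^k)^{-1}r_j^k\|_b$, which is linear in the error. The other is $\|Q_{J,\perp}^hu_i^k\|_b\|(B_j^k)^{-1}r_j^k\|_b$, which is quadratic but has no $H$. Either way you get $C\sum_i\eta_i^k$ with $C$ not small, which is incompatible with $\gamma_{\max}=\max_i\gamma_i^2+CH$.

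Trace invariance could only help through a genuinely second-order argument. With $G,K$ the $b$- and $a$-Gram matrices of the basis, the deviation of $\sum_iK_{ii}/G_{ii}$ from ${\rm Tr}(G^{-1}K)$ is governed by products of off-diagonal entries. Even then you must sum per-$i$ inequalities that carry different weights $\|\widetilde u_{i,J}^{k+1}\|_b^2/\|\widetilde u_i^{k+1}\|_b^2$. Their mismatch is comparable to $\eta_i^k$, and multiplied by $|Rq(\widetilde u_{i,J}^{k+1})-Rq(u_{i,J}^k)|\le\lambda_M^h-\lambda_m^h$ it leaves an unabsorbable term unless you already have the individual bound above.

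A smaller slip: when $\lambda_i^k\le Rq(u_{i,J}^k)$, the term $(\lambda_i^k-Rq(u_{i,J}^k))\|\widetilde e_{i,R}^{k+1}\|_b^2$ is nonpositive and can simply be dropped. Bounding its absolute value via $Rq(u_{i,J}^k)\le\lambda_M^h$ would bring in the cluster width, which is not controlled by $\sum_j(\lambda_j^k-\lambda_j^h)$.
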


\begin{proof}
For $m\leq i\leq M$, from \eqref{Align_eik1_estimate}, we obtain
\begin{align*}
(Rq(\widetilde{u}_{i}^{k+1})-Rq(u_{i,J}^{k}))||\widetilde{u}_{i}^{k+1}||_{b}^{2}
&\leq \left(||\widetilde{e}_{i,R}^{k+1}||^{2}_{E_{i,R}^{k}}+(\lambda_{i}^{k}-Rq(u_{i,J}^{k}))||\widetilde{e}_{i,R}^{k+1}||_{b}^{2}\right)\\
&\ \ \ \ +(Rq(\widetilde{u}_{i,J}^{k+1})-Rq(u_{i,J}^{k}))||\widetilde{u}_{i,J}^{k+1}||_{b}^{2}
=:J_{1}+J_{2}.
\end{align*}
For $\lambda_{i}^{k}-Rq(u_{i,J}^{k})>0$, by Lemma \ref{Lemma_lambdaik_Rq}, Thorem \ref{Lemma_eiLk_estimate}, \eqref{Align_eik1_sqrt1} and \eqref{Align_eik1_sqrt2}, we have
\begin{align*}
J_{1}
&\leq (1+CH^{2})||\widetilde{e}_{i,R}^{k+1}||^{2}_{E_{i,R}^{k}}
\leq c(H)(\gamma_{i}^{2}+CH^{2})||e_{i,R}^{k}||_{E_{i,R}^{k}}^{2}+CH^{2}||g_{i}^{k}||_{a}^{2}\\
&\leq c(H)(\gamma_{i}^{2}+CH^{2})(\lambda_{i}^{k}-Rq(u_{i,J}^{k}))+CH^{2}||g_{i}^{k}||_{a}^{2}.
\end{align*}
For $\lambda_{i}^{k}-Rq(u_{i,J}^{k})\leq 0$, by Thorem \ref{Lemma_eiLk_estimate}, \eqref{Align_eik1_sqrt1} and \eqref{Align_eik1_sqrt2}, we get
\begin{align*}
J_{1}&\leq ||\widetilde{e}_{i,R}^{k+1}||^{2}_{E_{i,R}^{k}}
\leq (\gamma_{i}^{2}+CH^{2})\{||e_{i,L}^{k}||_{E_{i,L}^{k}}^{2}+(\lambda_{i}^{k}-Rq(u_{i,J}^{k}))||u_{i,J}^{k}||_{b}^{2}\}
\\
&\ \ \ \ +CH^{2}||g_{i}^{k}||_{a}^{2}
\leq (\gamma_{i}^{2}+CH^{2})(\lambda_{i}^{k}-\lambda_{i}^{h})+CH^{2}||g_{i}^{k}||_{a}^{2}.
\end{align*}
\par For notational convenience, we denote by $w_{i}^{k}:=Q_{J}^{h}(I-Q^{k})(I-Q_{J}^{k})(B_{i}^{k})^{-1}r_{i}^{k}$.
Note that
$||\widetilde{u}_{i,J}^{k+1}||_{b}^{2}
=b(u_{i,J}^{k},u_{i,J}^{k})+2\alpha_{i}^{k}b(u_{i,J}^{k},w_{i}^{k})+(\alpha_{i}^{k})^{2}b(w_{i}^{k},w_{i}^{k})$
and
$||\widetilde{u}_{i,J}^{k+1}||_{a}^{2}
=a(u_{i,J}^{k},u_{i,J}^{k})+2\alpha_{i}^{k}a(u_{i,J}^{k},w_{i}^{k})+(\alpha_{i}^{k})^{2}a(w_{i}^{k},w_{i}^{k}).$
Using the similar argument as \eqref{align_estimate_theta_a_1M} and \eqref{align_theta_ak_def_1M}, we may have $\theta_{b}^{k}(U^{k},U_{*}^{h})=|||Q_{*}^{h}(I-Q^{k})|||_{b}\leq CH$ and let $\theta_{b,*}^{k}:=\theta_{b}^{k}(U^{k},U_{*}^{h})$.  Moreover,
\begin{align*}
J_{2}
&=||\widetilde{u}_{i,J}^{k+1}||_{a}^{2}-Rq(u_{i,J}^{k})||\widetilde{u}_{i,J}^{k+1}||_{b}^{2}\\
&=2\alpha_{i}^{k}b((A^{h}-\lambda_{i}^{k})u_{i,J}^{k},w_{i}^{k})+2\alpha_{i}^{k}b((\lambda_{i}^{k}-Rq(u_{i,J}^{k}))u_{i,J}^{k},w_{i}^{k})\\
&\ \ \ \ +(\alpha_{i}^{k})^{2}b((A^{h}-Rq(u_{i,J}^{k}))w_{i}^{k},w_{i}^{k})\\
&\leq C\theta_{b,*}^{k}||g_{i}^{k}||_{a}||(B_{i}^{k})^{-1}r_{i}^{k}||_{b}
+C\theta_{b,*}^{k}(\lambda_{i}^{k}-Rq(u_{i,J}^{k}))||(B_{i}^{k})^{-1}r_{i}^{k}||_{b}\\
&\ \ \ \ +C(\theta_{b,*}^{k})^{2}||(B_{i}^{k})^{-1}r_{i}^{k}||^{2}_{b},
\end{align*}
which, together with the fact $||(B_{i}^{k})^{-1}r_{i}^{k}||_{b}\leq C||e_{i,R}^{k}||_{E_{i,R}^{k}}+CH^{2}||g_{i}^{k}||_{a}$, yields
\begin{align*}
J_{2}\leq CH\eta_{i}^{k}+CH||g_{i}^{k}||_{a}^{2}.
\end{align*}
Further, by the fact that $||\widetilde{u}_{i}^{k+1}||_{b}\geq 1$, we obtain
\begin{align*}
(Rq(\widetilde{u}_{i}^{k+1})-Rq(u_{i,J}^{k}))
&\leq \frac{1}{||\widetilde{u}_{i}^{k+1}||_{b}^{2}}(J_{1}+J_{2})\leq J_{1}+J_{2}\\
&\leq c(H)(\gamma_{i}^{2}+CH^{2})||e_{i,R}^{k}||_{E_{i,R}^{k}}^{2}+CH\eta_{i}^{k}+CH||g_{i}^{k}||_{a}^{2}\\
&\leq c(H)(\gamma_{i}^{2}+CH^{2})\eta_{i}^{k}+CH||g_{i}^{k}||_{a}^{2},
\end{align*}
which completes the proof.
\end{proof}

\par Later on, taking advantage of Lemma 5 in \cite{Ovtchinnikov}, it is straightforward to establish an estimate for $\sum_{i=m}^{M}(\hat{\lambda}_{i}^{k+1}-Rq(\widetilde{u}_{i}^{k+1}))$.
We omit the proof for brevity.
\begin{lemma}\label{Lemma_estimate_lam_kh}
Under the same Assumption as Theorem \ref{Theorem_PJD_conver_rate}, it holds that
\begin{align*}
\sum_{i=m}^{M}(\hat{\lambda}_{i}^{k+1}-Rq(\widetilde{u}_{i}^{k+1}))
\leq CH\sum_{i=m}^{M}\eta_{i}^{k} +CH\sum_{i=m}^{M}||g_{i}^{k}||_{a}^{2}.
\end{align*}
\end{lemma}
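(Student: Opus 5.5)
The plan is to compare the Ritz values $\hat{\lambda}_{i}^{k+1}$ of the auxiliary problem \eqref{Align_JD_AuxiEigen} with the Rayleigh quotients of the trial vectors $\widetilde{u}_{i}^{k+1}$ through a trace argument. Because the Ritz values are the eigenvalues of the compression of $A^{h}$ to the trial space $\widetilde{U}_{J}^{k+1}\oplus{\rm span}\{u_{i}^{k}\}_{i=1}^{m-1}$, the sum $\sum_{i=m}^{M}\hat{\lambda}_{i}^{k+1}$ can be bounded by the trace of the compression over the $m$th-to-$M$th block. Lemma 5 in \cite{Ovtchinnikov} makes this precise: the sum of Ritz values minus the sum of Rayleigh quotients of an almost orthogonal basis is bounded by off-diagonal quantities. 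Concretely, I will form the Gram matrices $G_{b}=(b(\widetilde{u}_{i}^{k+1},\widetilde{u}_{j}^{k+1}))$ and $G_{a}=(a(\widetilde{u}_{i}^{k+1},\widetilde{u}_{j}^{k+1}))$, together with their couplings to $u_{1}^{k},\dots,u_{m-1}^{k}$. I will then bound $\sum_{i}\hat{\lambda}_{i}^{k+1}-\sum_{i}Rq(\widetilde{u}_{i}^{k+1})$ by a constant times the squared off-diagonal entries of the normalized $b$- and $a$-Gram matrices. The constant depends only on $d_{m}^{-}$ and $d_{M}^{+}$, since the lower block (with Ritz values near $\lambda_{1}^{h},\dots,\lambda_{m-1}^{h}$) is separated from the targeted block. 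It must not depend on the inner gaps, which is why I use the trace rather than individual eigenvalues.

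The key steps are as follows. First, by \eqref{Align_widetildeuk1}, $\widetilde{u}_{i}^{k+1}-u_{i}^{k}=\alpha_{i}^{k}(I-Q^{k})t_{i}^{k}$, and this increment is $b$-orthogonal to $U^{k}$. Hence $b(\widetilde{u}_{i}^{k+1},u_{j}^{k})=\delta_{ij}$ for $j\le M$, and $a(\widetilde{u}_{i}^{k+1},u_{j}^{k})=\lambda_{j}^{k}\delta_{ij}$ plus the coupling $\alpha_{i}^{k}a((I-Q^{k})t_{i}^{k},u_{j}^{k})$. Because $u_{j}^{k}$ is a Ritz vector in $W_{J}^{k}\supset U^{k}$ and the correction is $b$-orthogonal to $U^{k}$, this coupling equals $-\alpha_{i}^{k}b((I-Q^{k})t_{i}^{k},r_{j}^{k})$ (up to sign). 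Second, the off-diagonal entries among the targeted vectors reduce to $(\alpha_{i}^{k})(\alpha_{j}^{k})b((I-Q^{k})t_{i}^{k},(I-Q^{k})t_{j}^{k})$ for $b$, and to analogous $a$-terms plus residual couplings. Third, I bound every such entry by products of the form $\|(B_{i}^{k})^{-1}r_{i}^{k}\|\cdot\|g_{j}^{k}\|_{a}$. For this I use the fact $\|(B_{i}^{k})^{-1}r_{i}^{k}\|_{b}\le C\|e_{i,R}^{k}\|_{E_{i,R}^{k}}+CH^{2}\|g_{i}^{k}\|_{a}$ from the proof of Lemma \ref{Lemma_gamma_max}, the smallness of $\alpha_{i}^{k}$, and \eqref{Align_eik1_sqrt2} together with Theorem \ref{Lemma_eiLk_estimate}, which give $\|e_{i,R}^{k}\|_{E_{i,R}^{k}}^{2}\le C\eta_{i}^{k}+CH^{2}\|g_{i}^{k}\|_{a}^{2}$.

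The extra factor $H$ is where the main obstacle lies. A squared off-diagonal term naively yields only $C(\eta_{i}^{k}+\|g_{i}^{k}\|_{a}^{2})$, with no power of $H$. To gain it, I will exploit the fact that the correction $(I-Q^{k})t_{i}^{k}$ lies essentially in $U_{R}^{h}$ up to an $O(H)$ component. This follows from $\theta_{b,*}^{k}\le CH$ and \eqref{align_estimate_theta_a_1M}, \eqref{align_theta_ak_def_1M}, exactly as in the estimate of $J_{2}$. Meanwhile the residuals $r_{j}^{k}$ and the vectors $u_{j}^{k}$ paired with it are dominated by their $U_{*}^{h}$-components, with high-frequency part controlled by $\theta_{a,*}^{k}\le CH$. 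Each cross term thus carries one factor $\theta^{k}_{\cdot,*}\le CH$. The bound then follows by Cauchy--Schwarz and $2xy\le x^{2}+y^{2}$, giving $CH\sum_{i}(\eta_{i}^{k}+\|g_{i}^{k}\|_{a}^{2})$. I will also verify that the constant in Ovtchinnikov's lemma stays bounded. This uses Lemma \ref{Lemma_estimate_UkUJ} and $\lambda_{m-1}^{k}<\lambda_{m}^{h}-C$ for small $H$, which keeps the lower Ritz block uniformly separated from the targeted block, with a separation depending only on $d_{m}^{-}$.
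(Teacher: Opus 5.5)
Your route is the one the paper intends. The paper gives no argument beyond citing Lemma 5 in \cite{Ovtchinnikov}, and your Gram-matrix bookkeeping is right: $b(\widetilde{u}_{i}^{k+1},u_{j}^{k})=\delta_{ij}$ for $j\le M$, and with $q_{i}^{k}=(I-Q^{k})t_{i}^{k}$ the coupling is $a(\widetilde{u}_{i}^{k+1},u_{j}^{k})-\lambda_{j}^{k}\delta_{ij}=-\alpha_{i}^{k}b(q_{i}^{k},r_{j}^{k})$.

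One clarification on the trace: read literally, your opening sentence has the inequality backwards. Let $G_{b},G_{a}$ be the Gram matrices of $\widetilde{u}_{m}^{k+1},\dots,\widetilde{u}_{M}^{k+1}$. The block structure gives exactly
\[
\sum_{i=m}^{M}\hat{\lambda}_{i}^{k+1}={\rm Tr}(G_{b}^{-1}G_{a})+\sum_{j=1}^{m-1}(\lambda_{j}^{k}-\hat{\lambda}_{j}^{k+1}),
\]
and the last sum is nonnegative by Courant--Fischer. So the $d_{m}^{-}$-dependent piece is this excess. It is controlled as in \eqref{algn_check_i1} by $C\sum_{i\ge m,\,j<m}|a(\widetilde{u}_{i}^{k+1},u_{j}^{k})|^{2}$. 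After diagonal scaling $G_{b}\to I+E$, the remaining quantity ${\rm Tr}(G_{b}^{-1}G_{a})-\sum_{i}Rq(\widetilde{u}_{i}^{k+1})$ equals $-{\rm Tr}(E\widetilde{G}_{a})+O(\|E\|_{F}^{2})$. This pairs off-diagonal $b$-entries only with off-diagonal $a$-entries, which is why no inner gaps appear.

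The step that fails as written is your source of the factor $H$. The residuals are \emph{not} dominated by their $U_{*}^{h}$-components. Since $a(g_{j}^{k},u_{l}^{k})=0$ for all $l\le M$, we have $g_{j}^{k}=(I-P^{k})g_{j}^{k}$ and $\|Q_{*}^{h}g_{j}^{k}\|_{a}\le\theta_{a,*}^{k}\|g_{j}^{k}\|_{a}$. Thus $g_{j}^{k}$ and $r_{j}^{k}=-\lambda_{j}^{k}A^{h}g_{j}^{k}$ are concentrated in $U_{R}^{h}$; the paper uses exactly this in the $J_{RR}$ bound of Lemma \ref{Lemma_estimate_gik}.

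Consequently $b(q_{i}^{k},r_{j}^{k})$ is a high--high pairing, and the frequency splitting yields no $\theta$. The same holds for the correction--correction entries $\alpha_{i}^{k}\alpha_{j}^{k}b(q_{i}^{k},q_{j}^{k})$ and $\alpha_{i}^{k}\alpha_{j}^{k}a(q_{i}^{k},q_{j}^{k})$. So your claim that ``each cross term carries one factor $\theta_{\cdot,*}^{k}$'' is false.

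What actually supplies $H$ is absolute smallness. From $g_{j}^{k}=-(I-P^{k})T^{h}u_{j}^{k}$ one gets $\|g_{j}^{k}\|_{a}\le C\theta_{a,*}^{k}\le CH$ for every $j\le M$. Hence also $\|q_{i}^{k}\|_{a}\le C\|(B_{i}^{k})^{-1}r_{i}^{k}\|_{a}\le C\|g_{i}^{k}\|_{a}\le CH$. Equivalently, write the coupling as $-\alpha_{i}^{k}a(q_{i}^{k},u_{j}^{k})$ and use that $u_{j}^{k}$, not $r_{j}^{k}$, is low-frequency dominated.

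With $|b(q_{i}^{k},r_{j}^{k})|=\lambda_{j}^{k}|a(q_{i}^{k},g_{j}^{k})|$, every off-diagonal entry is then at most $CH(\|q_{i}^{k}\|_{a}+\|q_{j}^{k}\|_{a})$. All squared and product terms are therefore bounded by $CH^{2}\sum_{i}\|(B_{i}^{k})^{-1}r_{i}^{k}\|_{a}^{2}\le CH^{2}\sum_{i}(\eta_{i}^{k}+\|g_{i}^{k}\|_{a}^{2})$, which is stronger than the stated bound.
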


\subsection{Upper bound for residuals of $T^{h}$}
\par In this subsection, we give an upper estimate with $L^{2}$-orthogonal projection for $\sum_{i=m}^{M}||g_{i}^{k}||^{2}_{a}$. 
For $m\leq i \leq M$, define
$\bar{u}_{i}^{k+1}:=u_{i}^{k}+\tau (I-Q^{k})t_{i}^{k}$,
where $\tau$ is a parameter which depends on the number $N_{0}$. Then we construct the auxiliary eigenvalue problem as follows
\begin{align}\label{align_second_auxi_eig}
a(\check{u}_{i}^{k+1},v)=\check{\lambda}_{i}^{k+1}b(\check{u}_{i}^{k+1},v)\ \ \ \ \forall\ v\in \bar{U}^{k+1},
\end{align}
where $\bar{U}^{k+1}={\rm span}\{\bar{u}_{i}^{k+1}\}_{i=m}^{M}\oplus {\rm span}\{u_{i}^{k}\}_{i=1}^{m-1}$. Since $\bar{U}^{k}\subset W_{J}^{k+1}$, we know 
\begin{align}\label{check_eig_inequ}
    \lambda_{i}^{k+1}\leq \check{\lambda}_{i}^{k+1}.
\end{align}
\par Through analyzing \eqref{align_second_auxi_eig}, we have the following. \begin{lemma}\label{Lemma_estimate_gik}
Under the same Assumptions as Theorem \ref{Theorem_PJD_conver_rate}, for sufficiently small $H$, we have
\begin{align*}
\sum_{i=m}^{M}||g_{i}^{k}||_{a}^{2}\leq C\sum_{i=m}^{M}(\lambda_{i}^{k}-\check{\lambda}_{i}^{k+1}),
\end{align*}
where $\check{\lambda}_{i}^{k+1}$ is defined in \eqref{align_second_auxi_eig}.
\end{lemma}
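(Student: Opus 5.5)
The idea is to show that a single fixed-step preconditioned correction, with step $\tau$ depending only on $N_{0}$, already lowers the Rayleigh quotients of the targeted directions by an amount comparable to $||g_{i}^{k}||_{a}^{2}$. We then transfer this decrease to $\check{\lambda}_{i}^{k+1}$ through a trace (sum) argument in the small space $\bar U^{k+1}$.

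\textbf{Step 1 (energy decrease along each direction).} For each $i$, expand
\begin{align*}
Rq(\bar u_{i}^{k+1})-\lambda_{i}^{k}
=\frac{2\tau\, b((A^{h}-\lambda_{i}^{k})u_{i}^{k},p_{i}^{k})+\tau^{2}b((A^{h}-\lambda_{i}^{k})p_{i}^{k},p_{i}^{k})}{||\bar u_{i}^{k+1}||_{b}^{2}},
\end{align*}
where $p_{i}^{k}=(I-Q^{k})t_{i}^{k}$. Since $(A^{h}-\lambda_{i}^{k})u_{i}^{k}=-r_{i}^{k}$ and $r_{i}^{k}\perp_{b}U^{k}$, the linear term reduces to $-2\tau\, b(r_{i}^{k},(B_{i}^{k})^{-1}r_{i}^{k})$, up to the projection terms $Q_{J}^{k}$ and $Q^{k}$. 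As in the treatment of $I_{i,3}^{k}$, these projection terms are $O(H)$ relative to the main term, by Lemma \ref{Lemma_estimate_UkUJ} and \eqref{align_estimate_theta_a_1M}.

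\textbf{Step 2 (bounding the linear and quadratic terms).}
\begin{itemize}
\item For the linear term, we use the two-level additive Schwarz lower bound from \eqref{align_min_max_eigen} and the stable decomposition argument behind Lemma \ref{Theorem_Gik_norm_estimate}. This gives $b(r_{i}^{k},(B_{i}^{k})^{-1}r_{i}^{k})\ge c\,||(A^{h}-\lambda_{i}^{k})^{-1/2}\text{-weighted } r_{i}^{k}||^{2}$. Since $r_{i}^{k}=\lambda_{i}^{k}A^{h}g_{i}^{k}$, this is in turn $\ge c||g_{i}^{k}||_{a}^{2}$ on the $U_{R}^{h}$ part. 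The $U_{L}^{h}\oplus U_{J}^{h}$ part of $g_{i}^{k}$ is controlled via Theorem \ref{Lemma_eiLk_estimate} and Lemma \ref{Lemma_lambdaik_Rq}, and is only $O(H)$ times $||g_{i}^{k}||_{a}$.
\item For the quadratic term, we use the finite covering Assumption \ref{Assumption2} (strengthened Cauchy--Schwarz with $N_{0}$ colors) to get $b((A^{h}-\lambda_{i}^{k})p_{i}^{k},p_{i}^{k})\le (N_{0}+1)C\,b(r_{i}^{k},(B_{i}^{k})^{-1}r_{i}^{k})$.
\end{itemize}
Choosing $\tau\sim 1/(N_{0}+1)$ then yields $\lambda_{i}^{k}-Rq(\bar u_{i}^{k+1})\ge c||g_{i}^{k}||_{a}^{2}-CH(\text{lower order})$. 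Note that $||\bar u_{i}^{k+1}||_{b}$ is bounded above by a constant, since the correction is bounded in $b$-norm by the $H^{2}$-type estimates of Lemma \ref{Lemma_Estimate_I2ik_I3ik_I4ik}.

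\textbf{Step 3 (passing from single vectors to the Ritz values $\check\lambda_{i}^{k+1}$).} This is the main obstacle, because the interior gaps must not enter. I would use the trace characterization of sums of Ritz values restricted to the block orthogonal to $\mathrm{span}\{u_{j}^{k}\}_{j=1}^{m-1}$.

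The lower Ritz values of \eqref{align_second_auxi_eig} stay close to $\lambda_{j}^{k}$, $j<m$, by $d_{m}^{-}$-separation. We therefore compare $\sum_{i=m}^{M}\check\lambda_{i}^{k+1}$ with the trace of the Rayleigh matrix of $\{\bar u_{i}^{k+1}\}$ after $b$-orthonormalizing. Off-diagonal entries are $O(\tau\,||g||_{a})$ times the cross terms $b(r_{i}^{k},(B_{j}^{k})^{-1}r_{j}^{k})$ (noting $a(u_{i}^{k},u_{j}^{k})=0$). After Gram--Schmidt, these contribute only second-order quantities $\tau^{2}\sum||g||^{2}$, which are absorbed by the choice of $\tau$. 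This is in the spirit of Lemma 5 in \cite{Ovtchinnikov}, as used for Lemma \ref{Lemma_estimate_lam_kh}.

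Summing over $i$ gives $\sum_{i}(\lambda_{i}^{k}-\check\lambda_{i}^{k+1})\ge c\sum_{i}||g_{i}^{k}||_{a}^{2}-CH\sum_{i}||g_{i}^{k}||_{a}^{2}$. For sufficiently small $H$ the last term is absorbed, which yields the lemma.
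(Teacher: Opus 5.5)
\textbf{There is a genuine gap in Step 3.} Your Steps 1--2 essentially reproduce the first half of the paper's proof. The paper works at the level of traces: $B_{11}=I+\tau^{2}(b(q_i^k,q_j^k))\ge I$ and $A_{11}\succeq 0$ give ${\rm Tr}(B_{11}^{-1}A_{11})\le{\rm Tr}(A_{11})$. Hence $\sum_{i=m}^{M}(\lambda_i^k-\lambda_{i,1})\ge 2\tau\sum_i b((B_i^k)^{-1}r_i^k,r_i^k)-\tau^2\sum_i\|q_i^k\|_a^2$. Also, the projection terms in your linear term vanish exactly, since $r_i^k\perp_b U^k$.

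The problem is Step 3. The space $\bar U^{k+1}$ also contains ${\rm span}\{u_j^k\}_{j=1}^{m-1}$. This block is $b$-orthogonal to ${\rm span}\{\bar u_i^{k+1}\}_{i=m}^{M}$ (so $B_{12}=0$), but it is not $a$-orthogonal: $a(\bar u_i^{k+1},u_j^k)=\tau a(q_i^k,u_j^k)=\tau\lambda_j^k a(q_i^k,g_j^k)\neq 0$. So there is no ``trace characterization restricted to the block orthogonal to ${\rm span}\{u_j^k\}_{j<m}$.'' Ky Fan only gives $\sum_{i=m}^{M}\check\lambda_i^{k+1}\ge\sum_{i=m}^{M}\lambda_{i,1}$, which is the wrong direction. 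More precisely, $B$ is block diagonal and $A_{22}={\rm diag}(\lambda_j^k)$, so the trace identity gives $\sum_{i=m}^{M}(\check\lambda_i^{k+1}-\lambda_{i,1})=\sum_{j=1}^{m-1}(\lambda_j^k-\check\lambda_j^{k+1})\ge 0$. Your remark that ``the lower Ritz values stay close to $\lambda_j^k$'' is therefore exactly what must be proved, and it must be quantitative: a bound by $\epsilon\sum_{i=m}^{M}\|g_i^k\|_a^2$ with $\epsilon$ small. The separation $d_m^-$ supplies only the denominator of such a bound, not the numerator. What you discuss instead (off-diagonal entries inside the targeted block, Gram--Schmidt) is a non-issue: those entries do not enter the trace, and $B_{11}\ge I$ already handles the non-orthonormality.

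\textbf{The missing ingredient is an estimate of the coupling block $A_{12}$.} From the identity above, $\|A_{12}\|^2\le\|A_{12}\|_F^2\le C\sum_{j=1}^{m-1}\|g_j^k\|_a^2\sum_{i=m}^{M}\|(B_i^k)^{-1}r_i^k\|_a^2$, and $\|(B_i^k)^{-1}r_i^k\|_a^2\le C\|g_i^k\|_a^2$. Combine this with a perturbation bound for a pencil with $B_{12}=0$; the paper uses Lemma 8 of \cite{Ovtchinnikov}. That bound controls $\sum_{i=m}^{M}(\check\lambda_i^{k+1}-\lambda_{i,1})$ by $C\|A_{12}\|^2$, with $C$ depending on the gap between $\check\lambda_m^{k+1}\ge\lambda_m^h$ and $\lambda_{m-1}^k$; this is where $d_m^-$ legitimately enters. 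The paper then makes the term small via $\sum_{j<m}\|g_j^k\|_a^2\le C\sum_{j<m}(\mu_j^h-\mu_j^0)\le CH^2$, i.e.\ the lower Ritz vectors are already accurate, which is where ``sufficiently small $H$'' is used. Alternatively, since $\|A_{12}\|^2$ carries a factor $\tau^2$ and $\sum_{j<m}\|g_j^k\|_a^2$ is bounded, you could absorb it by shrinking $\tau$, as you intended for your other second-order terms. Once this cross-block estimate is supplied, your argument closes; as written, Step 3 does not establish the lemma.
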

\begin{proof}
The matrix form of \eqref{align_second_auxi_eig} defined in $\bar{U}^{k+1}$ may be rewritten as 
\begin{align*}
\left(
\begin{matrix}
A_{11}&A_{12}\\
A_{21}&A_{22}
\end{matrix}
\right)y_{i}
=\check{\lambda}_{i}^{k+1}
\left(
\begin{matrix}
B_{11}&0\\
0&B_{22}
\end{matrix}
\right)y_{i},
\end{align*}
where $(A_{11},B_{11})$ and $(A_{22},B_{22})$ correspond to ${\rm span}\{\bar{u}_{i}^{k+1}\}_{i=m}^{M}$ and $ {\rm span}\{u_{i}^{k}\}_{i=1}^{m-1}$, respectively. For convenience, we denote by $\lambda_{i,1}\ (m\leq i\leq M)$ the eigenvalues of the matrix pair $(A_{11},B_{11})$. Note that $\sum_{i=m}^{M}(\lambda_{i}^{k}-\check{\lambda}_{i}^{k+1})
=\sum_{i=m}^{M}(\lambda_{i}^{k}-\lambda_{i,1})-\sum_{i=m}^{M}(\check{\lambda}_{i}^{k+1}-\lambda_{i,1}).$
Therefore, it suffices to prove that
\begin{align}\label{Lemma_suffice_gika}
\sum_{i=m}^{M}(\lambda_{i}^{k}-\lambda_{i,1})\geq C\sum_{i=m}^{M}||g_{i}^{k}||_{a}^{2}\ \ \text{and}
\ \ \sum_{i=m}^{M}(\check{\lambda}_{i}^{k+1}-\lambda_{i,1})
\leq C\epsilon(H)\sum_{i=m}^{M}||g_{i}^{k}||_{a}^{2},
\end{align}
where $\epsilon(H)\to 0$, as $H\to 0$.
\par We first prove the left inequality of \eqref{Lemma_suffice_gika}. For $m\leq i,j\leq M$, denoting by $q_{i}^{k}=(I-Q^{k})t_{i}^{k}$, we have
\begin{align*}
b(\bar{u}_{i}^{k+1},\bar{u}_{j}^{k+1})
=b(u_{i}^{k}+\tau q_{i}^{k},u_{j}^{k}+\tau q_{j}^{k})
=\delta_{ij}+\tau^{2}b(q_{i}^{k},q_{j}^{k}),
\end{align*}
which means $B_{11}\geq I_{11}$ with $I_{11}$ being an identity matrix of same size. Furthermore, noting that $a(u_{i}^{k},q_{j}^{k})
=b(A^{h}u_{i}^{k}-\lambda_{i}^{k}u_{i}^{k},q_{j}^{k})
=-b(r_{i}^{k},q_{j}^{k})=-b(r_{i}^{k},(B_{j}^{k})^{-1}r_{j}^{k})$, we get
\begin{align*}
a(\bar{u}_{i}^{k+1},\bar{u}_{j}^{k+1})
&=\lambda_{i}^{k}\delta_{ij}-\tau\left(b(r_{i}^{k},(B_{j}^{k})^{-1}r_{j}^{k})+b((B_{i}^{k})^{-1}r_{i}^{k},r_{j}^{k})\right)
+\tau^{2}a(q_{i}^{k},q_{j}^{k})\\
&=:a^{1}_{ij}-\tau a^{2}_{ij}+\tau^{2}a^{3}_{ij}.
\end{align*}
For convenience, we denote by 
$A_{1}=(a_{ij}^{1}),\ A_{2}=(a_{ij}^{2})$ and $A_{3}=(a_{ij}^{3}),$
so we have $A_{11}=A_{1}-\tau A_{2}+\tau^{2}A_{3}.$
Moreover, by the fact that $A_{11}$ is a symmetric semi-positive definite matrix, we deduce
\begin{align*}
\sum_{i=m}^{M}(\lambda_{i}^{k}-\lambda_{i,1})
&={\rm Tr}(A_{1})-{\rm Tr}(B_{11}^{-1}A_{11})
\geq {\rm Tr}(A_{1})-{\rm Tr}(A_{11})
=\tau {\rm Tr}(A_{2})-\tau^{2}{\rm Tr}(A_{3})\\
&=2\tau \sum_{i=m}^{M}b((B_{i}^{k})^{-1}r_{i}^{k},r_{i}^{k})-\tau^{2}\sum_{i=m}^{M}a(q_{i}^{k},q_{i}^{k}).
\end{align*}
On one hand, by Assumption \ref{Assumption2},
$||q_{i}^{k}||_{a}^{2}\leq C||(B_{i}^{k})^{-1}r_{i}^{k}||_{a}^{2}
\leq C_{N_{0}}b((B_{i}^{k})^{-1}r_{i}^{k},r_{i}^{k})$ holds.
On the other hand, we may give a lower bound for $\sum_{i=m}^{M}b((B_{i}^{k})^{-1}r_{i}^{k},r_{i}^{k})$ by estimating two terms
\begin{align*}
b((B_{i}^{k})^{-1}r_{i}^{k},r_{i}^{k})
&\geq |b(Q_{R}^{h}(B_{i}^{k})^{-1}Q_{R}^{h}r_{i}^{k},Q_{R}^{h}r_{i}^{k})|-
\{|b(Q_{R}^{h}(B_{i}^{k})^{-1}Q_{*}^{h}r_{i}^{k},Q_{R}^{h}r_{i}^{k})|\\
&\ \ \ \ +|b(Q_{*}^{h}(B_{i}^{k})^{-1}Q_{R}^{h}r_{i}^{k},Q_{*}^{h}r_{i}^{k})|
+|b(Q_{*}^{h}(B_{i}^{k})^{-1}Q_{*}^{h}r_{i}^{k},Q_{*}^{h}r_{i}^{k})|\}\\
&=:J_{RR}-J_{R*}.
\end{align*}
For the first term $J_{RR}$, by \eqref{align_estimate_theta_a_1M}, \eqref{align_theta_ak_def_1M} and using the same argument as \eqref{align_operator_eig}, we get
\begin{align*}
J_{RR}
&=|(Q_{R}^{h}\sum_{l=0}^{N}T_{l,i}^{k}Q_{R}^{h}u_{i}^{k},Q_{R}^{h}u_{i}^{k})_{E_{i,R}^{k}}|
\geq \frac{1}{C_{i}\left(1+\frac{H}{\delta}\right)}(Q_{R}^{h}u_{i}^{k},Q_{R}^{h}u_{i}^{k})_{E_{i,R}^{k}}\\
&\geq C||Q_{R}^{h}g_{i}^{k}||_{a}^{2}
\geq C(||g_{i}^{k}||_{a}^{2}-(\theta_{a,*}^{k})^{2}||g_{i}^{k}||_{a}^{2})
\geq C(1-CH^{2})||g_{i}^{k}||_{a}^{2}.
\end{align*}
For the second term $J_{R*}$, using Lemma \ref{Lemma_QLQJ} and the Poincar\'e inequality, since $(I-P^{k})g_{i}^{k}=g_{i}^{k}$, we know 
$J_{R*}\leq CH||g_{i}^{k}||_{a}^{2}.$ Hence, taking $\tau=\frac{1}{C_{N_{0}}}$, we obtain
\begin{align*}
\sum_{i=m}^{M}(\lambda_{i}^{k}-\lambda_{i,1})\geq \frac{1}{C_{N_{0}}} \sum_{i=m}^{M}b((B_{i}^{k})^{-1}r_{i}^{k},r_{i}^{k})\geq C\sum_{i=m}^{M}||g_{i}^{k}||_{a}^{2}.
\end{align*}
\par Next, we prove the right inequality of \eqref{Lemma_suffice_gika}. Using the same argument as Lemma 8 in \cite{Ovtchinnikov} and the fact $B_{12}=0$, we know that
\begin{align}\label{algn_check_i1}
\sum_{i=m}^{M}(\check{\lambda}_{i}^{k+1}-\lambda_{i,1})
\leq \sum_{i=m}^{M}\sum_{j=m}^{M}\frac{\check{\lambda}_{i}^{k+1}||A_{12}||^{2}}{(\check{\lambda}_{j}^{k+1}-\lambda_{m-1,0})^{2}}.
\end{align}
Noting that for any $i=m,m+1,\cdots,M$, $j=1,2,\cdots,m-1$,
\begin{align*}
a(\bar{u}_{i}^{k+1},u_{j}^{k})
=\tau a(q_{i}^{k},u_{j}^{k})
=\tau \lambda_{j}^{k} a((I-Q^{k})t_{i}^{k},g_{j}^{k})
\leq C||(B_{i}^{k})^{-1}r_{i}^{k}||_{a}||g_{j}^{k}||_{a},
\end{align*}
we deduce
\begin{align*}
||A_{12}||^{2}
&\leq ||A_{12}||_{F}^{2}
=\sum_{i=m}^{M}\sum_{j=1}^{m-1}\left(a(\bar{u}_{i}^{k+1},u_{j}^{k})\right)^{2}
\leq C\sum_{j=1}^{m-1}||g_{j}^{k}||_{a}^{2}
\sum_{i=m}^{M}||(B_{i}^{k})^{-1}r_{i}^{k}||_{a}^{2},
\end{align*}
which, together with \eqref{algn_check_i1} and Assumption \ref{Assumption1}, yields
\begin{align*}
\sum_{i=m}^{M}(\check{\lambda}_{i}^{k+1}-\lambda_{i,1})
&\leq C\sum_{j=1}^{m-1}||g_{j}^{k}||_{a}^{2}
\sum_{i=m}^{M}||(B_{i}^{k})^{-1}r_{i}^{k}||_{a}^{2}\\
&\leq C\sum_{j=1}^{m-1}(\mu_{j}^{h}-\mu_{j}^{0})
\sum_{i=m}^{M}(C||e_{i,R}^{k}||_{E_{i,R}^{k}}^{2}+CH^{2}||g_{i}^{k}||_{a}^{2}).
\end{align*}
Since $||e_{i,R}^{k}||_{E_{i,R}^{k}}^{2}
=b((A^{h}-\lambda_{i}^{k})Q_{R}^{h}u_{i}^{k},Q_{R}^{h}u_{i}^{k})
=\lambda_{i}^{k}a((\mu_{i}^{k}-T^{h})Q_{R}^{h}u_{i}^{k},Q_{R}^{h}u_{i}^{k})
\leq C||g_{i}^{k}||_{a}^{2},$
we obtain $\sum_{i=m}^{M}(\check{\lambda}_{i}^{k+1}-\lambda_{i,1})
\leq C\epsilon(H)\sum_{i=m}^{M}||g_{i}^{k}||_{a}^{2},$
where $\epsilon(H)=\sum_{j=1}^{m-1}(\mu_{j}^{h}-\mu_{j}^{0})\ (\leq CH^{2})\to 0$, as $H\to 0$.
\end{proof}

\par Now we are in a position to prove the main result in this paper.
\par\noindent{\bf Proof of Theorem \ref{Theorem_PJD_conver_rate}}:\ \
By \eqref{hat_eig_inequ}, we know $\sum_{i=m}^{M}(\lambda_{i}^{k+1}-\lambda_{i}^{h})
\leq \sum_{i=m}^{M}(\hat{\lambda}_{i}^{k+1}-\lambda_{i}^{h}).$
Then dividing the term $\sum_{i=m}^{M}(\hat{\lambda}_{i}^{k+1}-\lambda_{i}^{h})$ into three terms and using Lemma \ref{Lemma_gamma_max} and Lemma \ref{Lemma_estimate_lam_kh}, we obtain
\begin{align*}
\sum_{i=m}^{M}(\hat{\lambda}_{i}^{k+1}-\lambda_{i}^{h})
&=\sum_{i=m}^{M}(\hat{\lambda}_{i}^{k+1}-Rq(\widetilde{u}_{i}^{k+1}))
+\sum_{i=m}^{M}(Rq(\widetilde{u}_{i}^{k+1})-Rq(u_{i,J}^{k}))\\
&\ \ \ \ +\sum_{i=m}^{M}(Rq(u_{i,J}^{k})-\lambda_{i}^{h})
\leq  (\gamma_{\max}+CH)\sum_{i=m}^{M}\eta_{i}^{k}+CH^{2}\sum_{i=m}^{M}||g_{i}^{k}||_{a}^{2},
\end{align*}
where the estimate result $\sum_{i=m}^{M}(Rq(u_{i,J}^{k})-\lambda_{i}^{h})\leq CH\sum_{i=m}^{M}||g_{i}^{k}||_{a}^{2}$ holds 
by using Lemma \ref{Lemma_estimate_UkUJ} and the same technique as in the proof of Lemma 2 in \cite{MR2214752}.
Moreover, by Lemma \ref{Lemma_estimate_gik} and \eqref{check_eig_inequ}, we deduce
\begin{align*}
\sum_{i=m}^{M}(\lambda_{i}^{k+1}-\lambda_{i}^{h})
&\leq (\gamma_{\max}+CH)\sum_{i=m}^{M}\eta_{i}^{k}+CH\sum_{i=m}^{M}||g_{i}^{k}||_{a}^{2}\\
&\leq \widetilde{\gamma}_{\max}\sum_{i=m}^{M}(\lambda_{i}^{k}-\lambda_{i}^{h})+CH\sum_{i=m}^{M}(\lambda_{i}^{k}-\check{\lambda}_{i}^{k+1})\\
&\leq (\widetilde{\gamma}_{\max}+CH)\sum_{i=m}^{M}(\lambda_{i}^{k}-\lambda_{i}^{h})+CH\sum_{i=m}^{M}(\lambda_{i}^{h}-\lambda_{i}^{k+1}),
\end{align*}
where $\widetilde{\gamma}_{\max}=\gamma_{\max}+CH<1$ for sufficiently small $H$. Therefore, $$\sum_{i=m}^{M}(\lambda_{i}^{k+1}-\lambda_{i}^{h})\leq \frac{\widetilde{\gamma}_{\max}+CH}{1+CH}\sum_{i=m}^{M}(\lambda_{i}^{k}-\lambda_{i}^{h})=: c(H)\rho(\frac{\delta}{H},d_{m}^{-},d_{M}^{+})\sum_{i=m}^{M}(\lambda_{i}^{k}-\lambda_{i}^{h}),$$
which means that \eqref{Equation_Theorem eigenvalues} holds. Combining  \eqref{align_estimate_theta_a} and \eqref{Equation_Theorem eigenvalues}, we obtain \eqref{Equation_Theorem eigenvector_a}. \hfill \proofbox

\section{ Numerical Experiments}\label{sec5}
\par In this section, we present several numerical experiments to support our theoretical findings. For the stopping criterion, we choose the accuracy of
$ stop.=\sqrt{\sum_{i=m}^{M}||r_{i}^{k}||^{2}_{b}}<tol=10^{-8}.$
\begin{example}
We consider to use the $P_{1}$-conforming finite element on uniform mesh to approximate the Laplacian eigenvalue problems in $\Omega=(0,\pi)^{2}$ with homogeneous Dirichlet boundary condition on $\partial\Omega$. Set the coarse mesh $H=\frac{\sqrt{2}\pi}{2^{5}}$. Now, we fix the ratio $\frac{\delta}{H}=\frac{1}{4}$ and test optimality and scalability the two-level additive Schwarz method.
\end{example}

\par Firstly, we compute the interior multiple eigenvalues from the 99th to the 108th: $\lambda_{99}=\lambda_{100}=\lambda_{101}=\lambda_{102}=145$, $\lambda_{103}=\lambda_{104}=146$, $\lambda_{105}=\lambda_{106}=148$ and
$\lambda_{107}=\lambda_{108}=149$. The initial eigenvalue approximations of the two-level additive Schwarz method on the initial mesh $\mathcal{T}_{\widetilde{H}}$ with $\widetilde{H}=\frac{\sqrt{2}\pi}{2^{5}}$ and $d.o.f.=961$ are:
$\lambda_{99}^{\widetilde{H}} =161.042017, $    $\lambda_{100}^{\widetilde{H}}=162.829383,$ 
$\lambda_{101}^{\widetilde{H}}=162.842389,$ $\lambda_{102}^{\widetilde{H}}=166.819972,$ 
$\lambda_{103}^{\widetilde{H}}=166.865099,$    $\lambda_{104}^{\widetilde{H}}=167.540267,$ 
$\lambda_{105}^{\widetilde{H}}=169.887856,$ $\lambda_{106}^{\widetilde{H}}=170.700726,$ 
$\lambda_{107}^{\widetilde{H}}=173.459262,$ 
$\lambda_{108}^{\widetilde{H}}=174.709492.$ 
\begin{table}[H]\small
\centering
\caption{ $H=\frac{\sqrt{2}\pi}{2^{5}}$, $\frac{\delta}{H}=\frac{1}{4}$, $m=99,\ M=108$}\label{Table_ConvexOptimal1}
\newcolumntype{d}{D{.}{.}{2}}
\begin{tabular}{|c|c|c|c|c|c|c|}
\hline
\multicolumn{1}{|c|}{$h=\frac{\sqrt{2}\pi}{2^{j}}$}&\multicolumn{1}{c|}{$j=8$}
&\multicolumn{1}{c|}{$j=9$}&\multicolumn{1}{c|}{$j=10$}
&\multicolumn{1}{c|}{$j=11$}&\multicolumn{1}{c|}{$j=12$}\\
\hline
\multicolumn{1}{|c|}{$d.o.f.$}&\multicolumn{1}{c|}{$65025$}
&\multicolumn{1}{c|}{$261121$}&\multicolumn{1}{c|}{$1046529$}
&\multicolumn{1}{c|}{$4190209$}&\multicolumn{1}{c|}{$16769025$}\\
\hline
\multicolumn{1}{|c|}{$\lambda_{i}$}&\multicolumn{1}{c|}{$36(it.)$}
&\multicolumn{1}{c|}{$35(it.)$}&\multicolumn{1}{c|}{$34(it.)$}
&\multicolumn{1}{c|}{$32(it.)$}&\multicolumn{1}{c|}{$34(it.)$}\\
\hline
$\lambda_{99}=145$      &145.267540  &145.066850    &145.016710  &145.004177   &145.001076  \\\hline
$\lambda_{100}=145$     &145.267710  &145.066890    &145.016720  &145.004180   &145.001179  \\\hline
$\lambda_{101}=145$     &145.285465  &145.071880    &145.018002  &145.004502   &145.001467  \\\hline
$\lambda_{102}=145$     &145.497479  &145.124812    &145.031230  &145.007809   &145.001968  \\\hline
$\lambda_{103}=146$     &146.343640  &146.085872    &146.021466  &146.005366   &146.001386  \\\hline
$\lambda_{104}=146$     &146.343692  &146.085876    &146.021466  &146.005366   &146.001387  \\\hline
$\lambda_{105}=148$     &148.289692  &148.072359    &148.018086  &148.004521   &148.001158  \\\hline
$\lambda_{106}=148$     &148.289716  &148.072360    &148.018086  &148.004521   &148.001160  \\\hline
$\lambda_{107}=149$     &149.391777  &149.097649    &149.024393  &149.006097   &149.001578  \\\hline
$\lambda_{108}=149$     &149.414814  &149.103366    &149.025821  &149.006454   &149.001585  \\\hline
$stop.$                 &8.6089e-09    &9.6646e-09      &7.2879e-09    &9.8316e-09     &5.4729e-09    \\\hline
\end{tabular}
\end{table}
\begin{table}[H]\small
 \centering
 \caption{$H=\frac{\sqrt{2}\pi}{2^{5}},\frac{\sqrt{2}\pi}{2^{6}},\frac{\sqrt{2}\pi}{2^{7}}$, $\frac{\delta}{H}=\frac{1}{4}$, $m=99,\ M=108$}\label{Table_ConvexScalable1}
\newcolumntype{d}{D{.}{.}{2}}
\begin{tabular}{|c|c|c|c|}
\hline
$H$&\multicolumn{1}{c|}{$d.o.f.$}&\multicolumn{1}{c|}{$stop.$}&\multicolumn{1}{c|}{$it.$}\\
\hline
$\frac{\sqrt{2}\pi}{2^{5}}$&16769025&5.4729e-09&34\\
\hline
$\frac{\sqrt{2}\pi}{2^{6}}$&16769025&3.7823e-09&29\\
\hline
$\frac{\sqrt{2}\pi}{2^{7}}$&16769025&7.2187e-09&26\\
\hline
\end{tabular}
\end{table}
\par Before we analyze the numerical results, we introduce some notations: $d.o.f.$ denotes the degree of freedoms of $V^{h}$, and $it.$ represents the number of iteration of the proposed method. Numerical results in Table  \ref{Table_ConvexOptimal1} in each row from $\lambda_{99}$ to $\lambda_{108}$ shows that the iterative approximations $\lambda_{i}^{it.}$ of targeted eigenvalues converge to the discrete eigenvalues $\lambda_{i}^{h},\ i=99,100,...,108$, respectively.  From table \ref{Table_ConvexOptimal1}, the number of iteration keeps nearly stable when $d.o.f.\ \to \infty$, i.e., $h\to 0$, which illustrates that the convergence factor of the two-level additive Schwarz method is optimal. It is shown in table \ref{Table_ConvexScalable1} that the number of iteration does not increase when $H\ \to 0$, which shows that the two-level additive Schwarz method is scalable.

\par In order to present that our two-level additive Schwarz method is optimal and scalable to solve more interior eigenvalues with corresponding eigenfunctions, we compute the interior multiple eigenvalues from the 198th to the 203th: $\lambda_{198}=\lambda_{199}=272$, $\lambda_{200}=\lambda_{201}=274$ and $\lambda_{202}=\lambda_{203}=277$. The initial eigenvalue approximations of the two-level additive Schwarz method on the initial mesh $\mathcal{T}_{\widetilde{H}}$ with $\widetilde{H}=\frac{\sqrt{2}\pi}{2^{5}}$ and $d.o.f.=961$ are:
$\lambda_{198}^{\widetilde{H}}=342.963070,$ $\lambda_{199}^{\widetilde{H}}=351.359467,$
$\lambda_{200}^{\widetilde{H}}=352.358152,$ $\lambda_{201}^{\widetilde{H}}=358.045139,$
$\lambda_{202}^{\widetilde{H}}=358.442449,$ $\lambda_{203}^{\widetilde{H}}=359.752727.$

\begin{table}[H]\small
 \centering
 \caption{ $H=\frac{\sqrt{2}\pi}{2^{5}}$, $\frac{\delta}{H}=\frac{1}{4}$, $m=198,\ M=203$}\label{Table_ConvexOptimal2}
\newcolumntype{d}{D{.}{.}{2}}
\begin{tabular}{|c|c|c|c|c|c|c|}
\hline
\multicolumn{1}{|c|}{$h=\frac{\sqrt{2}\pi}{2^{j}}$}&\multicolumn{1}{c|}{$j=8$}
&\multicolumn{1}{c|}{$j=9$}&\multicolumn{1}{c|}{$j=10$}
&\multicolumn{1}{c|}{$j=11$}&\multicolumn{1}{c|}{$j=12$}\\
\hline
\multicolumn{1}{|c|}{$d.o.f.$}&\multicolumn{1}{c|}{$65025$}
&\multicolumn{1}{c|}{$261121$}&\multicolumn{1}{c|}{$1046529$}
&\multicolumn{1}{c|}{$4190209$}&\multicolumn{1}{c|}{$16769025$}\\
\hline
\multicolumn{1}{|c|}{$\lambda_{i}$}&\multicolumn{1}{c|}{$64(it.)$}
&\multicolumn{1}{c|}{$64(it.)$}&\multicolumn{1}{c|}{$63(it.)$}
&\multicolumn{1}{c|}{$61(it.)$}&\multicolumn{1}{c|}{$62(it.)$}\\
\hline
$\lambda_{198}=272$     &273.030189    &272.257745   &272.064451  &272.016114   &272.004137  \\\hline
$\lambda_{199}=272$     &273.030378    &272.257757   &272.064452  &272.016114   &272.004298  \\\hline
$\lambda_{200}=274$     &275.223475    &274.305023   &274.076201  &274.019047   &274.004743  \\\hline
$\lambda_{201}=274$     &275.223917    &274.305051   &274.076203  &274.019048   &274.004748  \\\hline
$\lambda_{202}=277$     &278.354888    &277.337414   &277.084271  &277.021063   &277.005288  \\\hline
$\lambda_{203}=277$     &278.382358    &277.344192   &277.085963  &277.021486   &277.005298  \\\hline
$stop.$                 &9.3632e-09    &7.5421e-09    &8.0610e-09    &7.2423e-09     &9.0264e-09    \\\hline
\end{tabular}
\end{table}

\begin{table}[H]\small
 \centering
 \caption{$H=\frac{\sqrt{2}\pi}{2^{5}},\frac{\sqrt{2}\pi}{2^{6}},\frac{\sqrt{2}\pi}{2^{7}}$, $\frac{\delta}{H}=\frac{1}{4}$, $m=198,\ M=203$}\label{Table_ConvexScalable2}
\newcolumntype{d}{D{.}{.}{2}}
\begin{tabular}{|c|c|c|c|}
\hline
$H$&\multicolumn{1}{c|}{$d.o.f.$}&\multicolumn{1}{c|}{$stop.$}&\multicolumn{1}{c|}{$it.$}\\
\hline
$\frac{\sqrt{2}\pi}{2^{5}}$&16769025&9.0264e-09&62\\
\hline
$\frac{\sqrt{2}\pi}{2^{6}}$&16769025&4.3782e-09&55\\
\hline
$\frac{\sqrt{2}\pi}{2^{7}}$&16769025&6.3921e-09&49\\
\hline
\end{tabular}
\end{table}

\par Similarly, from tables \ref{Table_ConvexOptimal2} and \ref{Table_ConvexScalable2}, the two-level additive Schwarz method for computing interior eigenvalues $\{\lambda_{i}\}_{i=198}^{203}$ is optimal and scalable. Finally, we show another group of numerical results for the Laplacian eigenvalue problems. We compute the interior multiple eigenvalues from the 499th to the 502th: $\lambda_{499}=\lambda_{500}=673$ and $\lambda_{501}=\lambda_{502}=674$. The initial eigenvalue approximations of the proposed method on the initial mesh $\mathcal{T}_{\widetilde{H}}$ with $\widetilde{H}=\frac{\sqrt{2}\pi}{2^{6}}$ and $d.o.f.=3969$ are
$\lambda_{499}^{\widetilde{H}}=778.376460,$
$\lambda_{500}^{\widetilde{H}}=780.027544,$
$\lambda_{501}^{\widetilde{H}}=780.041908,$  $\lambda_{502}^{\widetilde{H}}=780.595760.$

\begin{table}[H]\small
 \centering
 \caption{ $H=\frac{\sqrt{2}\pi}{2^{6}}$, $\frac{\delta}{H}=\frac{1}{4}$, $m=499,\ M=502$}\label{Table_ConvexOptimal3}
\newcolumntype{d}{D{.}{.}{2}}
\begin{tabular}{|c|c|c|c|c|c|}
\hline
\multicolumn{1}{|c|}{$h=\frac{\sqrt{2}\pi}{2^{j}}$}&\multicolumn{1}{c|}{$j=8$}&\multicolumn{1}{c|}{$j=9$}
&\multicolumn{1}{c|}{$j=10$}&\multicolumn{1}{c|}{$j=11$}&\multicolumn{1}{c|}{$j=12$}\\
\hline
\multicolumn{1}{|c|}{$d.o.f.$}&\multicolumn{1}{c|}{$65025$}&\multicolumn{1}{c|}{$261121$}
&\multicolumn{1}{c|}{$1046529$}&\multicolumn{1}{c|}{$4190209$}&\multicolumn{1}{c|}{$16769025$}\\
\hline
\multicolumn{1}{|c|}{$\lambda_{i}$}
&\multicolumn{1}{c|}{$90(it.)$}&\multicolumn{1}{c|}{$96(it.)$}&\multicolumn{1}{c|}{$93(it.)$}
&\multicolumn{1}{c|}{$93(it.)$}&\multicolumn{1}{c|}{$92(it.)$}\\
\hline
$\lambda_{499}=673$     &680.405341  &674.899653  &673.475776  &673.118289   &673.029499  \\\hline
$\lambda_{500}=673$     &680.407954  &674.906418  &673.477268  &673.118343   &673.029660  \\\hline
$\lambda_{501}=674$     &680.639711  &675.612138  &674.404020  &674.101373   &674.025125  \\\hline
$\lambda_{502}=674$     &680.669120  &675.612340  &674.404227  &674.101540   &674.025449  \\\hline
$stop.$                 &9.9989e-09    &7.5421e-09    &8.0069e-09    &4.2351e-09     &2.2291e-09 \\\hline
\end{tabular}
\end{table}

\begin{table}[H]\small
 \centering
 \caption{$H=\frac{\sqrt{2}\pi}{2^{6}},\frac{\sqrt{2}\pi}{2^{7}}$, $\frac{\delta}{H}=\frac{1}{4}$, $m=499,\ M=502$}\label{Table_ConvexScalable3}
\newcolumntype{d}{D{.}{.}{2}}
\begin{tabular}{|c|c|c|c|}
\hline
$H$&\multicolumn{1}{c|}{$d.o.f.$}&\multicolumn{1}{c|}{$stop.$}&\multicolumn{1}{c|}{$it.$}\\
\hline
$\frac{\sqrt{2}\pi}{2^{6}}$&16769025&2.2291e-09&92\\
\hline
$\frac{\sqrt{2}\pi}{2^{7}}$&16769025&3.7644e-09&80\\
\hline
\end{tabular}
\end{table}

\par Although we only give the analysis for convex case, our method works well for nonconvex case. When the computed domain is not so regular, some interior eigenvalues of the differential operators may form a eigenvalue cluster. Now, we consider to use the two-level additive Schwarz method to solve interior multiple and clustered eigenvalues of the Laplacian operator in the L-shaped domain.
\begin{example}
We consider to use the $P_{1}$-conforming finite element on uniform mesh to approximate the Laplacian eigenvalue problems in L-shaped domain $(-\pi,\pi)^{2}$ $\backslash[0,\pi)\times(-\pi,0]$ with homogeneous Dirichlet boundary condition on $\partial\Omega$. Set the coarse mesh $H=\frac{\sqrt{2}\pi}{2^{4}}$. Now, we fix the ratio $\frac{\delta}{H}=\frac{1}{4}$ and test optimality and scalability the two-level additive Schwarz method.
\end{example}

\par Firstly, we compute the interior multiple and clustered eigenvalues from the 41th to the 47th which are unknown. The initial eigenvalue approximations of the proposed method on the initial mesh $\mathcal{T}_{\widetilde{H}}$ with $\widetilde{H}=\frac{\sqrt{2}\pi}{2^{4}}$ and $d.o.f.=705$ are: $\lambda_{41}^{\widetilde{H}}=22.729142,$
$\lambda_{42}^{\widetilde{H}}=22.764697,$
$\lambda_{43}^{\widetilde{H}}=23.006578,$ $\lambda_{44}^{\widetilde{H}}=25.000928,$
$\lambda_{45}^{\widetilde{H}}=25.581274,$ $\lambda_{46}^{\widetilde{H}}=25.984862,$  $\lambda_{47}^{\widetilde{H}}=26.245603.$

\begin{table}[H]\small
 \centering
 \caption{ $H=\frac{\sqrt{2}\pi}{2^{4}}$, $\frac{\delta}{H}=\frac{1}{4}$, $m=41,\ M=47$}\label{Table_nonConvexOptimal_v1}
\newcolumntype{d}{D{.}{.}{2}}
\begin{tabular}{|c|c|c|c|c|c|c|}
\hline
\multicolumn{1}{|c|}{$h=\frac{\sqrt{2}\pi}{2^{j}}$}&\multicolumn{1}{c|}{$j=7$}
&\multicolumn{1}{c|}{$j=8$}&\multicolumn{1}{c|}{$j=9$}
&\multicolumn{1}{c|}{$j=10$}&\multicolumn{1}{c|}{$j=11$}\\
\hline
\multicolumn{1}{|c|}{$d.o.f.$}&\multicolumn{1}{c|}{$48641$}
&\multicolumn{1}{c|}{$195585$}&\multicolumn{1}{c|}{$784385$}
&\multicolumn{1}{c|}{$3141633$}&\multicolumn{1}{c|}{$12574721$}\\
\hline
\multicolumn{1}{|c|}{$\lambda_{i}$}&\multicolumn{1}{c|}{14$(it.)$}
&\multicolumn{1}{c|}{12 $(it.)$}&\multicolumn{1}{c|}{$11(it.)$}
&\multicolumn{1}{c|}{11$(it.)$}&\multicolumn{1}{c|}{11$(it.)$}\\
\hline
$\lambda_{41}$    &21.038261  &21.017903 &21.012693  &21.011380   &21.011047  \\\hline
$\lambda_{42}$    &21.176737  &21.159106 &21.154753  &21.153663   &21.153386  \\\hline
$\lambda_{43}$    &21.247400  &21.227294 &21.222114  &21.220853   &21.220541  \\\hline
$\lambda_{44}$    &22.654591  &22.626330 &22.619536  &22.617922   &22.617516  \\\hline
$\lambda_{45}$    &22.702806  &22.669829 &22.661640  &22.659588   &22.659088  \\\hline
$\lambda_{46}$    &23.959245  &23.932326 &23.925872  &23.924305   &23.923886  \\\hline
$\lambda_{47}$    &24.191689  &24.166038 &24.160099  &24.158572   &24.158197  \\\hline
$stop.$           &6.1398e-09   &7.0221e-09  &8.0057e-09   &5.0695e-09    &4.5618e-09   \\\hline
\end{tabular}
\end{table}

\begin{table}[H]\small
 \centering
 \caption{$H=\frac{\sqrt{2}\pi}{2^{4}},\frac{\sqrt{2}\pi}{2^{5}},\frac{\sqrt{2}\pi}{2^{6}}$, $\frac{\delta}{H}=\frac{1}{4}$, $m=41,\ M=47$}\label{Table_nonConvexScalable_v1}
\newcolumntype{d}{D{.}{.}{2}}
\begin{tabular}{|c|c|c|c|}
\hline
$H$&\multicolumn{1}{c|}{$d.o.f.$}&\multicolumn{1}{c|}{$stop.$}&\multicolumn{1}{c|}{$it.$}\\
\hline
$\frac{\sqrt{2}\pi}{2^{4}}$&12574721&4.5618e-09&11\\
\hline
$\frac{\sqrt{2}\pi}{2^{5}}$&12574721&2.8936e-09&9\\
\hline
$\frac{\sqrt{2}\pi}{2^{6}}$&12574721&7.3652e-09&7\\
\hline
\end{tabular}
\end{table}
\par Although some eigenfunctions have strong singularities in L-shaped domain, the proposed method works very well. From table \ref{Table_nonConvexOptimal_v1}, the number of iteration keeps stable when $h\to 0$, which shows that the convergence factor of the two-level additive Schwarz method is optimal. Moreover, the eigenvalues $\{\lambda_{i}\}_{i=41}^{47}$ are close to each other, but our method still works well, which illustrates that the convergence factor is not negatively impacted by gaps among the targeted clustered eigenvalues $\{\lambda_{i}\}_{i=41}^{47}$. Furthermore, it is seen in table \ref{Table_nonConvexScalable_v1} that the number of iterations does not increase with the number of subdomains increasing, which shows that the two-level additive Schwarz method is scalable. Next, we present another group results. We compute the interior multiple and clustered eigenvalues from the 87th to the 91th which are unknown. The initial eigenvalue approximations of the proposed method on the initial mesh $\mathcal{T}_{\widetilde{H}}$ with $\widetilde{H}=\frac{\sqrt{2}\pi}{2^{4}}$ and $d.o.f.=705$ are:
$\lambda_{87}^{\widetilde{H}}=49.512180,$ $\lambda_{88}^{\widetilde{H}}=49.645803,$
$\lambda_{89}^{\widetilde{H}}=50.985804,$ $\lambda_{90}^{\widetilde{H}}=51.112112,$
$\lambda_{91}^{\widetilde{H}}=51.484953.$

\begin{table}[H]\small
 \centering
 \caption{ $H=\frac{\sqrt{2}\pi}{2^{4}}$, $\frac{\delta}{H}=\frac{1}{4}$, $m=87,\ M=91$}\label{Table_nonConvexOptimal_v2}
\newcolumntype{d}{D{.}{.}{2}}
\begin{tabular}{|c|c|c|c|c|c|c|}
\hline
\multicolumn{1}{|c|}{$h=\frac{\sqrt{2}\pi}{2^{j}}$}&\multicolumn{1}{c|}{$j=7$}
&\multicolumn{1}{c|}{$j=8$}&\multicolumn{1}{c|}{$j=9$}
&\multicolumn{1}{c|}{$j=10$}&\multicolumn{1}{c|}{$j=11$}\\
\hline
\multicolumn{1}{|c|}{$d.o.f.$}&\multicolumn{1}{c|}{$48641$}
&\multicolumn{1}{c|}{$195585$}&\multicolumn{1}{c|}{$784385$}
&\multicolumn{1}{c|}{$3141633$}&\multicolumn{1}{c|}{$12574721$}\\
\hline
\multicolumn{1}{|c|}{$\lambda_{i}$}&\multicolumn{1}{c|}{24$(it.)$}
&\multicolumn{1}{c|}{22$(it.)$}&\multicolumn{1}{c|}{21$(it.)$}
&\multicolumn{1}{c|}{21$(it.)$}&\multicolumn{1}{c|}{21$(it.)$}\\
\hline
$\lambda_{87}$     &42.237428    &42.145053    &42.122122    &42.116513     &42.115206  \\\hline
$\lambda_{88}$     &42.644708    &42.550598    &42.528282    &42.523026     &42.521780  \\\hline
$\lambda_{89}$     &43.332635    &43.258123    &43.239463    &43.235030     &43.236199  \\\hline
$\lambda_{90}$     &43.378541    &43.299305    &43.280715    &43.276644     &43.275681  \\\hline
$\lambda_{91}$     &44.000091    &43.910995    &43.890111    &43.885182     &43.883964  \\\hline
$stop.$            &5.2694e-09   &3.9657e-09   &5.2652e-09   &5.0951e-09    &9.8916e-09   \\\hline
\end{tabular}
\end{table}

\begin{table}[H]\small
 \centering
 \caption{$H=\frac{\sqrt{2}\pi}{2^{4}},\frac{\sqrt{2}\pi}{2^{5}},\frac{\sqrt{2}\pi}{2^{6}}$, $\frac{\delta}{H}=\frac{1}{4}$, $m=87,\ M=91$}\label{Table_nonConvexScalable_v2}
\newcolumntype{d}{D{.}{.}{2}}
\begin{tabular}{|c|c|c|c|}
\hline
$H$&\multicolumn{1}{c|}{$d.o.f.$}&\multicolumn{1}{c|}{$stop.$}&\multicolumn{1}{c|}{$it.$}\\
\hline
$\frac{\sqrt{2}\pi}{2^{4}}$&12574721&9.8916e-09&21\\
\hline
$\frac{\sqrt{2}\pi}{2^{5}}$&12574721&4.3729e-09&18\\
\hline
$\frac{\sqrt{2}\pi}{2^{6}}$&12574721&8.4326e-09&16\\
\hline
\end{tabular}
\end{table}

\section{Conclusion}\label{sec6}

\par In this paper, based on the overlapping domain decomposition, we propose an two-level additive Schwarz method to compute interior multiple and clustered eigenvalues of symmetric elliptic operators. It may compute different eigenvalue clusters simultaneously and compute more interior multiple and clustered eigenvalues. The theoretical analysis reveals that the two-level additive Schwarz method is optimal, scalable and cluster robust, i.e., the convergence factor of the proposed algorithm is independent of the fine mesh size, the number of subdomains and the gaps among the interior multiple and clustered eigenvalues.

\begin{small}
\bibliographystyle{siamplain}
\bibliography{reference}
\end{small}
\end{document}